\numberwithin{equation}{section}
\newtheorem{theorem}{Theorem}[section]
\newtheorem{corollary}[theorem]{Corollary}
\newtheorem{proposition}[theorem]{Proposition}
\newtheorem{lemma}[theorem]{Lemma}
\newtheorem{remark}[theorem]{Remark}
\theoremstyle{definition}
\newtheorem{definition}[theorem]{Definition}
\newtheorem{problem}{Problem}
\theoremstyle{plain}
\newcommand{\R}{\mathbb{R}}
\def\N{\mathbb N}
\def\R{\mathbb R}
\def\emp{\emptyset}
\begin{document}

\title[Lattices of homomorphisms and pro-Lie groups]{Lattices of homomorphisms and pro-Lie groups}
\author[Arkady G.~Leiderman and Mikhail G.~Tkachenko]{Arkady G.~Leiderman and Mikhail G.~Tkachenko$\,^1$}
\address{Department of Mathematics, Ben-Gurion University of the Negev, Beer Sheva, P.O.B. 653, Israel}
\email{arkady@math.bgu.ac.il}
\address{Departamento de Matem\'aticas, Universidad Aut\'onoma Metropolitana, Av. San Rafael Atlixco 186, Col. Vicentina, Del. Iztapalapa, C.P. 09340, Ciudad de M\'exico, Mexico}
\email{mich@xanum.uam.mx}
\thanks{$^1\,$The second listed author gratefully acknowledges the financial support received from the Center for Advanced Studies in Mathematics of the Ben Gurion University of the Negev during his 
visit in October, 2015. He was also supported by the Consejo Nacional de Ciencia y Tecnolog\'{\i}a (CONACyT) of Mexico, grant number 265992 (Estancias Sab\'aticas en el Extranjero).}

\keywords{Almost connected group, pro-Lie group, $\R$-factorizable group, 
Lindel\"of $\Sigma$-group, weak (strong) $\sigma$-lattice}
\subjclass[2010]{Primary 54H11, 22A05; Secondary 54C10, 54D60}

\date{\today}

\begin{abstract}
Early this century K.\,H.~Hofmann and S.\,A.~Morris introduced the class of \textit{pro-Lie groups} which consists of projective limits of finite-dimensional Lie groups and proved that it contains all compact groups, all locally compact abelian groups, and all connected locally compact groups and is closed under the formation of products and closed subgroups. They defined a topological group $G$ to be \textit{almost connected} if the quotient group of $G$ by the connected component of its identity is compact.

We show here that all almost connected pro-Lie groups as well as their continuous homomorphic images are $\R$-factorizable and \textit{$\omega$-cellular}, i.e.~every family of $G_\delta$-sets contains a countable subfamily whose union is dense in the union of the whole family.
We also prove a general result which implies as a special case that if a topological group $G$ contains a compact invariant subgroup $K$ such that the quotient group $G/K$ is an almost connected pro-Lie group, then $G$ is $\R$-factorizable and $\omega$-cellular.

Applying the aforementioned result we show that the sequential closure and the closure of an 
arbitrary $G_{\delta,\Sigma}$-set in an almost connected pro-Lie group $H$ coincide.
\end{abstract}

\maketitle

\section{Introduction}
A topological group is called a \emph{pro-Lie group} \cite{PROBOOK, HM2,HM3} if it is 
a projective limit of finite-dimensional Lie groups. As shown in \cite{PROBOOK} the class of  
pro-Lie groups includes all locally compact abelian topological groups, all compact groups, 
all connected locally compact topological groups, and all almost connected locally compact 
topological groups. Further, the class of pro-Lie groups is productive and every closed 
subgroup of a pro-Lie group is again a pro-Lie group.

Our main objective is to study the topological properties of \emph{almost connected} 
pro-Lie groups, i.e.~the pro-Lie groups $G$ such that the quotient group $G/G_0$ is compact, 
where $G_0$ is the connected component of $G$. The following theorem about the topological structure of almost connected pro-Lie groups plays a key role throughout the paper. According 
to \cite[Corollary~8.9]{HM2}, an almost connected pro-Lie group is \emph{homeomorphic} to 
the product $\R^\kappa\times K$, where $\R$ is the real line with the usual topology, $\kappa$ 
is a cardinal, and $K$ is a compact topological group.

A topological group $G$ is said to be \emph{$\R$-factorizable} if, for any continuous function 
$f\colon G\to \R$, one can find a second-countable topological group $S$, a continuous homomorphism $\pi\colon G\to S$, and a continuous function $h\colon S\to \R$ for which 
$f=h\circ \pi$.

Evidently, every second-countable group is $\R$-factorizable. In fact, an arbitrary topological 
product of second-countable topological groups is $\R$-factorizable \cite[Corollary~8.1.15]{AT}. 
A theorem of Pontryagin, reformulated in modern terms, asserts that every compact 
topological group is $\R$-factorizable (see \cite[Section~8.1]{AT}).

$\R$-factorizable groups were introduced in \cite{Tk91}. The class of $\R$-factorizable 
groups is sufficiently large\,---\,it contains arbitrary subgroups of Lindel\"of $\Sigma$-groups, 
direct products of Lindel\"of $\Sigma$-groups, and their dense subgroups (see 
\cite[Section~8.1]{AT}). In particular, every subgroup of a $\sigma$-compact group is 
$\R$-factorizable. Nevertheless, several major questions are still open. It is not known 
whether the class of $\R$-factorizable groups is closed under taking direct products, or 
whether this class is closed under passing to continuous homomorphic images.

Delimiting the frontiers of the class of $\R$-factorizable groups is another important 
part of research. It is known that \emph{$\omega$-narrow} topological groups can
fail to be $\R$-factorizable \cite[Example~8.2.1]{AT}. Recently, E.~Reznichenko and 
O.~Sipacheva \cite{RS} obtained a considerably stronger result. They showed that 
there exists a separable (and hence $ccc$ and $\omega$-narrow) topological group 
which is not $\R$-factorizable.

Thus, the question about $\R$-factorizability of almost connected pro-Lie groups and 
close to them topological groups arises quite naturally.

Section~\ref{R_pro-Lie} of the article is devoted to the study of continuous homomorphic 
images of the almost connected pro-Lie groups. It is shown in Theorem~\ref{Th:1} that every continuous homomorphic image $H$ of an almost connected pro-Lie group, and the 
Hewitt-Nachbin completion of $H$, are both $\R$-factorizable and $\omega$-cellular. Further, Theorem~\ref{Th:1y} states that the group $H$ is an Efimov space, and that the Hewitt-Nachbin completion of $H$ remains to be Efimov spaces as well. All statements mentioned above fail 
to hold for general pro-Lie groups, without the assumption of almost connectedness (see 
Remark~\ref{Rem:1}). To show that almost connected pro-Lie groups have the Efimov 
property we make use of the notion of \emph{weak $\sigma$-lattice} introduced in \cite{Tk94}.

In Section~\ref{Strong_lattices} we further develop a technique of strong $\sigma$-lattices 
of continuous mappings (or homomorphisms) which plays an important role in the subsequent Sections~\ref{Ext_pro-Lie} and~\ref{Sec:CS}. Several of our results in Section~\ref{Strong_lattices} have value in their own right. For example, we show in Theorem~\ref{Th:Ext} that if $K$ is a 
compact invariant subgroup of a topological group $G$ and the quotient group $G/K$ has a 
strong $\sigma$-lattice of open homomorphisms onto topological groups with a countable 
network (countable base), then $G$ also has such a lattice of open homomorphisms.

In Section~\ref{Ext_pro-Lie} we consider several topological properties of 
extensions of the almost connected pro-Lie groups. With the help of results of 
Section~\ref{Strong_lattices} we extend the conclusions of Theorems~\ref{Th:1} 
and~\ref{Th:1y} to topological groups $H$ which contain a compact invariant subgroup 
$K$ such that the quotient group $H/K$ is \emph{homeomorphic} to the product 
$C\times\prod_{i\in I} H_i$, where $C$ is a compact group and each $H_i$ is
a topological group with a countable network (see Theorem~\ref{Th:2} ).

Convergence properties of almost connected pro-Lie groups are considered in 
Section~\ref{Sec:CS}. We prove in Theorem~\ref{Th:4} that if $H$ is an almost
connected pro-Lie group and $P$ is a union of $G_\delta$-sets in $H$, then the
closure of $P$ and the sequential closure of $P$ in $H$ coincide. In other words,
for every $x\in\overline{P}$, the set $P$ contains a sequence converging to $x$.

\subsection{Notation and terminology}
A topological group $G$ is \textit{$\omega$-narrow} if it can be covered by countably 
many translates of an arbitrary neighborhood of the identity. 

A topological space $X$ is said to be \textit{$\omega$-cellular} or, in symbols, 
$cel_\omega(X)\leq\omega$ if every family $\gamma$ of $G_\delta$-sets in $X$ 
contains a countable subfamily $\lambda$ such that $\bigcup\lambda$ is dense in 
$\bigcup\gamma$. It is clear that every $\omega$-cellular space has countable 
cellularity. The class of $\omega$-cellular spaces is considerably narrower than 
the class of spaces of countable cellularity\,---\,a space $X$ of countable pseudocharacter 
satisfies $cel_\omega(X)\leq\omega$ if and only if it is hereditarily separable.

The union of a family of $G_\delta$-sets in a space $X$ is said to be a $G_{\delta,\Sigma}$-set.
We also recall that a topological space $X$ is an \emph{Efimov} space if the closure 
in $X$ of every $G_{\delta,\Sigma}$-set is a $G_\delta$-set. A subset $Z$ of $X$ is a 
\emph{zero-set} if there exists a continuous real-valued function $f$ on $X$ such that 
$Z=f^{-1}(0)$. Clearly every closed $G_\delta$-set in a normal space is a zero-set.

The Hewitt--Nachbin completion (or \emph{realcompactification}) of a Tychonoff space $X$, 
denoted by $\upsilon{X}$, is a realcompact space containing a dense homeomorphic copy 
of $X$, such that $X$ is $C$-embedded in $\upsilon{X}$ (see \cite[Section~3.11]{Eng89}).

A Hausdorff space $X$ is a \emph{Lindel\"of $\Sigma$-space} if $X$ is a continuous image of a Hausdorff space $Y$ which admits a perfect mapping onto a separable metrizable space (for basic properties of Lindel\"of $\Sigma$-spaces see \cite[Section~5.3]{AT}).

A topological space $X$ is \emph{weakly Lindel\"of} if every open cover of $X$ contains a countable subfamily which covers a dense subset of $X$.  

A topological space $X$ has a \emph{$G_\delta$-diagonal} if the diagonal $\Delta_X=\{(x,x): x\in X\}$ is a $G_\delta$-set in $X^2$. As usual we say that a space $X$ is \emph{submetrizable} if it admits a coarser metrizable topology. It is clear that every submetrizable space $X$ is Hausdorff and has a \emph{regular} $G_\delta$-diagonal, i.e.~the diagonal $\Delta_X$ in $X^2$ is the intersection of countably many of its closed neighborhoods.

All topological spaces and topological groups are assumed to be Hausdorff.

\section{Continuous homomorphic images of pro-Lie groups}\label{R_pro-Lie}
In this section we study the properties of continuous homomorphic images of \emph{almost connected} pro-Lie groups. It is worth mentioning that even quotient groups of pro-Lie 
groups may fail to be pro-Lie groups \cite[Corollary 4.11]{PROBOOK}

Let us recall that a \emph{paratopological} group is a group with a topology such that
multiplication on the group is continuous (but inversion can be discontinuous). The reader
can consult \cite[Section~2.3]{AT} for information about paratopological groups. We use 
this concept only in the proof of the next theorem. 

\begin{theorem}\label{Th:1}
Let a topological group $H$ be a continuous homomorphic image of an almost connected pro-Lie group $G$. Then the following hold:
\begin{enumerate}
\item[{\rm (a)}] The group $H$ is $\R$-factorizable and $\omega$-cellular.
\item[{\rm (b)}] The Hewitt--Nachbin completion of $H$, $\upsilon{H}$, is again an $\R$-factorizable and $\omega$-cellular topological group containing $H$ as a topological subgroup.
\end{enumerate} 
\end{theorem}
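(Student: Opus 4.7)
\emph{Proof plan.} The starting point is the structure theorem of Hofmann and Morris cited in the introduction: $G$ is homeomorphic to $\R^\kappa\times K$ for some cardinal $\kappa$ and some compact topological group $K$. The plan is to establish the two asserted properties first for $G$ itself, then transfer them across the continuous surjective homomorphism $\phi\colon G\to H$, and finally derive part~(b) from the general Hewitt--Nachbin theory for $\R$-factorizable groups.

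For $G$ itself, $\omega$-cellularity is a purely topological property and follows from the homeomorphism together with the standard fact that any Cartesian product of Lindel\"of $\Sigma$-spaces (such as $\R^\kappa\times K$) is $\omega$-cellular. For $\R$-factorizability, I exploit the pro-Lie structure: $G$ embeds topologically as a closed subgroup of a product $\prod_i L_i$ of finite-dimensional Lie groups, which in the almost connected case can be chosen to have finitely many components, hence to be second-countable. Since closed subgroups of products of second-countable topological groups are $\R$-factorizable \cite[Sec.~8.1]{AT}, the property holds for $G$. Transferring $\omega$-cellularity to $H$ is routine: for a family $\{U_\alpha\}$ of $G_\delta$-sets in $H$, the preimages $\phi^{-1}(U_\alpha)$ form a family of $G_\delta$-sets in $G$, a countable subfamily with dense union exists by $\omega$-cellularity of $G$, and surjectivity of $\phi$ carries the density over to $H$.

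The main obstacle is the transfer of $\R$-factorizability across $\phi$, because this class is not known to be closed under continuous homomorphic images. The strategy: given continuous $f\colon H\to\R$, factor $f\circ\phi\colon G\to\R$ by $\R$-factorizability of $G$ as $g\circ p$, with $p\colon G\to S$ a continuous homomorphism onto a second-countable group $S$ and $g\colon S\to\R$ continuous. Form $L=\overline{p(\ker\phi)}$, a closed normal subgroup of $S$ (normality uses surjectivity of $p$ together with normality of $\ker\phi$ in $G$); then $T=S/L$ remains second-countable. The composition $G\to S\to T$ annihilates $\ker\phi$ and so descends to an algebraic homomorphism $q\colon H\to T$ through which $f$ factors as $h\circ q$ for an induced continuous $h\colon T\to\R$. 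The delicate point is the \emph{continuity} of $q$, which I would secure by choosing $p$ to factor through a canonical open projection from $G$ onto a second-countable Lie quotient coming from the pro-Lie structure of $G$, so that the requisite openness propagates along $\phi$ and makes the induced $q$ continuous.

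For part~(b), with~(a) in hand, I would appeal to the standard Hewitt--Nachbin theory for $\R$-factorizable topological groups developed in \cite{AT}: since $H$ is $\omega$-narrow (as every $\R$-factorizable group is) and $\omega$-cellular by~(a), the completion $\upsilon H$ carries a canonical topological group structure in which $H$ sits as a topological subgroup, and both $\R$-factorizability and $\omega$-cellularity extend from $H$ to $\upsilon H$ under this extension.
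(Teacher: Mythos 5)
Your reduction to $G\cong\R^\kappa\times K$, the treatment of $G$ itself (both via the product structure for $\omega$-cellularity and via the embedding into a product of second-countable Lie groups for $\R$-factorizability), and the transfer of $\omega$-cellularity to $H$ are all sound. The genuine gap is exactly at the point you flag as "delicate": the continuity of the induced homomorphism $q\colon H\to T=S/L$. Your construction yields a well-defined algebraic homomorphism $q$ with $q\circ\phi=\pi_L\circ p$, and for an open $V\subseteq T$ you can only conclude that $\phi^{-1}(q^{-1}(V))=p^{-1}(\pi_L^{-1}(V))$ is open in $G$. Since $\phi$ is merely a continuous surjective homomorphism --- not assumed open or quotient, and in general it is neither --- openness of a $\phi$-saturated preimage says nothing about openness of the corresponding set in $H$. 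Choosing $p$ to be an open projection onto a Lie quotient of $G$ does not help, because the obstruction sits in $\phi$, not in $p$; "openness propagating along $\phi$" is precisely the open problem, stated in the introduction of this paper, of whether $\R$-factorizability is preserved by continuous homomorphic images. So the central claim of part (a) is not established by your argument.

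The paper's proof sidesteps this entirely: it observes that $H$ is a continuous image, \emph{as a topological space}, of the product $\R^\kappa\times K$ of Lindel\"of $\Sigma$-spaces and invokes Theorems~12 and~13 of \cite{Tk15}, which assert that any topological group that is a continuous image of such a product is $\R$-factorizable and $\omega$-cellular, and that its Hewitt--Nachbin completion is an $\R$-factorizable paratopological group. The mechanism behind those theorems (echoed in the lattice machinery of Section~\ref{Strong_lattices}) is that the factorization of a function $f\colon H\to\R$ is carried out through the weak $\sigma$-lattice of quotients $H\to H/N$ by admissible subgroups $N$ of $H$ itself, matched against the lattice of countable subproducts of $\R^\kappa\times K$; no homomorphism is ever pushed forward along $\phi$. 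A repair of your argument would need an ingredient of this kind. Two smaller points on (b): the upgrade of $\upsilon H$ from paratopological group to topological group uses the density of $H$ together with \cite[Proposition~2.2]{San_inf}, and the $\omega$-cellularity of $\upsilon H$ requires the $G_\delta$-density of $H$ in $\upsilon H$; both are routine once (a) is in hand, but your appeal to "standard theory" should be made explicit there.
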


\begin{proof}
According to \cite[Corollary~8.9]{HM2}, the group $G$ is \emph{homeomorphic} to the product $\R^\kappa\times K$, where $\R$ is the real line with the usual topology, $\kappa$ is a cardinal, and $K$ is a compact topological group. Hence $H$ is a continuous image of the topological product $\R^\kappa\times K$, where the factors $\R$ (repeated $\kappa$ times) and $K$ are clearly Lindel\"of $\Sigma$-spaces. Therefore, combining Theorems~12 and~13 of \cite{Tk15} we conclude that the group $H$ is $\R$-factorizable and $\omega$-cellular and that the Hewitt--Nachbin completion of $H$ is an $\R$-factorizable paratopological group containing $H$ as a dense subgroup. Since $H$ is dense in $\upsilon{H}$, it follows from \cite[Proposition~2.2]{San_inf} that $\upsilon{H}$ is in fact a topological group. Thus $\upsilon{H}$ is an $\R$-factorizable topological group. 

It is easy to see that the space $\upsilon{H}$ is $\omega$-cellular. Indeed, let $\gamma$ be a family of $G_\delta$-sets in $\upsilon{H}$. Since $H$ is $G_\delta$-dense in $\upsilon{H}$, the set $P\cap H$ is dense in $P$, for each $P\in\gamma$. Hence there exists a countable subfamily $\lambda$ of $\gamma$ such that $H\cap\bigcup\lambda$ is dense in $H\cap\bigcup\gamma$. Then $\bigcup\lambda$ is dense in $\bigcup\gamma$, i.e.~the space $\upsilon{H}$ is $\omega$-cellular.
\end{proof}

\begin{corollary}\label{Cor:AlC}
Every almost connected pro-Lie group $H$ is $\R$-factorizable and $\omega$-cellular.
\end{corollary}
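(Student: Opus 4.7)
The plan is to observe that Corollary~\ref{Cor:AlC} is an immediate specialization of Theorem~\ref{Th:1}. Indeed, an almost connected pro-Lie group $H$ is trivially a continuous homomorphic image of itself via the identity map $\mathrm{id}_H\colon H\to H$, which is a continuous group homomorphism from an almost connected pro-Lie group onto $H$. Hence Theorem~\ref{Th:1}(a), applied with $G=H$, immediately yields that $H$ is $\R$-factorizable and $\omega$-cellular.

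Therefore no real work is needed: the only step is to invoke the hypotheses of Theorem~\ref{Th:1} with the identity homomorphism and read off conclusion~(a). There is no obstacle, since by assumption $H$ itself already satisfies the structural hypothesis on $G$ in Theorem~\ref{Th:1}. One could, if desired, also record the parallel consequence from part~(b), namely that $\upsilon H$ is an $\R$-factorizable and $\omega$-cellular topological group containing $H$ as a topological subgroup, but this is not asserted in the corollary and so would be an optional add-on remark rather than part of the proof.
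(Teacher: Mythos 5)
Your proof is correct and coincides with the paper's intent: the corollary is stated without proof precisely because it is the specialization of Theorem~\ref{Th:1}(a) to $G=H$ with the identity homomorphism. Nothing further is needed.
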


\begin{remark}\label{Rem:1}
{\rm One cannot drop \lq\lq{almost connected\rq\rq} in Theorem~\ref{Th:1}. Indeed, there exists a
prodiscrete group (that is, a complete group with a base of open subgroups) topologically isomorphic 
to a closed subgroup of the product of countable discrete Abelian groups which fails to be
$\R$-factorizable --- the group $H$ in \cite[Theorem~2.4]{Tk01} is as required. In particular, $H$ is 
an $\omega$-narrow pro-Lie group. Further, every $G_\delta$-set in $H$ is open, so $H$ is an uncountable \emph{$P$-group.} Hence $H$ is not $\omega$-cellular. In other words, all the conclusions of Theorem~\ref{Th:1} are false for $\omega$-narrow pro-Lie groups.}
\end{remark}

Our next aim in this section is to show that a continuous homomorphic image of an almost 
connected pro-Lie group is an Efimov space. For this purpose we make use the notion of a \emph{weak $\sigma$-lattice} and its modifications. 

Let us start with a definition of a partial preorder relation on continuous mappings of a given space. 

\begin{definition}\label{Def:1}
Let $f\colon X\to Y$ and $g\colon X\to Z$ be continuous onto mappings of topological spaces. 
We write $f\prec g$ if there exists a continuous mapping $h\colon Y\to Z$ satisfying $g=h\circ f$.
\[
\xymatrix{X\ar@{>}[r]^{f}\ar@{>}[dr]_{g} & Y
\ar@{>}[d]^{h}\\
&  Z }
\]
\end{definition}

Consider a family $\mathcal{L}$ of continuous mappings of a space $X$ elsewhere. We say that 
$\mathcal{L}$ is a \emph{weak $\sigma$-lattice} for $X$ if the following conditions are fulfilled:
\begin{enumerate}
\item[(1)] $\mathcal{L}$ generates the original topology of $X$, i.e.~$\mathcal{L}$ separates points and closed subsets of $X$;
\item[(2)] every finite subfamily of $\mathcal{L}$ has a lower bound in $(\mathcal{L},\prec);$
\item[(3)] for every decreasing sequence $p_0\succ p_1\succ p_2\succ \cdots$ in $\mathcal{L}$, there exists $p\in\mathcal{L}$ and a continuous one-to-one mapping $\phi\colon p(X)\to q(X)$ such that $q=\phi\circ{p}$, where $q$ is the diagonal product of the family $\{p_n: n\in\omega\}$.
\end{enumerate}
To visualize the notion of a weak $\sigma$-lattice, one can take a topological group $H$ and consider the family of all quotient mappings $\pi_N\colon H\to H/N$ onto left coset spaces $H/N$, where $N$ is an arbitrary closed subgroup of type $G_\delta$ in $H$. 

We will call a subgroup $N$ of a topological group $H$ \emph{admissible} (see \cite[Section~5.5]{AT})
if there exists a sequence $\{U_n: n\in\omega\}$ of open symmetric neighborhoods of the identity in $H$ such that $U_{n+1}^3\subset U_n$ for each $n\in\omega$ and $N=\bigcap_{n\in\omega} U_n$. It is easy to see that every admissible subgroup of $H$ is closed and the intersection of countably many admissible subgroups of $H$ is again an admissible  subgroup of $H$. 

\begin{lemma}\label{Le:1}
Let $H$ be a topological group and $\mathcal{N}$ the family of admissible subgroups of $H$. For every
$N\in\mathcal{N}$, let $\pi_N\colon H\to H/N$ be the quotient mapping onto the corresponding left coset space. Then the family $\mathcal{L}=\{\pi_N: N\in\mathcal{N}\}$ is a weak $\sigma$-lattice for $H$ which consists of open mappings onto submetrizable spaces. In particular, $H/N$ has a regular $G_\delta$-diagonal for each $N\in\mathcal{N}$.
\end{lemma}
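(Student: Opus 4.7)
The plan is to verify in turn the three axioms of a weak $\sigma$-lattice for $\mathcal{L}$, and then to show each $\pi_N$ is open with submetrizable range. Openness of $\pi_N$ is standard in any topological group: $\pi_N^{-1}(\pi_N(U))=UN$ is open for every open $U\subseteq H$.

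For condition (1), given $x\in H$ and an open neighborhood $W$ of $x$, I choose a symmetric open $U\ni e$ with $xU\subseteq W$ and then build a sequence $U=U_0\supseteq U_1\supseteq U_2\supseteq\cdots$ of symmetric open neighborhoods of the identity with $U_{n+1}^3\subseteq U_n$. Setting $N=\bigcap_n U_n\in\mathcal{N}$, I have $xU_1N\subseteq xU_1^2\subseteq xU\subseteq W$, so $\pi_N(xU_1)$ is an open neighborhood of $\pi_N(x)$ in $H/N$ whose $\pi_N$-preimage sits inside $W$. Equivalently, $\pi_N(x)\notin\overline{\pi_N(H\setminus W)}$, which shows that $\mathcal{L}$ separates points from closed subsets of $H$.

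Condition (2) is immediate from the stated closure of $\mathcal{N}$ under countable intersections: for $\pi_{N_1},\dots,\pi_{N_k}\in\mathcal{L}$, the subgroup $N=N_1\cap\cdots\cap N_k$ is admissible, and since $N\subseteq N_i$ the natural map $h_i\colon H/N\to H/N_i$, $xN\mapsto xN_i$, is well-defined, continuous, and satisfies $\pi_{N_i}=h_i\circ\pi_N$. For condition (3), a chain $\pi_{N_0}\succ\pi_{N_1}\succ\cdots$ forces $N_0\supseteq N_1\supseteq\cdots$, so $N=\bigcap_i N_i$ is again admissible. The diagonal product $q$ then satisfies
\[
q(x)=q(y)\iff x^{-1}y\in N_i\text{ for every }i\iff x^{-1}y\in N\iff \pi_N(x)=\pi_N(y),
\]
and the universal property of the quotient map produces a continuous injection $\phi\colon H/N\to q(H)$ with $q=\phi\circ\pi_N$.

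For the final claim I would apply a Birkhoff--Kakutani-style construction (as in \cite[Section~5.5]{AT}) to the defining sequence $\{U_n\}$ of $N$, producing a continuous left-invariant pseudometric $d$ on $H$ with $d^{-1}(0)=N$. Left-invariance together with $d^{-1}(0)=N$ gives $d(xn_1,yn_2)=d(x,y)$ for all $n_1,n_2\in N$, so $d$ descends to a well-defined metric $\tilde d$ on $H/N$; since the composition $H\to(H/N,\tilde d)$ is continuous and $\pi_N$ is a quotient map, $\tilde d$ generates a topology coarser than the quotient topology, so $H/N$ is submetrizable. The regular $G_\delta$-diagonal follows from
\[
\Delta_{H/N}=\bigcap_{n\in\omega}\{(\bar x,\bar y)\in (H/N)^2:\tilde d(\bar x,\bar y)\leq 1/n\},
\]
an intersection of closed neighborhoods of the diagonal. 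The main technical obstacle is the pseudometrization step: although classical, one must track the condition $U_{n+1}^3\subseteq U_n$ carefully to guarantee that $d^{-1}(0)$ equals $N$ exactly (rather than some smaller invariant set) and that the resulting $\tilde d$ genuinely defines a coarser topology on $H/N$.
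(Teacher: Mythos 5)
Your proof is correct and follows essentially the same route as the paper: the same construction of an admissible $N=\bigcap_n U_n$ inside a given neighborhood for condition (1), closure of $\mathcal{N}$ under countable intersections for condition (2), and the factorization of the diagonal product through $\pi_{\bigcap_i N_i}$ for condition (3). The only difference is cosmetic: where you sketch the Birkhoff--Kakutani pseudometric argument for submetrizability of $H/N$, the paper simply cites \cite[Lemma~6.10.7]{AT}, which encapsulates exactly that construction.
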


\begin{proof}
The fact that $H/N$ is submetrizable, for each $N\in\mathcal{N}$, follows from \cite[Lemma~6.10.7]{AT}. Hence it suffices to verify that $\mathcal{L}$ is a weak $\sigma$-lattice for $H$. To show that 
$\mathcal{L}$ generates the topology of $H$, take an arbitrary open neighborhood $U$ of the identity $e$ in $H$. There exists a sequence $\{U_n: n\in\omega\}$ of open symmetric neighborhoods of $e$ in $H$ such that $U_0\subset U$ and $U_{n+1}^3\subset U_n$ for each $n\in\omega$. Then $N=\bigcap_{n\in\omega} U_n$ is an admissible subgroup of $H$ contained in $U$. The set $V=\pi_N(U_1)$ is open in $H/N$ and satisfies $e\in \pi_N^{-1}(V)=U_1N\subset U_1U_1\subset U_0\subset U$. Since $H$ is homogeneous, we see that the family $\mathcal{L}$ generates the topology of $H$. 

It is clear that every finite (even countable) subfamily of $\mathcal{L}$ has a lower bound in $(\mathcal{L},\prec)$ since the intersection of every countable subfamily of $\mathcal{N}$ is in $\mathcal{N}$. Finally, let $p_0\succ p_1\succ p_2\succ \cdots$ be a sequence in $\mathcal{L}$. For every $k\in\omega$, choose $N_k\in\mathcal{N}$ such that $p_k=\pi_{N_k}$ and let $N=\bigcap_{k\in\omega} N_k$. Denote by $q$ the diagonal product of the mappings $\pi_{N_k}$ with $k\in\omega$.
Since $N\subset N_k$, there exists a continuous mapping $\varphi_k\colon H/N\to H/N_k$ satisfying
$\pi_{N_k}=\varphi_k\circ\pi_N$, where $k\in\omega$. Let $\varphi$ be the diagonal product of the	family $\{\varphi_k: k\in\omega\}$. It is clear that $\varphi$ is a continuous one-to-one mapping of $H/N$ onto $q(H)$ which satisfies the equality $q=\varphi\circ\pi_N$. This proves that $\mathcal{L}$ is a weak $\sigma$-lattice for $H$.
\end{proof}

The following fact is close to \cite[Lemma~3.4]{BCDT}.

\begin{lemma}\label{Le:Adm}
Let $K$ be a closed invariant subgroup of a topological group $G$ and $\pi\colon G\to G/K$ the quotient homomorphism. Then $\pi(N)$ is an admissible subgroup of $G/K$, for each admissible
subgroup $N$ of $G$. 
\end{lemma}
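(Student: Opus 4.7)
The plan is to push the admissibility sequence for $N$ forward through $\pi$. Writing $\{U_n\}_{n\in\omega}$ for a sequence witnessing admissibility of $N$ in $G$, I would set $V_n := \pi(U_n)$. Since $\pi$ is a continuous open homomorphism, each $V_n$ is an open neighborhood of the identity in $G/K$; symmetry is inherited because $\pi(U_n^{-1}) = \pi(U_n)^{-1}$; and the nesting condition follows immediately from $\pi$ being a homomorphism: $V_{n+1}^3 = \pi(U_{n+1})^3 = \pi(U_{n+1}^3) \subseteq \pi(U_n) = V_n$. These three verifications are essentially formal.

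The substantive content is the identification $\bigcap_n V_n = \pi(N)$. The inclusion $\pi(N) \subseteq \bigcap_n V_n$ is immediate from $N \subseteq U_n$. For the reverse inclusion, I would translate back to $G$: invariance of $K$ yields $\pi^{-1}(V_n) = U_n K = K U_n$, so the question reduces to showing that $\bigcap_n U_n K = NK$. The easy containment is $NK \subseteq \bigcap_n U_n K$; the hard one says that whenever a coset $xK$ meets every $U_n$, it must meet $N$ as well. Equivalently, given $x \in \bigcap_n U_n K$ and choices $u_n \in U_n$, $k_n \in K$ with $x = u_n k_n$, we must exhibit a single element of $xK \cap N$.

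This extraction is where I expect the main obstacle. The natural leverage is the strengthened nesting $\overline{U_{n+1}} \subseteq U_n$ that comes from $U_{n+1}^3 \subseteq U_n$ (giving $\bigcap_n \overline{U_n} = N$), combined with the fact that $xK$ is closed in $G$. The sets $xK \cap \overline{U_n}$ then form a decreasing chain of nonempty closed subsets of the coset $xK$, and it suffices to show their intersection is nonempty. In the absence of any compactness assumption on $K$, this is not automatic from a finite-intersection argument; I would expect the proof to either build on the continuous left-invariant pseudometric $d$ on $G$ associated with $\{U_n\}$, with $N = \{x : d(x,e) = 0\}$, and descend it to $G/K$ by $\bar d(\pi(x),\pi(y)) := \inf_{k \in K} d(xk,y)$ while verifying that its kernel equals $\pi(N)$, or to combine normality of $K$ with the chain $\overline{U_{n+1}K} \subseteq U_n K$ (which itself requires care, since $U_n K$ is only open and not obviously $G_\delta$-small beyond $NK$) to pin $\bigcap_n U_n K$ down to $NK$ directly.
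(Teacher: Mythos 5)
Your setup coincides with the paper's: push the witnessing sequence forward to $V_n=\pi(U_n)$, check symmetry, openness and $V_{n+1}^3\subset V_n$ formally, and reduce everything to the single nontrivial inclusion $\bigcap_n V_n\subseteq\pi(N)$, i.e.\ to showing that a coset $xK$ which meets every $U_n$ must meet $N=\bigcap_n U_n=\bigcap_n\overline{U_n}$. You even isolate the correct decreasing chain of nonempty closed sets $xK\cap\overline{U_{n+1}}$. But you stop exactly there: neither of the two routes you sketch is carried out, so the proof is incomplete at its only substantive step.

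The missing ingredient is compactness of $K$, and the paper's proof finishes in one line from it: $\pi^{-1}(y)\cong K$ is compact, the sets $\pi^{-1}(y)\cap\overline{U_{n+1}}$ form a decreasing sequence of nonempty closed subsets of this compact set, hence have a common point, and that point lies in $\pi^{-1}(y)\cap\bigcap_n\overline{U_n}=\pi^{-1}(y)\cap N$. Your instinct that the intersection step ``is not automatic'' without compactness is correct in the strongest sense: the statement is actually false for a merely closed invariant $K$. Take $G=\R$, $N=\Z$ (admissible via $U_n=\Z+(-\epsilon_n,\epsilon_n)$ with $3\epsilon_{n+1}\le\epsilon_n$) and $K=\alpha\Z$ with $\alpha$ irrational; then $\pi(N)$ is a proper dense subgroup of $\R/\alpha\Z$, hence not closed and therefore not admissible. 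So the word ``closed'' in the hypothesis is a slip of the paper --- its own proof uses, and its only application (Theorem~\ref{Th:Ext}) supplies, a \emph{compact} invariant subgroup --- and neither of your proposed workarounds (descending the pseudometric via $\inf_{k\in K}d(xk,y)$, whose vanishing set is again $\bigcap_n U_nK$, or massaging the chain $\overline{U_{n+1}K}\subseteq U_nK$) can succeed without that hypothesis. Had you simply assumed $K$ compact and run the finite-intersection argument on the compact coset $xK$, your outline would have closed into exactly the paper's proof.
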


\begin{proof}
Consider an admissible subgroup $N$ of $G$ and take a sequence $\{U_n: n\in\omega\}$ of symmetric open neighborhoods of the identity $e$ in $G$ such that $U_{n+1}^3\subset U_n$ for each $n\in\omega$ and $N=\bigcap_{n\in\omega} U_n$. Since the homomorphism $\pi$ is open, $V_n=\pi(U_n)$
is a symmetric open neighborhood of the identity in $G/K$ and $V_{n+1}^3\subset V_n$ for each $n\in\omega$. Clearly $H=\bigcap_{n\in\omega} V_n$ is an admissible subgroup of $G/K$. 

It remains to verify that $\pi(N)=H$. Indeed, take an arbitrary element $y\in H$. It follows from 
the definition of $H$ that $\pi^{-1}(y)\cap U_n\neq\emptyset$ for each $n\in\omega$. Note that 
$\overline{U_{n+1}}\subset U_n$ since the set $U_{n+1}$ is symmetric and $U_{n+1}^3\subset U_n$.
Hence we see that $\pi^{-1}(y)\cap \overline{U_{n+1}}$ is not empty for each $n\in\omega$. Since $p^{-1}(y)\cong K$ is compact and $\overline{U_{n+1}}\subset U_n\subset \overline{U_n}$ for each 
$n\in\omega$, we conclude that 
$$
\emptyset\neq p^{-1}(y)\cap\bigcap_{n\in\omega} \overline{U_n} = p^{-1}(y)\cap\bigcap_{n\in\omega} U_n =  p^{-1}(y)\cap N.
$$
Hence $y\in \pi(N)=H$, i.e.~$\pi(N)=H$, as claimed.
\end{proof}

\begin{lemma}\label{Le:2}
Let $X$ be a Tychonoff space such that the closure of every $G_{\delta,\Sigma}$-set in $X$ 
is a zero-set. Then the Hewitt-Hachbin completion of $X$ has the same property.
\end{lemma}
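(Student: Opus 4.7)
My plan rests on two standard properties of the Hewitt--Nachbin completion: $X$ is $C$-embedded in $\upsilon X$, and $X$ is $G_\delta$-dense in $\upsilon X$ (every nonempty $G_\delta$-set in $\upsilon X$ meets $X$). The strategy is to descend a given $G_{\delta,\Sigma}$-set $Q \subseteq \upsilon X$ to $X$, apply the hypothesis there to get a zero-set, and then ascend the corresponding function back to $\upsilon X$ via $C$-embedding.

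Let $Q = \bigcup_{\alpha} P_\alpha$ be a $G_{\delta,\Sigma}$-set in $\upsilon X$, with each $P_\alpha$ a $G_\delta$-set in $\upsilon X$. Set $P'_\alpha := P_\alpha \cap X$ and $Q' := Q \cap X = \bigcup_\alpha P'_\alpha$. First I would check two easy preliminaries: each $P'_\alpha$ is a $G_\delta$ in $X$ (trace of a $G_\delta$), so $Q'$ is a $G_{\delta,\Sigma}$-set in $X$; and $Q'$ is dense in $Q$. The density step uses $G_\delta$-density: if $x \in P_\alpha$ and $U$ is an open neighborhood of $x$ in $\upsilon X$, then $P_\alpha \cap U$ is a nonempty $G_\delta$ in $\upsilon X$ (a countable intersection of open sets), hence meets $X$.

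Now the hypothesis on $X$ yields a continuous function $f \colon X \to \R$ with $\overline{Q'}^{\,X} = f^{-1}(0)$. By $C$-embedding, $f$ extends to a continuous $\tilde{f} \colon \upsilon X \to \R$; set $F^* := \tilde{f}^{-1}(0)$, a zero-set of $\upsilon X$. The main claim I would prove is
\[
\overline{Q}^{\,\upsilon X} \;=\; F^*.
\]
Writing $F := \overline{Q'}^{\,X} = f^{-1}(0)$, density of $Q'$ in $Q$ gives $\overline{Q}^{\,\upsilon X} = \overline{Q'}^{\,\upsilon X} = \overline{F}^{\,\upsilon X}$. The inclusion $\overline{F}^{\,\upsilon X} \subseteq F^*$ is immediate since $F \subseteq F^*$ and $F^*$ is closed. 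For the reverse inclusion I would once again invoke $G_\delta$-density of $X$ in $\upsilon X$: for any $y \in F^*$ and open $U \ni y$ in $\upsilon X$, the set $F^* \cap U$ is a nonempty $G_\delta$ in $\upsilon X$, so meets $X$ at a point of $X \cap F^* = f^{-1}(0) = F$. Hence $F$ is dense in $F^*$, giving $F^* \subseteq \overline{F}^{\,\upsilon X}$.

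There is no genuine obstacle; the one subtlety worth being careful about is verifying that intersections of $G_\delta$-sets with open sets stay $G_\delta$ in $\upsilon X$ (routine, since both are countable intersections of open sets), and recalling that $X$ is $G_\delta$-dense in $\upsilon X$ as part of the standard characterization of the realcompactification. Both applications of $G_\delta$-density play symmetric roles: the first shows $Q \cap X$ is dense in $Q$, the second shows $F$ is dense in $F^*$, and together they let the zero-set produced on $X$ be transported to a zero-set on $\upsilon X$ equal to $\overline{Q}^{\,\upsilon X}$.
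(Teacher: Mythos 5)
Your proof is correct and follows essentially the same route as the paper's: intersect the $G_{\delta,\Sigma}$-set with $X$, apply the hypothesis there, extend the resulting function by $C$-embedding, and use $G_\delta$-density of $X$ in $\upsilon X$ twice to identify the closure in $\upsilon X$ with the zero-set of the extension. You simply make explicit the density verifications that the paper leaves to the reader.
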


\begin{proof}
It is well known that $X$ is \emph{$G_\delta$-dense} in $\upsilon{X}$, that is, $X$ meets every non-empty $G_\delta$-set in $\upsilon{X}$. Let $P$ be a non-empty $G_{\delta,\Sigma}$-set in $\upsilon{X}$. Then $Q=P\cap X$ is a non-empty $G_{\delta,\Sigma}$-set in $X$. It follows from our assumptions on $X$ that there exists a continuous real-valued function $f$ on $X$ such that $\overline{Q}=f^{-1}(0)$, where the closure of $Q$ is taken in $X$. Denote by $g$ a continuous extension of $f$ over $\upsilon{X}$. Again, making use of the fact that $X$ is $G_\delta$-dense in $\upsilon{X}$, we deduce the equality $\overline{P}=g^{-1}(0)$, where the closure of $P$ is taken in $\upsilon{X}$. Hence $\overline{P}$ is a zero-set, as claimed.
\end{proof}

The next result follows from \cite[Corollary~5.7]{Tk94} in the case of a Tychonoff space $Y$; we extend the corresponding fact to Hausdorff spaces $Y$.

\begin{lemma}\label{Le:3}
Let a Hausdorff space $Y$ with a $G_\delta$-diagonal be a continuous image of a 
product of Lindel\"of $\Sigma$-spaces. Then $Y$ has a countable network. 
\end{lemma}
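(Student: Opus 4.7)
The plan is to reduce the Hausdorff statement to the Tychonoff case \cite[Corollary~5.7]{Tk94} by factoring the continuous surjection through a Tychonoff space that still has a $G_\delta$-diagonal. Let $f\colon X\to Y$ be the given continuous surjection, with $X=\prod_{i\in I}X_i$ a product of Lindel\"of $\Sigma$-spaces. Since every Lindel\"of $\Sigma$-space is Tychonoff, so is $X$. Fix a decreasing sequence $\{G_n\}_{n\in\omega}$ of open subsets of $Y\times Y$ with $\bigcap_n G_n=\Delta_Y$. Pulling these back, the sets $H_n=(f\times f)^{-1}(G_n)$ are open neighborhoods of $\Delta_X$ in $X\times X$, and $\bigcap_n H_n=R_f:=\{(x,x')\in X^2:f(x)=f(x')\}$.

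I would then produce, for each $n$, a continuous pseudometric $\rho_n$ on $X$ whose zero-set $R_{\rho_n}=\{(x,x'):\rho_n(x,x')=0\}$ is contained in $H_n$. Setting $d:=\sum_n 2^{-n}\min(\rho_n,1)$ gives a continuous pseudometric on $X$ with $R_d=\bigcap_n R_{\rho_n}\subseteq\bigcap_n H_n=R_f$. Let $q\colon X\to X_d$ be the associated pseudometric quotient; $X_d$ is (pseudo)metrizable, and hence Tychonoff with a $G_\delta$-diagonal, and it is a continuous image of $\prod_i X_i$. The inclusion $R_d\subseteq R_f$ forces $f$ to factor as $f=\bar f\circ q$ for a continuous surjection $\bar f\colon X_d\to Y$. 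Applying \cite[Corollary~5.7]{Tk94} to $X_d$ yields a countable network $\mathcal N$ on $X_d$, and since countable networks are preserved by continuous surjections, $\bar f(\mathcal N)=\{\bar f(N):N\in\mathcal N\}$ is a countable network for $Y$.

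The main obstacle is the construction of the pseudometrics $\rho_n$ with $R_{\rho_n}\subseteq H_n$: this is a uniform separation problem at the diagonal of $X^2$, demanding continuous real-valued functions on $X$ that distinguish all pairs $(x,x')\notin H_n$ from $\Delta_X$. Mere Tychonoff-ness of $X$ only provides point-from-closed-set separation and is not enough. The construction is made possible by the product-of-Lindel\"of-$\Sigma$ structure of $X$, which lets one shrink each open neighborhood of the diagonal to a cozero neighborhood and then assemble countably many such cozeros, coordinate-wise, into a pseudometric of the required form; this is essentially the Tychonoff-side argument of \cite{Tk94}, and the present Hausdorff extension amounts to checking that the same machinery still produces a pseudometric whose zero-set is contained in each prescribed $H_n$.
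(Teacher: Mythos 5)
Your reduction has two genuine gaps, and the first one hides exactly the theorem that the paper's proof actually rests on. You acknowledge that the crux is producing continuous pseudometrics $\rho_n$ on $X=\prod_{i\in I}X_i$ with $\{\rho_n=0\}\subseteq H_n=(f\times f)^{-1}(G_n)$, but you do not carry this out, and it is not a routine ``shrink to cozero sets and assemble coordinate-wise'' matter: an uncountable product of Lindel\"of $\Sigma$-spaces (e.g.\ $\N^{\omega_1}$) need not be normal or paracompact, so an arbitrary open neighbourhood of $\Delta_X$ need not contain the zero-set of a continuous pseudometric. Moreover, any continuous pseudometric on such a product depends on countably many coordinates, so $\{\rho_n=0\}$ always contains $\ker p_J$ for some countable $J\subseteq I$; hence your construction can only succeed if each $H_n$ already contains some $\ker p_J$, i.e.\ if $f$ itself depends on countably many coordinates. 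That dependence is precisely Engelking's theorem (\cite[Theorem~1]{Eng}, applicable because $Y$ has a $G_\delta$-diagonal and every countable subproduct of $X$ is Lindel\"of), and it is the paper's entire argument: once $f=h_J\circ p_J$ with $J$ countable, $Y$ is a continuous image of the single Lindel\"of $\Sigma$-space $X_J$, and \cite[Lemma~5]{Tk15} (a space in $\mathcal{L}\Sigma$ with a $G_\delta$-diagonal is cosmic) finishes the proof. Your route, even if completed, would be a detour through the same key fact.

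The second gap is the factorization itself: from $R_d\subseteq R_f$ you conclude that $f=\bar f\circ q$ with $\bar f\colon X_d\to Y$ \emph{continuous}, where $X_d$ carries the pseudometric topology. The zero-set inclusion only makes $\bar f$ well defined as a function; continuity would require that $d$-small entourages are mapped by $f\times f$ into neighbourhoods of $\Delta_Y$ that form a base at the diagonal, and neither half of that is available (the sets $G_n$ witnessing a $G_\delta$-diagonal need not be a neighbourhood base at $\Delta_Y$, and you only control the zero-sets of the $\rho_n$, not their small balls). A minimal counterexample to the inference as stated: take $f$ the identity from the Sorgenfrey line to itself and $d$ the Euclidean metric, a continuous pseudometric with $R_d=\Delta\subseteq R_f$; the induced map from $(\R,\text{Euclidean})$ to the Sorgenfrey line is not continuous. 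So this step would need a substantially stronger uniform refinement property of $d$, which again is only obtainable after factoring $f$ through a Lindel\"of (hence paracompact) countable subproduct.
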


\begin{proof}
Denote by $f$ a continuous mapping of a product $X=\prod_{i\in I} X_i$ of Lindel\"of 
$\Sigma$-spaces onto $Y$. Note that for every countable set $J\subset I$, the 
subproduct $X_J=\prod_{i\in J} X_i$ is a Lindel\"of $\Sigma$-space and, hence, is Lindel\"of. 
Since $Y$ has a $G_\delta$-diagonal, it follows from \cite[Theorem~1]{Eng} that $f$ depends 
at most on countably many coordinates, so we can find  a countable set $J\subset I$ and a mapping $h_J\colon X_J\to Y$, such that $f = h_J\circ p_J$, where $p_J\colon X\to X_J$ is the projection. Since the set $J$ is countable, $X_J$ is a Lindel\"of $\Sigma$-space. It follows that $Y$ is a continuous image of a Lindel\"of $\Sigma$-space, so $Y$ is in the class $\mathcal{L}\Sigma$ (see Definition~1 and Proposition~2 in \cite{Tk15}). Finally, according to \cite[Lemma~5]{Tk15}, every space in $\mathcal{L}\Sigma$ with a $G_\delta$-diagonal has a countable network. 
\end{proof}

\begin{lemma}\label{Le:4}
Let $Y$ be an $\omega$-cellular space with a weak $\sigma$-lattice of open mappings onto regular spaces with a countable network. Then the closure in $Y$ of every $G_{\delta,\Sigma}$-set is a zero-set, so $Y$ is an Efimov space.  
\end{lemma}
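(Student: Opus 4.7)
My plan is to exhibit $p\in\mathcal{L}$ and a closed set $F\subset p(Y)$ with $\overline{P}=p^{-1}(F)$, and then to observe that $F$ is automatically a zero-set of $p(Y)$. The latter is almost free: for every $p\in\mathcal{L}$, the codomain $p(Y)$ is regular with a countable network, hence hereditarily Lindel\"of, so every open subset of $p(Y)$ is $F_\sigma$ (cover an open $U$ by open $V_x$ with $\overline{V_x}\subset U$ and extract a countable subcover); combined with normality (regular $+$ Lindel\"of), this makes $p(Y)$ perfectly normal, and Vedenissov's theorem then gives that every closed subset of $p(Y)$ is a zero-set. Pulling such a zero-set back through $p$ yields a zero-set in $Y$, as required.

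For the construction, write $P=\bigcup_\alpha P_\alpha$ with each $P_\alpha$ a $G_\delta$-set, and apply $\omega$-cellularity to extract a countable subfamily $\{P_n:n\in\omega\}$ whose union is dense in $P$. Fix $n$ and $x\in P_n=\bigcap_k U_{n,k}$ with the $U_{n,k}$ open. Since $\mathcal{L}$ generates the topology of $Y$, for each $k$ there exist $p_{n,x}^k\in\mathcal{L}$ and an open $V_{n,x,k}\subset p_{n,x}^k(Y)$ with $x\in(p_{n,x}^k)^{-1}(V_{n,x,k})\subset U_{n,k}$. Iterating axiom~(2) to build a decreasing sequence in $(\mathcal{L},\prec)$ and then applying~(3) produces a single $p_{n,x}\in\mathcal{L}$ with $p_{n,x}\prec p_{n,x}^k$ for every $k$; rewriting each $(p_{n,x}^k)^{-1}(V_{n,x,k})$ through $p_{n,x}$ and intersecting gives a $G_\delta$-set $D_{n,x}\subset p_{n,x}(Y)$ with $x\in p_{n,x}^{-1}(D_{n,x})\subset P_n$. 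Now apply $\omega$-cellularity a second time to the family $\{p_{n,x}^{-1}(D_{n,x}):x\in P_n\}$, whose union is $P_n$, to extract countably many $p_{n,x_j}^{-1}(D_{n,x_j})$ whose union $A_n$ is dense in $P_n$.

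The collection $\{p_{n,x_j}:n,j\in\omega\}$ is countable, so a further application of~(2) and~(3) yields $p\in\mathcal{L}$ with $p\prec p_{n,x_j}$ for all $n,j$. Then $p_{n,x_j}^{-1}(D_{n,x_j})=p^{-1}(E_{n,j})$ for $G_\delta$-sets $E_{n,j}\subset p(Y)$, whence $A_n=p^{-1}(Q_n)$ with $Q_n=\bigcup_j E_{n,j}$. Since $A_n\subset P_n\subset\overline{A_n}$ for each $n$, one obtains
\[
\overline{P}=\overline{\bigcup_n A_n}=\overline{p^{-1}\bigl(\bigcup_n Q_n\bigr)}=p^{-1}\bigl(\overline{\bigcup_n Q_n}\bigr),
\]
where the last equality uses openness of $p$. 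Taking $F=\overline{\bigcup_n Q_n}$ completes the argument, and the Efimov conclusion follows since every zero-set is a $G_\delta$-set.

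The delicate point I anticipate is ensuring that $A_n$ is dense in $P_n$. A naive attempt would approximate $P_n$ as a countable intersection $\bigcap_k p^{-1}(W_{n,k})$, where $p^{-1}(W_{n,k})$ is a $p$-saturated open subset dense in $U_{n,k}$; but deducing density of the intersection inside $P_n$ would require $P_n$ to be a Baire subspace, which we have no right to assume. The $\omega$-cellularity hypothesis sidesteps this: applied to a \emph{family of $G_\delta$-sets} that exhausts $P_n$, it delivers a countable subfamily whose union is \emph{directly} dense in $P_n$, which is the technical heart of the proof.
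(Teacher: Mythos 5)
Your argument is correct and follows essentially the same route as the paper's proof: both decompose the $G_{\delta,\Sigma}$-set into $\mathcal{L}$-saturated pieces, use $\omega$-cellularity to pass to countably many, take a common refinement $p\in\mathcal{L}$, exploit openness of $p$ to write $\overline{P}=p^{-1}(\overline{Q})$, and use regularity plus countable network downstairs to see that $\overline{Q}$ is a zero-set. The only cosmetic differences are that the paper applies $\omega$-cellularity once to the full family of (closed) cylindric sets, whereas you apply it in two stages and keep the saturated pieces as $G_\delta$-sets until the final closure.
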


\begin{proof}
Every regular space with a countable network is Lindel\"of and, hence, normal. Since $Y$ has a weak $\sigma$-lattice of continuous mappings onto normal spaces, it is Tychonoff. Let $\mathcal{R}$ be a corresponding weak $\sigma$-lattice for $Y$ of open mappings onto regular spaces with countable networks. 

Let us call a subset $F$ of $Y$ \emph{$\mathcal{R}$-cylindric} if one can find $f\in\mathcal{R}$ and
a closed subset $C$ of $p(Y)$ such that $F=p^{-1}(C)$. Clearly $\mathcal{R}$-cylindric subsets of $Y$ are closed zero-sets in $Y$. We claim that every non-empty $G_\delta$-set $P$ in $Y$ is the union of a family of $\mathcal{R}$-cylindric sets. Indeed, take an arbitrary point $x\in P$. Let $P=\bigcap_{n\in\omega} U_n$, where each $U_n$ is open in $Y$. Since $Y$ is Tychonoff and $\mathcal{R}$ generates the topology of $Y$, we can find, for every $n\in\omega$, an element $f_n$ of $\mathcal{R}$ and an open set $V_n$ in $f_n(Y)$ such that $f_n(x)\in O_n$ and $f_n^{-1}(\overline{O_n})\subset U_n$. As $\mathcal{R}$ is a weak $\sigma$-lattice for $Y$, there exists $g\in\mathcal{R}$ such that $g\prec f_n$ for each $n\in\omega$. Hence, for every $n\in\omega$, there exists a continuous mapping $h_n\colon g(Y)\to f_n(Y)$ satisfying $f_n=h_n\circ g$. Let $W_n=h_n^{-1}(O_n)$, where $n\in\omega$. Then $g(x)\in W_n$ and $g^{-1}(\overline{W_n})\subset g^{-1}(h_n^{-1}(\overline{O_n})) = f_n^{-1}(\overline{O_n})\subset  U_n$. Put $C=\bigcap_{n\in\omega} \overline{W_n}$. Then $C$ is a closed subset of $g(Y)$ containing the point $g(x)$. Since the space $g(Y)$ is regular and has a countable network, $C$ is a zero-set in $g(Y)$. Hence $g^{-1}(C)$ is a zero-set in $Y$. It also follows from our definition of the sets $W_n$ that
$$ 
x\in g^{-1}(C)=\bigcap_{n\in\omega} g^{-1}(\overline{W_n}) \subset 
\bigcap_{n\in\omega} U_n\subset F.
$$
This implies our claim.

Let $F$ be a $G_{\delta,\Sigma}$-set in $Y$. We have just proved that there exists a family 
$\gamma$ of $\mathcal{R}$-cylindric sets in $Y$ such that $F=\bigcup\gamma$. Since $Y$ 
is $\omega$-cellular, $\gamma$ contains a countable subfamily $\lambda$ such that 
$\bigcup\lambda$ is dense in $\bigcup\gamma$. Let $\lambda=\{F_n: n\in\omega\}$. For 
every $n\in\omega$, take an element $g_n\in\mathcal{R}$ and a closed subset $C_n$ of 
$g_n(Y)$ such that $F_n=g_n^{-1}(C_n)$. Since $\mathcal{R}$ is a weak $\sigma$-lattice 
for $Y$, there exists $g\in\mathcal{R}$ satisfying $g\prec g_n$ for each $n\in\omega$. It follows 
from our choice of the mapping $g$ that $\bigcup\lambda=g^{-1}(g(\bigcup\lambda))$. Since the mapping $g$ is open and $\bigcup\lambda$ is dense in $\bigcup\gamma$, we have that 
$$
\overline{\bigcup\lambda} = g^{-1}g \Big(\hskip1pt\overline{\bigcup\lambda}\hskip1pt\Big) =
g^{-1}\left(g \big(\hskip1pt\overline{\bigcup\gamma}\big)\right)  \supseteq
g^{-1}g\big(\bigcup\gamma\big)\supseteq \bigcup\lambda,
$$
that is, $\bigcup\lambda$ is dense in $\bigcup\gamma$. This implies the equalities $\overline{\bigcup\lambda} = g^{-1}g \Big(\hskip1pt\overline{\bigcup\lambda}\hskip1pt\Big) =\overline{\bigcup\gamma}$. Since $g\big(\overline{\bigcup\lambda}\big)=\overline{g(\bigcup\lambda)}$ is a closed subset of $g(Y)$
(once again we apply the fact that the mapping $g$ is open), the sets $C=\overline{g(\bigcup\lambda)}$ and the inverse image of $C$ under $g$ are zero-sets. This completes the proof of the lemma.
\end{proof}

Now we are ready to prove the next main result of this section.

\begin{theorem}\label{Th:1y}
Let a topological group $H$ be a continuous homomorphic image of an almost connected pro-Lie group $G$. Then the closure in $H$ of every $G_{\delta,\Sigma}$-set is a zero-set, so $H$ is an Efimov space. The Hewitt--Nachbin completion of $H$ is also an Efimov space.
\end{theorem}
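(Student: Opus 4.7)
The strategy is to verify the hypotheses of Lemma~\ref{Le:4} for $H$, and then transfer the conclusion to $\upsilon H$ via Lemma~\ref{Le:2}. The $\omega$-cellularity of $H$ is already supplied by Theorem~\ref{Th:1}(a), so the heart of the argument is exhibiting the required weak $\sigma$-lattice of open mappings onto regular spaces with a countable network.

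The natural candidate is the family $\mathcal{L}=\{\pi_N:N\in\mathcal{N}\}$ of quotient mappings of $H$ onto the left coset spaces $H/N$, indexed by the admissible subgroups $N$ of $H$. Lemma~\ref{Le:1} already certifies that $\mathcal{L}$ is a weak $\sigma$-lattice for $H$ consisting of open mappings onto submetrizable spaces, and in particular each $H/N$ is Hausdorff with a (regular) $G_\delta$-diagonal. The one remaining point is to upgrade \emph{submetrizable} to \emph{countable network}. For this I would invoke the structure theorem \cite[Corollary~8.9]{HM2}, which realizes $G$ as $\R^\kappa\times K$ with $K$ compact; since both $\R$ and $K$ are Lindel\"of $\Sigma$-spaces, each $H/N$ is a continuous image of a product of Lindel\"of $\Sigma$-spaces. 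Combined with the $G_\delta$-diagonal, Lemma~\ref{Le:3} then delivers a countable network on $H/N$. Lemma~\ref{Le:4} now applies and yields that the closure in $H$ of every $G_{\delta,\Sigma}$-set is a zero-set; since every zero-set is a $G_\delta$-set, $H$ is an Efimov space.

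For the statement about $\upsilon H$ there is essentially nothing more to do: $H$ is Tychonoff as a Hausdorff topological group, so Lemma~\ref{Le:2} lifts the zero-set property for closures of $G_{\delta,\Sigma}$-sets from $H$ to $\upsilon H$, and hence $\upsilon H$ is Efimov as well. The only non-mechanical step is the observation that the codomains of the admissible-subgroup lattice can be given countable networks by routing through the pro-Lie structure theorem and Lemma~\ref{Le:3}; once this is in place, the theorem is a straightforward assembly of Theorem~\ref{Th:1}, Lemma~\ref{Le:1}, Lemma~\ref{Le:2}, Lemma~\ref{Le:3}, and Lemma~\ref{Le:4}.
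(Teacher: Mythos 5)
Your proposal is correct and follows essentially the same route as the paper's proof: the admissible-subgroup lattice from Lemma~\ref{Le:1}, the upgrade from submetrizable to countable network via the structure theorem and Lemma~\ref{Le:3}, then Lemma~\ref{Le:4} for $H$ and Lemma~\ref{Le:2} for $\upsilon H$. Nothing is missing.
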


\begin{proof}
We recall that the group $G$ is \emph{homeomorphic} to the product $\R^\kappa\times K$, 
where $\R$ is the real line with the usual topology, $\kappa$ is a cardinal, and $K$ is a 
compact topological group. Let $\mathcal{N}$ be the family of all admissible subgroups of $H$. 
Then, by Lemma~\ref{Le:1}, the family $\{\pi_N: N\in\mathcal{N}\}$ is a weak $\sigma$-lattice for 
$H$ which consists of open mappings onto Hausdorff spaces with a $G_\delta$-diagonal. Since 
$G$ is homeomorphic to the product of Lindel\"of $\Sigma$-spaces and $H$ is a continuous homomorphic image of $G$, it follows from Lemma~\ref{Le:3} that the quotient space $H/N$ 
has a countable network, for each $N\in\mathcal{N}$. We know in addition that the space $H$ 
is $\omega$-cellular. Hence Lemma~\ref{Le:4} implies that the closure of every 
$G_{\delta,\Sigma}$-set in $H$ is a zero-set. Therefore $H$ is an Efimov space. 

Finally, we apply Lemma~\ref{Le:2} to conclude that the closure in $\upsilon{H}$ of 
every $G_{\delta,\Sigma}$-set is a zero-set, so $\upsilon{H}$ is also an Efimov space. 
\end{proof}

\begin{corollary}\label{Cor:AlCy}
Every almost connected pro-Lie group $H$ is an Efimov space. 
\end{corollary}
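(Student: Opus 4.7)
The plan is to obtain Corollary~\ref{Cor:AlCy} as an immediate specialization of Theorem~\ref{Th:1y}. The observation is that if $H$ is itself an almost connected pro-Lie group, then $H$ is trivially a continuous homomorphic image of an almost connected pro-Lie group, namely of $G=H$ via the identity homomorphism $\mathrm{id}_H\colon H\to H$, which is continuous and surjective.

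Concretely, I would first note that the identity map $\mathrm{id}_H$ satisfies all the hypotheses required of the continuous homomorphism in the statement of Theorem~\ref{Th:1y}, since $H=G$ is almost connected and pro-Lie by assumption. Applying Theorem~\ref{Th:1y} directly to the pair $(G,H)=(H,H)$ then yields that the closure in $H$ of every $G_{\delta,\Sigma}$-set in $H$ is a zero-set, and in particular a $G_\delta$-set in $H$. By the definition of Efimov space recalled in the Notation and terminology subsection, this is exactly the assertion that $H$ is an Efimov space.

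There is essentially no obstacle here: the corollary is a formal consequence of Theorem~\ref{Th:1y}, so no additional arguments about admissible subgroups, weak $\sigma$-lattices, or the Hofmann--Morris homeomorphism $G\cong\R^\kappa\times K$ need to be invoked beyond what Theorem~\ref{Th:1y} has already packaged. If one wished to emphasize the point, one could add a parenthetical remark noting that Theorem~\ref{Th:1y} additionally gives that $\upsilon H$ is an Efimov space, but this is not required by the statement.
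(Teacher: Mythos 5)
Your proof is correct and is exactly the paper's intended derivation: the corollary is stated without a separate proof precisely because an almost connected pro-Lie group $H$ is a continuous homomorphic image of itself under $\mathrm{id}_H$, so Theorem~\ref{Th:1y} applies directly. Your parenthetical observation that a zero-set is in particular a $G_\delta$-set, which is what the definition of an Efimov space requires, is the only (trivial) bridging step needed, and you have supplied it.
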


\section{Strong $\sigma$-lattices of homomorphisms}\label{Strong_lattices}
The following stronger version of the notion of weak $\sigma$-lattice plays
an important role in the rest of the article. It enables us to generalize several 
results of Section~\ref{R_pro-Lie} (see Theorems~\ref{Th:2} and Corollary~\ref{Cor:EXT}).

\begin{definition}[See \cite{Tk94}]\label{Def:2}
A family $\mathcal{L}$ of continuous mappings of a space $X$ elsewhere is said to be 
a \emph{strong $\sigma$-lattice} for $X$ if it has the following properties:
  \begin{enumerate}
    \item[(1)] $\mathcal{L}$ generates the topology of $X$;
    \item[(2)] every finite subfamily of $\mathcal{L}$ has a lower bound in $\mathcal{L}$ with 
                   respect to the partial preorder $\prec$ introduced in Definition~\ref{Def:1};
    \item[(3)] for every decreasing sequence $p_0,p_1,p_2,\ldots$ in $(\mathcal{L},\prec)$, 
           the diagonal product of the family $\{p_n: n\in\omega\}$, say, $p_\omega$ belongs to 
           $\mathcal{L}$, and if a sequence $\{x_n: n\in\omega\}\subset X$ has the property 
           $p_k(x_n)=p_k(x_k)$ whenever $k<n$, then there exists $x\in X$ such that $p_n(x)=x_n$ 
           for each $n\in\omega$. We will also denote the mapping $p_\omega\in\mathcal{L}$ by 
           $\lim_{n\in\omega} p_n$.
  \end{enumerate}
\end{definition}

It is worth mentioning that condition (3) of the above definition is equivalent to saying that the limit space of the inverse sequence $\{p_n(X),\ p^{n}_k: k<n,\ k,n\in\omega\}$ is naturally homeomorphic
to $p_\omega(X)$, where $p^n_k=p_k\circ p_n^{-1}$ if $k<n$. A typical example of a strong $\sigma$-lattice is the family of all projections of the product space $X=\prod_{i\in I} X_i$ onto countable subproducts. 

\begin{definition}\label{Def:3}
A family $\mathcal{L}$ of continuous mappings of a space $X$ elsewhere has the \emph{factorization property} if for every continuous mapping mapping $f\colon X\to Y$ to a second countable Hausdorff space $Y$, one can find an element $p\in\mathcal{L}$ with $p\prec f$.
\end{definition}
 
Let $X=\prod_{i\in I} X_i$ be a product of separable spaces. According to a theorem of Glicksberg in \cite{Gli}, the family of projections of $X$ onto countable subproducts has the factorization property. 

We recall that a continuous mapping $f\colon X\to Y$ is called \emph{$d$-open} (or \emph{nearly open}) if for every open set $U\subset X$, there exists an open set $V\subset Y$ such that $f(U)$ 
is a dense subset of $V$ (see \cite{Tk81}).
 
\begin{lemma}\label{Le:5.0}
Let $\mathcal{L}$ be a weak $\sigma$-lattice of $d$-open, quotient mappings for a weakly 
Lindel\"of space $X$. Then $\mathcal{L}$ has the factorization property.
\end{lemma}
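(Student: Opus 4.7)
The plan is to exploit a countable base $\{V_n:n\in\omega\}$ of $Y$, set $U_n=f^{-1}(V_n)$, and reduce the construction of $p\in\mathcal{L}$ with $p\prec f$ to arranging that each $U_n$ is $p$-saturated, i.e.\ $U_n=p^{-1}(p(U_n))$. This reduction works because members of $\mathcal{L}$ are quotient mappings and $Y$ is Hausdorff with base $\{V_n\}$: $p\prec f$ then translates to the set-theoretic condition $p(x)=p(y)\Rightarrow f(x)=f(y)$ (continuity of the factor $h\colon p(X)\to Y$ follows from quotientness of $p$), which in turn translates to the saturation condition on each $U_n$.

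For each $n$, I consider the family $\mathcal{U}_n$ of ``elementary'' open sets $q^{-1}(W)$, with $q\in\mathcal{L}$ and $W$ open in $q(X)$, lying wholly in $U_n$ or wholly in $X\setminus\overline{U_n}$. Since $\mathcal{L}$ generates the topology and $\partial U_n=\overline{U_n}\setminus U_n$ is nowhere dense in $X$, the union $\bigcup\mathcal{U}_n=U_n\cup(X\setminus\overline{U_n})$ is dense in $X$. Weak Lindel\"ofness of $X$ furnishes a countable $\mathcal{U}_n'\subseteq\mathcal{U}_n$ with $\bigcup\mathcal{U}_n'$ dense in $X$ (a standard consequence: every open family in a weakly Lindel\"of space has a countable subfamily with the same closure, obtained by extending to a cover of $X$ via $X\setminus\overline{\bigcup\mathcal{U}_n}$ and applying the definition). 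Using conditions~(2) and~(3) of the weak $\sigma$-lattice definition, I then take a single $p\in\mathcal{L}$ with $p\prec q$ for every $q$ appearing in any $\mathcal{U}_n'$, $n\in\omega$; every such $q^{-1}(W)$ is then $p$-saturated. Writing $\widetilde{A}_n$, respectively $\widetilde{B}_n$, for the union of those members of $\mathcal{U}_n'$ contained in $U_n$, respectively in $X\setminus\overline{U_n}$, I obtain $p$-saturated open sets $\widetilde{A}_n\subseteq U_n$ and $\widetilde{B}_n\subseteq X\setminus\overline{U_n}$ with $\widetilde{A}_n\cup\widetilde{B}_n$ dense in $X$.

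The hard part will be upgrading the mere density of $\widetilde{A}_n\cup\widetilde{B}_n$ to the exact saturation $U_n=p^{-1}(p(U_n))$; this is where the joint hypothesis that $p$ is both $d$-open and a quotient mapping enters crucially. Quotientness makes the images $A_n=p(\widetilde{A}_n)$ and $B_n=p(\widetilde{B}_n)$ open and disjoint in $Z=p(X)$, with $A_n\cup B_n$ dense; $d$-openness applied to the open set $U_n$ forces $p(U_n)\subseteq\overline{A_n}$, and symmetrically $p(X\setminus\overline{U_n})\subseteq\overline{B_n}$, so any ``ghost'' intersection $p(U_n)\cap p(X\setminus\overline{U_n})$ is confined to the nowhere-dense closed set $\overline{A_n}\cap\overline{B_n}$. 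Eliminating residual ghost fibres of $p$ lying over this nowhere-dense set, via a final diagonal refinement within $\mathcal{L}$ using the topology-generating property together with $d$-openness, yields $U_n=p^{-1}(p(U_n))$ for every $n$, and hence $p\prec f$. Bridging this gap between density and exact saturation is the technical endgame of the proof, and it is precisely the combination of $d$-openness (dense-interior images of opens) and the quotient property ($p$-saturated opens have open images) that makes it possible.
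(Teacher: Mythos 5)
Your opening reduction is sound: since the maps in $\mathcal{L}$ are quotient and $Y$ is Hausdorff with base $\{V_n: n\in\omega\}$, it does suffice to produce $p\in\mathcal{L}$ for which every $U_n=f^{-1}(V_n)$ is $p$-saturated. The first genuine gap is the appeal to weak Lindel\"ofness. The family $\mathcal{U}_n$ is not an open cover of $X$: its union is $U_n\cup(X\setminus\overline{U_n})$, which omits the boundary $\overline{U_n}\setminus U_n$, and the definition of weakly Lindel\"of used in this paper applies only to covers. The ``standard consequence'' you invoke for arbitrary open families is false: in the Stone--\v{C}ech compactification $\beta D$ of an uncountable discrete space $D$ (compact, hence weakly Lindel\"of), the family $\{\{d\}: d\in D\}$ consists of open sets with dense union, yet no countable subfamily has dense union, since the closure in $\beta D$ of a countable $C\subset D$ misses $D\setminus C$. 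Your suggested repair adjoins $X\setminus\overline{\bigcup\mathcal{U}_n}$, which is empty precisely because $\bigcup\mathcal{U}_n$ is already dense, so it does not produce a cover either.

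The second gap is the ``technical endgame,'' which you describe but do not carry out, and which I do not believe closes along these lines. Even granting a countable $\mathcal{U}_n'$ with dense union, the $d$-open/quotient interplay does rule out a fibre of $p$ meeting both $U_n$ and $X\setminus\overline{U_n}$ (each of the two points has a saturated elementary neighbourhood on its own side, $d$-openness makes their $p$-images dense in open sets containing the common image point, and both must then meet the open image of a single saturated member of $\mathcal{U}_n'$, which lies entirely on one side). But nothing controls a fibre meeting both $U_n$ and its boundary: such ``ghost'' fibres live over the nowhere dense set $\overline{A_n}\cap\overline{B_n}$, and a further countable refinement inside $\mathcal{L}$ has no mechanism for destroying them, since condition (3) of a weak $\sigma$-lattice only allows passage to the diagonal of a sequence you have already specified. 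At best you obtain $p^{-1}(p(U_n))\subseteq\overline{U_n}$, which gives $p\prec f$ only after invoking regularity of $Y$ to choose $V_n\ni f(x)$ with $f(y)\notin\overline{V_n}$. The paper's proof avoids both problems by working with the oscillation of a real-valued $f$: for each $n$, \emph{every} point of $X$, boundary points included, receives a saturated elementary neighbourhood on which $f$ oscillates by less than $1/n$, so the relevant family is an honest open cover to which weak Lindel\"ofness applies, and the conclusion at each stage is the quantitative bound $|f(x)-f(y)|<3/n$ rather than exact saturation of a fixed open set.
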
 
 
\begin{proof}
Let $f$ be a continuous real-valued function on $X$. For a non-empty set $A\subset X$, we put 
$$
osc(f,A)=\sup\{|f(x)-f(y)|: x,y\in A\}.
$$
Clearly $osc(f,A)$ is finite iff $f$ is bounded on $A$; otherwise $osc(f,A)$ is defined to be $\infty$. 
 
Since $f$ is continuous, for every $x\in X$ and every positive integer $n$, there exists an open neighborhood $U_n(x)$ of $x$ in $X$ such that $osc(f,U_n(x))<1/n$. Choose an element $p_{n,x}
\in\mathcal{L}$ and an open set $V_n(x)$ in $p_{n,x}(X)$ such that $x\in p_{n,x}^{-1}(V_n(x))\subset U_n(x)$. Let $O_n(x)=p_{n,x}^{-1}(V_n(x))$. It follows from $O_n(x)\subset U_n(x)$ that $osc(f,O_n(x))<1/n$. Since $X$ is weakly Lindel\"of we can find, for every integer $n>0$, a 
countable set $B(n)\subset X$ such that $\bigcup_{y\in B(n)} O_n(y)$ is dense in $X$. 
 
Since $\mathcal{L}$ is a weak $\sigma$-lattice for $X$ and the sets $B(n)$ are countable, there exists $p\in\mathcal{L}$ such that $p\prec p_{n,x}$ for all $n\in\N$ and $x\in B(n)$. We claim that $p\prec f$. First we show that if $x,y\in X$ and $p(x)=p(y)$, then $f(x)=f(y)$. Suppose for a contradiction that there exist $x,y\in X$ such that $p(x)=p(y)$ and $f(x)\neq f(y)$. Then $|f(x)-f(y)|\geq 3/n$ for some integer $n>0$. Since the mapping $p$ is $d$-open, there are open sets $W_x$ and $W_y$ in $p(X)$ such that $p(O_n(x))$ is dense in $W_x$ and $p(O_n(y))$ is dense in $W_y$. It is clear that $W^*=W_x\cap W_y$ is an open neighborhood of the point $p(x)$. Since the set $\bigcup_{z\in B(n)} O_n(z)$ is dense in $X$, we can find $z\in B(n)$ such that $W^*\cap p(O_n(z))\neq\emp$. It follows from $p\prec p_{n,z}$ that $O_n(z)=p^{-1}p(O_n(z))$ and $p(O_n(z))$ is open in $p(X)$. Since the sets $p(O_n(x))\cap W^*$ and $p(O_n(y))\cap W^*$ are dense in $W^*$, we see that $p(O_n(x))\cap p(O_n(z))\neq\emp$ and $p(O_n(y))\cap p(O_n(z))\neq\emp$. Therefore, $O_n(x)\cap O_n(z)\neq\emp$ and $O_n(y)\cap O_n(z)\neq\emp$. Take elements $a\in O_n(x)\cap O_n(z)$ and $b\in O_n(y)\cap O_n(z)$. Our choice of the sets $O_n(x)$, $O_n(y)$ and $O_n(z)$ implies that 
$$
|f(x)-f(y)|\leq |f(x)-f(a)| + |f(a)-f(b)| + |f(b)-f(y)| < 1/n + 1/n + 1/n = 3/n,
$$
thus contradicting the inequality $|f(x)-f(y)|\geq 3/n$. This proves our claim. 

It follows from the claim that there exists a real-valued function  $\varphi$ on $p(X)$ such that $f=\varphi\circ p$. Since $p$ is quotient and $f$ is continuous, the function $\varphi$ is continuous as well. Hence $p\prec f$ and the lattice $\mathcal{L}$ has the factorization property.
\end{proof}
 
The next result complements Lemma~\ref{Le:5.0} in the case when the mappings of 
$\mathcal{L}$ are open. 
 
\begin{lemma}\label{Le:5}
Let $\mathcal{L}$ be a weak $\sigma$-lattice of open mappings for a weakly Lindel\"of space $X$. Then, for every continuous mapping $f\colon X\to Y$ to a Hausdorff space $Y$ with a countable network, one can find $p\in\mathcal{L}$ satisfying $p\prec f$.
\end{lemma}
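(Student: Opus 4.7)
My plan is to adapt the argument of Lemma~\ref{Le:5.0}, replacing the metric structure of $\R$ with a countable separating family of open subsets of $Y$ extracted from the countable network. Since $Y$ is Hausdorff with a countable network, the square $Y^2$ also has a countable network and is therefore hereditarily Lindel\"of. The set $Y^2\sm\Delta_Y$ is open in $Y^2$, and Hausdorffness provides, at every non-diagonal point, a rectangular neighborhood $V\times W$ with $V,W$ open in $Y$ and $V\cap W=\emp$. Hereditary Lindel\"ofness yields a countable subfamily $\{V_n\times W_n:n\in\ome\}$ covering $Y^2\sm\Delta_Y$, and since $W_n$ is open and disjoint from $V_n$, automatically $W_n\sub Y\sm\ovl{V_n}$. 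Thus the family $\{V_n:n\in\ome\}$ \emph{separates} $Y$ in the sense that for any distinct $y,y'\in Y$ some $n$ satisfies $y\in V_n$ and $y'\in Y\sm\ovl{V_n}$.

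For each $n\in\ome$ I would then construct elements of $\mathcal L$ respecting this separation. For every $x\in X$ with $f(x)\in V_n$, openness of $f^{-1}(V_n)$ together with the assumption that $\mathcal L$ generates the topology of $X$ allows me to pick $p_{n,x}\in\mathcal L$ and an open set $\tilde V_{n,x}\sub p_{n,x}(X)$ with $x\in O_{n,x}:=p_{n,x}^{-1}(\tilde V_{n,x})\sub f^{-1}(V_n)$. Analogously, if $f(x)\in Y\sm\ovl{V_n}$ I take $O_{n,x}\sub f^{-1}(Y\sm\ovl{V_n})$, and for the boundary case $f(x)\in\partial V_n$ I choose an arbitrary $\mathcal L$-neighborhood $O_{n,x}$. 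The family $\{O_{n,x}:x\in X\}$ is an open cover of $X$, so weak Lindel\"ofness gives a countable set $B_n\sub X$ with $\bigcup_{y\in B_n}O_{n,y}$ dense in $X$. The weak $\sigma$-lattice property then delivers $p\in\mathcal L$ dominating every $p_{n,y}$ for $n\in\ome$, $y\in B_n$; since $\mathcal L$ consists of open mappings, $p$ is open, and each such $O_{n,y}$ is $p$-saturated. To verify $p\prec f$, assume $p(x)=p(x')$ with $f(x)\neq f(x')$, pick a separating index $n$, and set $U_x=f^{-1}(V_n)$, $U_{x'}=f^{-1}(Y\sm\ovl{V_n})$. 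Openness of $p$ makes $W^*=p(U_x)\cap p(U_{x'})$ an open neighborhood of $p(x)$ in $p(X)$, and transfers density from $X$ to $p(X)$, so some $y\in B_n$ satisfies $p(O_{n,y})\cap W^*\neq\emp$; $p$-saturation then yields $b\in U_x\cap O_{n,y}$ and $c\in U_{x'}\cap O_{n,y}$. If $O_{n,y}$ lies in $f^{-1}(V_n)$ or in $f^{-1}(Y\sm\ovl{V_n})$, this contradicts $V_n\cap(Y\sm\ovl{V_n})=\emp$. Continuity of the factor map $h\colon p(X)\to Y$ with $h\circ p=f$ follows because the open surjection $p$ onto $p(X)$ is a quotient map.

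The main obstacle is arranging that the $y\in B_n$ produced by the density argument is \emph{good}, i.e.\ $f(y)\in V_n\cup(Y\sm\ovl{V_n})$ rather than $f(y)\in\partial V_n$, since in the latter case $O_{n,y}$ was chosen arbitrarily and the contradiction fails. To circumvent this I would refine the extraction by splitting $B_n$ as $B_n^1\cup B_n^2$ with $B_n^1\sub f^{-1}(V_n)$ and $B_n^2\sub f^{-1}(Y\sm\ovl{V_n})$, arranged so that $\bigcup_{y\in B_n^1}O_{n,y}$ is dense in $f^{-1}(V_n)$ and $\bigcup_{y\in B_n^2}O_{n,y}$ is dense in $f^{-1}(Y\sm\ovl{V_n})$. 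The requisite application of weak Lindel\"ofness uses the open cover of $X$ obtained by adjoining, to the family $\{O_{n,y}:y\in f^{-1}(V_n)\}$, $\mathcal L$-neighborhoods contained in $X\sm\ovl{f^{-1}(V_n)}$ (disjoint from $f^{-1}(V_n)$) and $\mathcal L$-neighborhoods at points of $\partial f^{-1}(V_n)$. The first complementary class contributes nothing inside $f^{-1}(V_n)$; so the nonempty open set $f^{-1}(V_n)\cap p^{-1}(W^*)$ (containing $x$) must meet $\bigcup_{y\in B_n^1}O_{n,y}$, which yields a good~$y$ and the contradiction of Step~5.
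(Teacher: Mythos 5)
Your reduction of the Hausdorff separation in $Y$ to a countable family $\{V_n:n\in\omega\}$ such that any two distinct points are separated by some pair $\bigl(V_n,\,Y\setminus\overline{V_n}\bigr)$ is correct, and your contradiction scheme would indeed go through if, for each $n$, you could produce a \emph{countable} set $B_n^1\subseteq f^{-1}(V_n)$ with $\bigcup_{y\in B_n^1}O_{n,y}$ dense in $f^{-1}(V_n)$. That is exactly where the argument breaks, and your proposed circumvention does not close the hole. Weak Lindel\"ofness of $X$ applies to open covers of $X$ and returns a countable subfamily whose union is dense in $X$; when you adjoin to $\{O_{n,y}:y\in f^{-1}(V_n)\}$ the neighborhoods of points of $X\setminus\overline{f^{-1}(V_n)}$ and of $\partial f^{-1}(V_n)$, the extracted countable subfamily may meet a given nonempty open set $G\subseteq f^{-1}(V_n)$ \emph{only} in members of the boundary class. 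You correctly observe that the sets contained in $X\setminus\overline{f^{-1}(V_n)}$ miss $G$, but you say nothing about the boundary class, whose members can perfectly well intersect $G$; for such a member $O_{n,z}$ the image $f(O_{n,z})$ may meet both $V_n$ and $Y\setminus\overline{V_n}$, and no contradiction arises. What you actually need is that the open subspace $f^{-1}(V_n)$ is weakly Lindel\"of for covers by sets open in $X$, and weak Lindel\"ofness is not inherited by open subspaces: the one-point Lindel\"ofication of a discrete set of cardinality $\omega_1$ is Lindel\"of, while its open discrete subspace of cardinality $\omega_1$ is not weakly Lindel\"of. Note that the oscillation argument of Lemma~\ref{Le:5.0} has no such defect, because there \emph{every} point of $X$ has a neighborhood on which $f$ oscillates by less than $1/n$; it is precisely the trichotomy $V_n$ / $Y\setminus\overline{V_n}$ / boundary, with its unusable middle case, that blocks the direct transplantation.

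The paper's proof sidesteps the issue entirely and is two lines long: a Hausdorff space $Y$ with a countable network admits a continuous one-to-one mapping $i\colon Y\to Z$ onto a second countable Hausdorff space (separate pairs of network elements by disjoint open sets and let these countably many open sets generate a coarser topology). Lemma~\ref{Le:5.0} already provides the factorization property, so there exist $p\in\mathcal{L}$ and a continuous $\varphi\colon p(X)\to Z$ with $i\circ f=\varphi\circ p$; then $h=i^{-1}\circ\varphi$ satisfies $f=h\circ p$ and is continuous because $p$, being a continuous open surjection onto $p(X)$, is a quotient mapping --- the same observation you invoke at the end of your own argument. I recommend this reduction over attempting to repair the direct adaptation.
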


\begin{proof}
The family $\mathcal{L}$ has the factorization property by Lemma~\ref{Le:5.0}. Let $i\colon Y\to Z$ 
be a continuous one-to-one mapping onto a Hausdorff space $Z$ of countable weight. Then $i\circ{f}$ is a continuous mapping of $X$ onto $Z$. Since $\mathcal{L}$ has the factorization property, we can find $p\in\mathcal{L}$ and a continuous mapping $\varphi\colon p(X)\to Z$ satisfying $i\circ{f}=\varphi\circ{p}$. Let $h=i^{-1}\circ\varphi$, $h\colon p(X)\to Y$. Since the mapping $p$ is continuous and open, it follows from the equality $f=h\circ{p}$ that $h$ is continuous. This proves the lemma. 
\end{proof}

\begin{lemma}\label{Le:wL}
Let a space $X$ have a strong $\sigma$-lattice of open mappings onto weakly Lindel\"of spaces. 
Then $X$ is also weakly Lindel\"of.
\end{lemma}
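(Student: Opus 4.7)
The plan is to use the weak Lindel\"of property of each $p(X)$ at successive stages of a recursion, take the limit via condition~(3) of a strong $\sigma$-lattice, and transfer density back to $X$ using openness of $p_\omega$. Let $\mathcal{U}$ be an open cover of $X$. Since $\mathcal{L}$ generates the topology of $X$, the sets of the form $p^{-1}(V)$ with $p\in\mathcal{L}$ and $V$ open in $p(X)$ form a base for $X$. Refining $\mathcal{U}$ by this base produces an open cover $\mathcal{W}$ of $X$ in which every $W$ is $p_W$-saturated for some $p_W\in\mathcal{L}$ and is contained in some element of $\mathcal{U}$. A countable subfamily of $\mathcal{W}$ with dense union immediately yields one for $\mathcal{U}$, so it suffices to work with $\mathcal{W}$.

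\textbf{Recursive construction.} Pick $p_0\in\mathcal{L}$ arbitrary. Given $p_n$, openness of $p_n$ makes $\{p_n(W):W\in\mathcal{W}\}$ an open cover of $p_n(X)$, and weak Lindel\"ofness of $p_n(X)$ yields a countable $\mathcal{W}_n\subseteq\mathcal{W}$ with $\bigcup_{W\in\mathcal{W}_n}p_n(W)$ dense in $p_n(X)$. Conditions~(2) and (3) together imply that every countable subfamily of $\mathcal{L}$ has a lower bound in $\mathcal{L}$: iterate~(2) to build a decreasing sequence in $\mathcal{L}$ below each member, and apply~(3) to its diagonal product. Let $p_{n+1}\in\mathcal{L}$ be a lower bound of the countable family $\{p_n\}\cup\{p_W:W\in\mathcal{W}_n\}$; then each $W\in\mathcal{W}_n$ is $p_m$-saturated for every $m\geq n+1$.

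\textbf{Limit and conclusion.} Let $p_\omega=\lim_{n\in\omega}p_n\in\mathcal{L}$, which is open by hypothesis, and set $\mathcal{W}_\omega=\bigcup_n\mathcal{W}_n$, a countable subfamily of $\mathcal{W}$. Every $W\in\mathcal{W}_\omega$ is $p_\omega$-saturated, so $\bigcup\mathcal{W}_\omega=p_\omega^{-1}\bigl(\bigcup_{W\in\mathcal{W}_\omega}p_\omega(W)\bigr)$. By condition~(3), $p_\omega(X)$ is canonically homeomorphic to the inverse limit of $\{p_n(X)\}$, so its topology has a base consisting of the sets $\pi_n^{-1}(V)$, where $\pi_n\colon p_\omega(X)\to p_n(X)$ is the canonical projection and $V$ is open in $p_n(X)$. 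For any such nonempty basic open set, stagewise density supplies $W\in\mathcal{W}_n$ and $u\in W$ with $p_n(u)\in V$, so that $p_\omega(u)\in\pi_n^{-1}(V)\cap p_\omega(W)$. Hence $\bigcup_{W\in\mathcal{W}_\omega}p_\omega(W)$ is dense in $p_\omega(X)$, and since $p_\omega$ is an open surjection the preimage $\bigcup\mathcal{W}_\omega$ is dense in $X$.

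The main obstacle is transferring density across the inverse limit: a countable family that is dense at each finite stage need not be dense once all stages are combined. Condition~(3) of the strong $\sigma$-lattice (strictly stronger than the weak version used in Section~\ref{R_pro-Lie}) is indispensable here, since it realizes $p_\omega(X)$ as a genuine inverse limit and provides the explicit base of cylindrical open sets through which stagewise density propagates to the limit.
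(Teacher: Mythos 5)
Your proof is correct and follows essentially the same route as the paper's: refine the cover to $\mathcal{L}$-saturated sets, recursively extract countable subfamilies using weak Lindel\"ofness of each $p_n(X)$ while passing to lower bounds in $\mathcal{L}$, and transfer density through the open limit map $p_\omega$. The only difference is cosmetic: you spell out the cylindrical-base argument for density in $p_\omega(X)$, which the paper leaves implicit.
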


\begin{proof}
Denote by $\mathcal{L}$ a strong $\sigma$-lattice for $X$ consisting of open mappings onto 
weakly Lindel\"of spaces. Let $\gamma$ be an open cover of $X$. Since $\mathcal{L}$ generates 
the topology of $X$, we can assume without loss of generality that every element $U\in\gamma$ 
has the form $U=p^{-1}(V)$ for some $p\in\mathcal{L}$ and an open set $V\subset p(X)$. 

Take an arbitrary element $p_0\in\mathcal{L}$. Since $p_0$ is open and the image $X_0=p_0(X)$ 
is weakly Lindel\"of, we can find a countable subfamily $\lambda_0$ of $\gamma$ such that $p_0(\bigcup\lambda_0)$ is dense in $X_0$. Assume that for some $n\in\omega$ we have defined elements $p_0,\ldots,p_n\in\mathcal{L}$ and countable subfamilies $\lambda_0,\ldots,\lambda_n$ 
of $\gamma$ satisfying the following conditions:
\begin{enumerate}
\item[(i)]   $p_n\prec\cdots\prec p_0$;
\item[(ii)]  $\lambda_0\subset \cdots\subset\lambda_n$; 
\item[(iii)] $p_k(\bigcup\lambda_k)$ is dense in $X_k$ for each $k\leq n$;
\item[(iv)] if $k<n$, then $U=p_n^{-1}p_n(U)$ for each $U\in \lambda_k$.
\end{enumerate}
Since $\lambda_n$ is countable, there exists an element $p_{n+1}\in\mathcal{L}$ such that 
$p_{n+1}\prec p_n$ and $U=p_{n+1}^{-1} p_{n+1}(U)$ for each $U\in\lambda_n$. Again, the space $X_{n+1}=p_{n+1}(X)$ is weakly Lindel\"of, so there exists a countable subfamily $\lambda_{n+1}$ 
of $\gamma$ such that $\lambda_n\subset\lambda_{n+1}$ and $p_{n+1}(\bigcup\lambda_{n+1})$ 
is dense in $X_{n+1}$. This finishes our construction of the sequences $\{p_n: n\in\omega\}\subset\mathcal{L}$ and $\{\lambda_n: n\in\omega\}$. 

Since $\mathcal{L}$ is a strong $\sigma$-lattice for $X$, the diagonal product of the family $\{p_n: n\in\omega\}$, say, $p$ is in $\mathcal{L}$. Clearly $\lambda=\bigcup_{n\in\omega}\gamma_n$ is a countable subfamily of $\gamma$. We claim that $\bigcup\lambda$ is dense in $X$. Indeed, it follows from (iv) that $U=p^{-1}p(U)$ for each $U\in\lambda$. Hence $\bigcup\lambda=p^{-1}p(\bigcup\lambda)$. Further, conditions (i)--(iii) imply that $p(\bigcup\lambda)$ is dense in $p(X)$. Since the mapping $p$ is open, we conclude that $\bigcup\lambda$ is dense in $X$. Therefore the space $X$ 
is weakly Lindel\"of.
\end{proof}

Assume that $f\colon X\to Y$ is a homeomorphism and $\mathcal{L}_X$ and $\mathcal{L}_Y$ are (weak or strong) $\sigma$-lattices of mappings for the spaces $X$ and $Y$, respectively. We say that 
$\mathcal{L}_X$ and $\mathcal{L}_Y$ are \emph{isomorphic} if one can find a bijection $\Phi\colon\mathcal{L}_X\to\mathcal{L}_Y$ and a family of mappings $\{f_p: p\in\mathcal{L}_X\}$, where $f_p\colon p(X)\to \Phi(p)(Y)$ is a homeomorphism such that the following diagram commutes, for each 
$p\in\mathcal{L}_X$.
\[
\xymatrix{X\ar@{>}[r]^{f} \ar@{>}[d]_{p} & Y
\ar@{>}[d]^{\Phi(p)}\\
p(X)\ar@{>}[r]^{f_p}  &  \Phi(p)(Y) }
\]
If this happens, we say that the homeomorphism $f$ is \emph{induced} by an isomorphism of the lattices $\mathcal{L}_X$ and $\mathcal{L}_Y$.

The following lemma describes a situation when a homeomorphism between two spaces with 
\lq{good\rq} $\sigma$-lattices of open mappings is generated by an isomorphism of cofinal 
$\sigma$-sublattices of the two lattices. 

\begin{lemma}\label{Le:6}
Let $f$ be a homeomorphism of a space $X$ onto $Y$. Let also $\mathcal{L}_X$ and
$\mathcal{L}_Y$ be a strong $\sigma$-lattice for $X$ and a weak $\sigma$-lattice for $Y$, respectively. If the lattices $\mathcal{L}_X$ and $\mathcal{L}_Y$ consist of open mappings onto spaces with a countable network, then the homeomorphism $f$ is induced by an isomorphism of cofinal strong $\sigma$-sublattices of $\mathcal{L}_X$ and $\mathcal{L}_Y$. In particular, 
$\mathcal{L}_Y$ contains a cofinal strong $\sigma$-lattice for $Y$ with the factorization property. 
\end{lemma}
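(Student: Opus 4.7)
The plan is to construct the desired sublattices by a back-and-forth procedure based on the factorization property. Since a space with a countable network is Lindel\"of and hence weakly Lindel\"of, $\mathcal{L}_X$ is a strong $\sigma$-lattice of open mappings onto weakly Lindel\"of spaces; so by Lemma~\ref{Le:wL}, $X$ is weakly Lindel\"of, and the homeomorphism $f$ transfers this to $Y$. Lemma~\ref{Le:5} then yields that both $\mathcal{L}_X$ and $\mathcal{L}_Y$ have the factorization property with respect to continuous mappings into Hausdorff spaces with a countable network.

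Call a pair $(p,q)\in\mathcal{L}_X\times\mathcal{L}_Y$ \emph{coupled} if $p$ and $q\circ f$ define the same equivalence relation on $X$. Since $p$ and $q$ are open surjections and hence quotient mappings, this is equivalent to the existence of a unique homeomorphism $f_p\colon p(X)\to q(Y)$ satisfying $q\circ f=f_p\circ p$. The key step of the proof is to show that every $(p_0,q_0)\in\mathcal{L}_X\times\mathcal{L}_Y$ can be refined to a coupled pair $(p_\omega,q_\omega)$ with $p_\omega\prec p_0$ and $q_\omega\prec q_0$. To build such a pair I would alternate factorizations: first find $q_1\in\mathcal{L}_Y$ with $q_1\prec p_0\circ f^{-1}$ and $q_1\prec q_0$ (using the factorization property and finite lower bounds in $\mathcal{L}_Y$), then $p_1\in\mathcal{L}_X$ with $p_1\prec q_1\circ f$ and $p_1\prec p_0$, and continue. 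This produces $\prec$-decreasing sequences $\{p_n\}\subset\mathcal{L}_X$ and $\{q_n\}\subset\mathcal{L}_Y$ interleaved by $p_{n+1}\prec q_n\circ f\prec p_n$. Taking $p_\omega=\lim_n p_n\in\mathcal{L}_X$ via condition~(3) of Definition~\ref{Def:2}, and using the weak $\sigma$-lattice condition on $\{q_n\}$ to extract $q_\omega\in\mathcal{L}_Y$ through which the diagonal product of the $q_n$ factors via a continuous bijection, one sees that the fibers of both $p_\omega$ and $q_\omega\circ f$ equal the intersection over $n$ of the fibers of $p_n$ (equivalently, of $q_n\circ f$), so the pair is coupled.

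Let $\mathcal{L}_X^*\subset\mathcal{L}_X$ consist of the first coordinates of coupled pairs, fix a choice $\Phi(p)\in\mathcal{L}_Y$ of partner for each $p\in\mathcal{L}_X^*$, and set $\mathcal{L}_Y^*=\Phi(\mathcal{L}_X^*)$, together with the induced homeomorphisms $f_p$. Cofinality of $\mathcal{L}_X^*$ in $\mathcal{L}_X$ is immediate from the coupling step applied to any $p_0$; the analogous argument on the $Y$-side gives cofinality of $\mathcal{L}_Y^*$ in $\mathcal{L}_Y$. Closure under finite lower bounds reduces to coupling a common lower bound in $\mathcal{L}_X$ of the two given elements. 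The main obstacle is verifying condition~(3) of Definition~\ref{Def:2} for $\mathcal{L}_X^*$: given a $\prec$-decreasing sequence $\{p_n\}\subset\mathcal{L}_X^*$, the partners $q_n=\Phi(p_n)$ form an automatically $\prec$-decreasing sequence in $\mathcal{L}_Y$, because the fibers of $q_n$ are transported by $f$ from those of $p_n$; I would then take $p_\omega=\lim_n p_n\in\mathcal{L}_X$ and apply the weak $\sigma$-lattice condition to $\{q_n\}$ to extract $q_\omega\in\mathcal{L}_Y$ whose fibers equal the intersection of fibers of the $q_n$, and the same fiber calculation as above gives that $(p_\omega,q_\omega)$ is coupled, so $p_\omega\in\mathcal{L}_X^*$. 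Finally, the ``In particular'' statement is immediate from Lemma~\ref{Le:5}, since $\mathcal{L}_Y^*$ is a strong $\sigma$-lattice of open mappings on the weakly Lindel\"of space $Y$ with images of countable network.
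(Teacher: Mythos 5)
Your proposal is correct and follows essentially the same route as the paper: both define coherent (your ``coupled'') pairs, obtain cofinality by the alternating factorization argument producing $p_{n+1}\prec q_n\circ f\prec p_n$, and close the family of such pairs under countable limits. Your fiber-based verification that limit pairs are coupled (valid because open continuous surjections are quotient maps) is a clean shortcut for the paper's explicit inverse-limit diagram chase; the one point you should spell out is that condition~(3) of Definition~\ref{Def:2} for $\mathcal{L}_Y^*$ --- in particular the inverse-limit surjectivity clause, which you verify only for $\mathcal{L}_X^*$ --- must be transferred from $\mathcal{L}_X$ through the coupling homeomorphisms, since $\mathcal{L}_Y$ by itself is only assumed to be a weak $\sigma$-lattice.
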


\begin{proof}
Clearly every space with a countable network is separable and, hence, weakly Lindel\"of. By 
Lemma~\ref{Le:wL}, the space $X$ is weakly Lindel\"of, and so is $Y$. Therefore Lemma~\ref{Le:5.0} implies that the lattices $\mathcal{L}_X$ and $\mathcal{L}_Y$ have the factorization property.

Let $p\in\mathcal{L}_X$ and $q\in\mathcal{L}_Y$. We call the pair $(p,q)$ \emph{coherent} if there 
exists a homeomorphism $h\colon p(X)\to q(Y)$ satisfying $q\circ{f}=h\circ{p}$. Assume that 
$(p_1,q_1)$ is another coherent pair such that $p_1\prec p$ and $q_1\prec q$, and let a homeomorphism $h_1\colon p_1(X)\to q_1(Y)$ satisfy $q_1\circ{f_1}=h_1\circ{p_1}$. According 
to our assumptions there exist continuous mappings $\varphi\colon p_1(X)\to p(X)$ and $\psi\colon q_1(Y)\to q(Y)$ such that $p=\varphi\circ p_1$ and $q=\psi\circ q_1$. One can easily verify that the mappings $h,h_1$ and $\varphi,\psi$ satisfy $h\circ\varphi=\psi\circ h_1$. This simple observation will be used in the sequel without mentioning.  

Consider a sequence $\{(p_n,q_n): n\in\omega\}$, where $p_n\in\mathcal{L}_X$, $q_n\in 
\mathcal{L}_Y$, the pair $(p_n,q_n)$ is coherent, and $p_{n+1}\prec p_n$, $q_{n+1}\prec q_n$
for each $n\in\omega$. Since $\mathcal{L}_X$ is a strong $\sigma$-lattice for $X$, the mapping $p_\omega=\lim_{n\in\omega} p_n$ is in $\mathcal{L}_X$. We can identify $p_\omega$ with the diagonal product of the family $\{p_n: n\in\omega\}$. Similarly, since $\mathcal{L}_Y$ is a weak $\sigma$-lattice for $Y$, there exists $q=w$-$\lim_{n\in\omega} q_n$ in $\mathcal{L}_Y$. Let us show that the pair 
$(p_\omega,q)$ is coherent. In the sequel we put $X_n=p_n(X)$ and $Y_n=q_n(Y)$, where 
$n\in\omega$. 

First, for every $n\in\omega$, take a homeomorphism $f_n\colon X_n\to Y_n$ witnessing the coherence of the pair $(p_n,q_n)$. Let $q_\omega$ be the diagonal product of the family $\{q_n: n\in\omega\}$. Then there exists a continuous one-to-one mapping $i\colon q(Y)\to q_\omega(Y)$ satisfying $i\circ{q}=q_\omega$. It will be shown below that $i$ is a homeomorphism of $q(Y)$ onto 
$Y_\omega$. 

For all $n,k\in\omega$ with $k<n$, let also $q^n_k\colon Y_n\to Y_k$ be a continuous mapping satisfying $q_k=q^n_k\circ{q_n}$. Denote by $Y_\omega$ the inverse limit of the sequence $\{Y_n,\ q^{n+1}_n: n\in\omega\}$. Then $q_\omega(Y)$ is a dense subspace of $Y_\omega$. For every 
$n\in\omega$, there exist continuous mappings $q^\omega_n\colon Y_\omega\to Y_n$ and $q^{n+1}_n\colon Y_{n+1}\to Y_n$ satisfying $q_n=q^\omega_n\circ q_\omega$ and $q^{n+1}_n\circ q_{n+1}=
q_n$. Similarly, there exist continuous mappings $p^\omega_n\colon X_\omega\to X_n$ and $p^{n+1}_n\colon X_{n+1}\to X_n$ satisfying $p_n=p^\omega_n\circ p_\omega$ and $p^{n+1}_n\circ p_{n+1}=
p_n$, where $X_\omega=p_\omega(X)$.

Let $f_\omega\colon X_\omega\to Y_\omega$ be a continuous mapping satisfying $q_n\circ f=f_n\circ p_n$ for each $n\in\omega$, i.e.~$f_\omega$ is the \emph{limit} of the sequence 
$\{f_n: n\in\omega\}$. It is easy to see that the equality $q_\omega\circ{f}=f_\omega\circ p_\omega$ holds. 
\[
\xymatrix{X\ar@{>}[r]^{f} \ar@{>}[d]_{p_\omega} & Y\ar@/^2.5pc/[dd]^{q_\omega}
\ar@{>}[d]^{q}\\
X_\omega\ar@{-->}[r]^{g} \ar@{>}[dr]_{f_\omega} &  q(Y) \ar@{>}[d]^{i}\\
 & Y_\omega}
\]
We claim that $q_\omega(Y)=Y_\omega$ or, equivalently, $i(q(Y))=Y_\omega$. Indeed, take an arbitrary point $y\in Y_\omega$. For every $n\in\omega$, let $y_n=q^\omega_n(y)$ and choose a 
point $x_n\in X_n$ with $f_n(x_n)=y_n$. It is easy to verify that $p^{n+1}_n(x_{n+1})=x_n$ for 
each $n\in\omega$. To see this, we note that 
$$
f_n(p^{n+1}_n(x_{n+1}))=q^{n+1}_n(f_{n+1}(x_{n+1}))=q^{n+1}_n(y_{n+1})=y_n,
$$
so we have the equality $f_n(p^{n+1}_n(x_{n+1}))=y_n=f_n(x_n)$. Since $f_n$ is a bijection of $X_n$ onto $Y_n$, we conclude that $p^{n+1}_n(x_{n+1})=x_n$, as claimed. As $\mathcal{L}_X$ is a strong 
$\sigma$-lattice for $X$, there exists $x\in X$ such that $p_n(x)=x_n$ for each $n\in\omega$. 
Let $z=f(x)$. Then $q_n(z)=q_n(f(x)) = f_n(p_n(x))=f_n(x_n)=y_n$ for each $n\in\omega$, whence it follows that $q_\omega(z)=y$. We have thus proved that $q_\omega$ maps $Y$ onto $Y_\omega$, 
so $i(q(Y))=Y_\omega$. This implies that $i$ is a continuous bijection. The equality $f_\omega\circ 
p_\omega=q_\omega\circ f$ also implies that $f_\omega$ maps $X_\omega$ onto $Y_\omega$.

Notice that $f_\omega$ is a homeomorphism between $X_\omega$ and $Y_\omega$ as the limit of the homeomorphisms $f_n$, with $n\in\omega$. Since $f,p_\omega,q$ are continuous open surjective mappings, so is $g=i^{-1}\circ f_\omega$. The equality $f_\omega=i\circ{g}$ and the fact that $i$ and 
$f_\omega$ are continuous bijections together imply that $g$ is also a bijection. Hence $g$ and $i$ are homeomorphisms. The equality $q\circ f=g\circ p_\omega$ proves that the pair $(p_\omega,q)$ is coherent, as claimed. In other words, the lattices $\mathcal{L}_X$ and $\mathcal{L}_Y$ are \lq{closed\rq} with respect to taking limits of sequences of coherent pairs of mappings. 

Finally we show that 
$$
\mathcal{R}=\{p\times q: p\in \mathcal{L}_X,\ q\in\mathcal{L}_Y,\ (p,q) \mbox{ is coherent}\}
$$
is a cofinal $\sigma$-sublattice of the product lattice 
$$
\mathcal{L}_X\times\mathcal{L}_Y=\{p\times q: p\in \mathcal{L}_X,\ q\in\mathcal{L}_Y\}.
$$ 
To this end, take arbitrary elements $p_0\in\mathcal{L}_X$ and $q_0\in\mathcal{L}_Y$. Our aim is to find $p_\omega\in\mathcal{L}_X$, $q_\omega\in\mathcal{L}_Y$, and a homeomorphism $f_\omega\colon p_\omega(X)\to q_\omega(Y)$ satisfying $p_\omega\prec p_0$, $q_\omega\prec q_0$, and $f_\omega\circ p_\omega=q_\omega\circ{f}$. Since the family $\mathcal{L}_X$ has the factorization property and $q_0\circ{f}$ is a continuous mapping of $X$ to the space $q_0(Y)$ with a countable network, we apply Lemma~\ref{Le:5} to find $p_1\in\mathcal{L}_X$ and a continuous mapping $\varphi_1\colon p_1(X)\to q_0(Y)$ such that $p_1\prec p_0$ and $q_0\circ{f}=\varphi_1\circ p_1$. Similarly, using the factorization property of $\mathcal{L}_Y$ and Lemma~\ref{Le:5}, we find $q_1\in\mathcal{L}_Y$ and a continuous mapping $\psi_1\colon q_1(Y)\to p_1(X)$ such that $q_1\prec q_0$ and $\psi_1\circ q_1\circ f=p_1$. Continuing this construction we define sequences $\{p_n: n\in\omega\}\subset\mathcal{L}_X$ and $\{q_n: n\in\omega\}\subset\mathcal{L}_Y$ such that 
$$
p_{n+1}\prec p_n,\ q_{n+1}\prec q_n,\, \mbox{ and }\,  p_{n+1}\prec  q_n\circ f\prec p_n
$$ 
for each $n\in\omega$. Let $\varphi_{n+1}\colon p_{n+1}(X)\to q_n(Y)$ and $\psi_n\colon q_n(Y)\to p_n(X)$ be continuous mappings such that $\psi_n\circ q_n\circ f = p_n$ and $q_n\circ f = \varphi_{n+1}\circ p_{n+1}$. Since $\mathcal{L}_X$ is a strong $\sigma$-lattice for $X$, there exists $p_\omega=\lim_{n\in\omega} p_n$ in $\mathcal{L}_X$ and, similarly, one can find $q=w$-$\lim_{n\in\omega} q_n$ in $\mathcal{L}_Y$. It remains to verify that the pair $(p_\omega,q)$ is coherent. 

For every $n\in\omega$, $p^\omega_n=p_n\circ p_\omega^{-1}$ is a continuous open mapping of $p_\omega(X)$ onto $p_n(X)$. Denote by $Y_\omega$ the limit space of the inverse sequence $\{q_n(Y),\ q^{n+1}_n: n\in\omega\}$, where $q^{n+1}_n=q_{n+1}^{-1}\circ q_n$ is a continuous open mapping of $q_{n+1}(Y)$ onto $q_n(Y)$. Let $q_\omega\colon Y\to Y_\omega$ be the diagonal product of the family $\{q_n: n\in\omega\}$. There exists a continuous one-to-one mapping $i\colon q(Y)\to Y_\omega$ satisfying $q_\omega=i\circ{q}$. Since $q_n$ is an onto mapping for each $n\in\omega$, the image $q_\omega(Y)$ is dense in $Y_\omega$. For every $n\in\omega$, let $q^\omega_n\colon Y_\omega\to Y_n$ be a continuous mapping satisfying $q_n=q^\omega_n\circ q_\omega$. 

Let $\varphi_\omega\colon p_\omega(X)\to Y_\omega$ be the limit of the mappings $\{\varphi_n: n\in\omega,\ n\geq 1\}$. Then $\varphi_\omega$ is continuous and satisfies the equality $\varphi_\omega\circ p_\omega=q_\omega\circ{f}$, so $\varphi_\omega(p_\omega(X))=q_\omega(Y)$ is a dense subspace of $Y_\omega$. Let also $\psi_\omega\colon Y_\omega\to p_\omega(X)$ be the limit of the mappings $\psi_n$ with $n\in\omega$. Clearly the equalities $q^\omega_n\circ\varphi_\omega = \varphi_{n+1}\circ p^\omega_{n+1}$ and $p^\omega_n\circ\psi_\omega = \psi_n\circ q^\omega_n$ hold.
Therefore the following diagram commutes.
\[
\xymatrix{X\ar@{>}[r]^{f} \ar@{>}[dd]_{p_\omega} & Y \ar@/^2.5pc/[dd]^{q_\omega} \ar@{>}[d]^{q}\\
 &  q(Y) \ar@{>}[d]^{i}\\
p_\omega(X) \ar@/^0.2pc/[r]^{\,\,\varphi_\omega} \ar@{-->}[ur]^{g} \ar@{>}[d]_{p^\omega_{n+1}} 
\ar@/_{2.8pc}/[dd]_{p^\omega_n}   
& Y_\omega \ar@{>}[d]^{q^\omega_{n+1}} \ar@/^2.5pc/[dd]^{q^\omega_n} 
\ar@/^0.2pc/[l]^{\,\,\psi_\omega} \\
p_{n+1}(X) \ar@{>}[dr]^{\varphi_{n+1}}  \ar@{>}[d]_{p^{n+1}_n} &  q_{n+1}(Y) \ar@{>}[l]_{\psi_{n+1}}  \ar@{>}[d]^{q^{n+1}_n}\\
p_n(X)   &   q_n(Y)  \ar@{>}[l]_{\psi_n}   }  
\]

Let us verify that $\varphi_\omega$ and $\psi_\omega$ are homeomorphisms. Making use of the commutativity of the above diagram, we obtain that
$$
p^\omega_n\circ \psi_\omega\circ\varphi_\omega = \psi_n\circ q^\omega_n\circ\varphi_\omega = p^\omega_n
$$
and, similarly,
$$
q^\omega_n\circ \varphi_\omega\circ\psi_\omega = \varphi_{n+1}\circ p^\omega_{n+1}\circ \psi_\omega = q^\omega_n
$$
for each $n\in\omega$. Since the families $\{p^\omega_n: n\in\omega\}$ and $\{q^\omega_n: n\in\omega\}$ generate the topologies of $p_\omega(X)$ and $Y_\omega$, respectively, we conclude that 
$\psi_\omega\circ\varphi_\omega$ and $\varphi_\omega\circ\psi_\omega$ are identity mappings. Hence both $\varphi_\omega$ and $\psi_\omega$ are surjective homeomorphisms. It also follows from $q_\omega(Y)=\varphi_\omega(p_\omega(X))$ that the mapping $q_\omega$ is surjective. Since $q_\omega=i\circ{q}$, the one-to-one mapping $i$ is surjective and, hence, bijective. Let $g=i^{-1}\circ\varphi_\omega$. As in the first part of the proof, we deduce form the equality $q\circ f=g\circ p_\omega$ that $g$ is continuous. Since $\varphi_\omega=i\circ{g}$ is a homeomorphism of $p_\omega(X)$ onto $Y_\omega$, it is clear that $i$ and $g$ are homeomorphisms. Therefore the mappings $p_\omega$ and $q$ are coherent. 

Since $p_\omega\prec p_0$ and $q\prec q_\omega\prec q_0$, we infer that the family $\mathcal{R}$ is cofinal in $\mathcal{L}_X\times\mathcal{L}_Y$. The first part of our proof shows that $\mathcal{R}$ is $\sigma$-closed in $\mathcal{L}_X\times\mathcal{L}_Y$. Hence the second \lq{coordinates\rq} of the coherent pairs $(p,q)$ form a cofinal $\sigma$-sublattice of $\mathcal{L}_Y$. This sublattice has the factorization property as a cofinal sublattice of $\mathcal{L}_Y$. 
\end{proof}

\begin{lemma}\label{Le:7}
Let a topological group $G$ be homeomorphic as a space to a product $X=\prod_{i\in I} X_i$ of spaces with a countable network. Then $G$ has a strong $\sigma$-lattice of continuous open homomorphisms onto topological groups with a countable network.
\end{lemma}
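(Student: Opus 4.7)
The plan is to apply Lemma~\ref{Le:6} to the given homeomorphism $f\colon X\to G$, pairing the natural product strong $\sigma$-lattice on $X$ with a weak $\sigma$-lattice on $G$ consisting of quotient homomorphisms modulo \emph{invariant} admissible subgroups. First I would check that $G$ is $\omega$-narrow: every space with a countable network is separable, an arbitrary product of separable spaces has the countable chain condition, so $X=\prod_{i\in I}X_i$ and therefore $G$ has ccc; since a topological group of countable cellularity is $\omega$-narrow (see \cite[Section~5.3]{AT}), so is $G$. Consequently every neighborhood of the identity in $G$ contains a closed invariant admissible subgroup, and the family $\mathcal{N}^*$ of all closed invariant admissible subgroups of $G$ is closed under countable intersections.

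Next, repeating the argument of Lemma~\ref{Le:1} but restricted to $\mathcal{N}^*$, the family $\mathcal{L}_G=\{\pi_N:N\in\mathcal{N}^*\}$ is a weak $\sigma$-lattice for $G$ whose members are continuous open surjective homomorphisms onto submetrizable topological groups $G/N$, and in particular each $G/N$ has a $G_\delta$-diagonal. Since each $X_i$ has a countable network and is therefore Lindel\"of $\Sigma$, $G$ is homeomorphic to a product of Lindel\"of $\Sigma$-spaces, so Lemma~\ref{Le:3} applies and every Hausdorff continuous image of $G$ with $G_\delta$-diagonal has a countable network; in particular each $G/N$ with $N\in\mathcal{N}^*$ does.

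On $X$ I would take the standard strong $\sigma$-lattice $\mathcal{L}_X=\{\pi_J:J\subseteq I,\ |J|\le\omega\}$ of projections onto countable subproducts: these are open surjections, and each image $X_J=\prod_{i\in J}X_i$ is a countable product of countable-network spaces and hence has a countable network. The pair $(\mathcal{L}_X,\mathcal{L}_G)$ and the homeomorphism $f\colon X\to G$ then meet all the hypotheses of Lemma~\ref{Le:6}, which yields that $f$ is induced by an isomorphism of cofinal strong $\sigma$-sublattices of $\mathcal{L}_X$ and $\mathcal{L}_G$. The corresponding cofinal strong $\sigma$-sublattice of $\mathcal{L}_G$ is precisely the required strong $\sigma$-lattice of continuous open homomorphisms of $G$ onto topological groups with a countable network.

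The main subtlety, and the reason one cannot simply quote Lemma~\ref{Le:1} directly, is the passage from arbitrary admissible subgroups to \emph{invariant} ones, so that each quotient $G/N$ is a topological group rather than merely a left coset space; this is exactly where the $\omega$-narrowness of $G$, derived from the ccc of the product $X$, is essential. Everything else is bookkeeping that arranges $\mathcal{L}_X$ and $\mathcal{L}_G$ so that Lemma~\ref{Le:6} may be invoked.
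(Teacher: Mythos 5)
Your proof is correct and follows essentially the same route as the paper: pair the standard strong $\sigma$-lattice of projections of $X$ onto countable subproducts with the weak $\sigma$-lattice of quotients of $G$ by admissible subgroups, use Lemma~\ref{Le:3} (via submetrizability from Lemma~\ref{Le:1}) to see that these quotients have countable networks, and invoke Lemma~\ref{Le:6}. Your additional step of restricting to \emph{invariant} admissible subgroups, justified by the $\omega$-narrowness of $G$ coming from the ccc of the product, is a worthwhile refinement rather than a deviation: the paper quotes Lemma~\ref{Le:1} verbatim and so literally produces open mappings onto left coset spaces, leaving the passage to genuine homomorphisms onto topological groups implicit, whereas you make it explicit.
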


\begin{proof}
Let $f\colon X\to G$ be a homeomorphism. For every non-empty set $J\subset I$, let 
$p_J\colon X\to X_J=\prod_{i\in J} X_i$ be the projection. Then 
$$ 
\mathcal{L}_X=\{p_J: J\subset I,\ |J|\leq\omega\}
$$
is a strong $\sigma$-lattice of open mappings for the space $X$. Notice that the space   
$X_J=p_J(X)$ has a countable network, for each countable set $J\subset I$.

Denote by $\mathcal{N}$ the family of admissible subgroups of the group $G$. According to 
Lemma~\ref{Le:1}, the family
$$
\mathcal{L}_G=\{\pi_N: N\in\mathcal{N}\}
$$
is a weak $\sigma$-lattice for $G$, where $\pi_N\colon G\to G/N$ is a quotient mapping onto the 
left coset space $G/N$. By Lemma~\ref{Le:1}, the space $G/N$ is submetrizable and, hence, has 
a $G_\delta$-diagonal. Applying Lemma~\ref{Le:3} we see that the space $G/N$ has a countable network, for each $N\in\mathcal{N}$. Therefore the required conclusion follows from 
Lemma~\ref{Le:6}.
\end{proof}

\begin{problem}\label{Prob:71}
Let a topological group $G$ be homeomorphic as a space to the product $H=\prod_{i\in I} H_i$ 
of Lindel\"of $\Sigma$-groups. Does $G$ have a strong $\sigma$-lattice of continuous open homomorphisms onto Lindel\"of $\Sigma$-groups?
\end{problem}

Let us recall that a space $X$ is said to be \emph{pseudo-$\omega_1$-compact} if every locally
finite family of open sets in $X$ is countable. It is clear that every weakly Lindel\"of space is 
pseudo-$\omega_1$-compact, while simple examples show that the converse is false. 

\begin{lemma}\label{Le:wL2}
Let $K$ be a compact subgroup of a topological group $G$. If the quotient space $G/K$ is 
weakly Lindel\"of (or pseudo-$\omega_1$-compact), so is $G$.
\end{lemma}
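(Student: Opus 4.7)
The plan is to exploit the fact that $\pi\colon G\to G/K$ is a continuous open surjection and, because $K$ is compact, also a closed map with compact fibers: for each closed $F\sub G$ the saturation $FK=\pi^{-1}(\pi(F))$ is the product of a closed set by a compact set in a topological group, hence closed. In both halves of the statement the strategy is to push the given family from $G$ down to $G/K$ along $\pi$ (using closedness and fiber compactness), apply the corresponding property in $G/K$, and then pull the resulting countable subfamily back (using openness of $\pi$).

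For the weakly Lindel\"of case, given an open cover $\gamma$ of $G$, one first uses compactness of each fiber $\pi^{-1}(y)$ to select a finite subfamily $\gamma_y\sub\gamma$ covering the fiber; set $U_y=\bigcup\gamma_y$ and $V_y=(G/K)\sm\pi(G\sm U_y)$. Closedness of $\pi$ makes $V_y$ an open neighborhood of $y$ in $G/K$ with $\pi^{-1}(V_y)\sub U_y$. Applying weak Lindel\"ofness of $G/K$ to $\{V_y:y\in G/K\}$ produces a countable $\{V_{y_n}:n\in\omega\}$ with $\bigcup_n V_{y_n}$ dense in $G/K$. Then $\lambda=\bigcup_n \gamma_{y_n}$ is a countable subfamily of $\gamma$ with $\bigcup\lambda\supseteq\pi^{-1}\big(\bigcup_n V_{y_n}\big)$, and the latter set is dense in $G$ because the preimage of a dense subset under a continuous open surjection is dense (any non-empty open $U\sub G$ has $\pi(U)$ open non-empty, which meets $\bigcup_n V_{y_n}$).

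For the pseudo-$\omega_1$-compact case, let $\{U_\alpha:\alpha\in A\}$ be a locally finite family of non-empty open sets in $G$. I would show $\{\pi(U_\alpha):\alpha\in A\}$ is a locally finite family of non-empty open sets in $G/K$. Fix $y\in G/K$; for each $x\in\pi^{-1}(y)$ pick an open neighborhood $W_x$ meeting only finitely many $U_\alpha$, extract by compactness a finite subcover $W_{x_1},\ldots,W_{x_n}$ of $\pi^{-1}(y)$, and let $W=\bigcup_i W_{x_i}$, which still meets only finitely many $U_\alpha$. As before, $V=(G/K)\sm\pi(G\sm W)$ is an open neighborhood of $y$ with $\pi^{-1}(V)\sub W$; if $V\cap\pi(U_\alpha)\neq\emp$ then $U_\alpha$ meets $\pi^{-1}(V)\sub W$, so this occurs for only finitely many $\alpha$. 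Pseudo-$\omega_1$-compactness of $G/K$ then forces $|A|\leq\omega$.

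No serious obstacle arises; the argument is essentially the standard verification that these covering/chain-condition properties are preserved under open perfect surjections. The only nuance is the density transfer in the first part, which genuinely needs openness of $\pi$ (closedness alone would not suffice), while the compactness of $K$ is used twice, once for fiber compactness and once for closedness of $\pi$.
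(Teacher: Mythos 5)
Your proof is correct and follows essentially the same route as the paper's: use compactness of the fibers and closedness of $\pi$ to push the cover (resp.\ the locally finite family) down to $G/K$, apply the hypothesis there, and use openness of $\pi$ to transfer density back up. The only cosmetic difference is in the pseudo-$\omega_1$-compact half, where you verify local finiteness of the pushed-down family directly while the paper argues by contradiction via an accumulation point of the image family and perfectness of $\pi$; both amount to the same preservation argument.
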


\begin{proof}
Assume that $G/K$ is weakly Lindel\"of and let $\gamma$ be an open cover of $G$. Denote by $\pi$ the quotient mapping of $G$ onto the left coset space $G/K$. Since each fiber of $\pi$ is compact, for every $y\in G/K$ there exists a finite subfamily $\mu_y\subset\gamma$ such that $\pi^{-1}(y)\subset\bigcup\mu_y$. The mapping $\pi$ is closed, so for every $y\in G/K$ we can find an open neighborhood $U_y$ of $y$ in $G/K$ such that $\pi^{-1}(U_y)\subset\bigcup\mu_y$. Since the space $G/K$ is weakly Lindel\"of, the open cover $\{U_y: y\in G/K \}$ of $G/K$ contains a countable subfamily whose union is dense in $G/K$. Let $\{U_y: y\in C\}$ be such a subfamily, where the set 
$C\subset G/K$ is countable. We claim that the union of the countable family $\mu=\bigcup_{y\in C} \mu_y\subset\gamma$ is dense in $G$. 

Indeed, it follows from our choice of the set $C$ that $O=\bigcup_{y\in C} U_y$ is dense in $G/K$. 
Since $\pi$ is a continuous open mapping, the set $\pi^{-1}(O)$ is dense in $G$. It remains to note
that $\pi^{-1}(O)\subset \bigcup\mu$, so the set $\bigcup\mu$ is dense in $G$. Since $\mu$ is a
countable subfamily of $\gamma$, we conclude that $G$ is weakly Lindel\"of.\smallskip

Finally, let the space $G/K$ be pseudo-$\omega_1$-compact and suppose for a contradiction that 
$\gamma$ is an uncountable locally finite family of open sets in $G$. Since $K$ is compact, each coset $xK$ in $G$ meets at most finitely many elements of $\gamma$. Hence the family $\lambda=
\{\pi(U): U\in\gamma\}$ is also uncountable. This fact and the pseudo-$\omega_1$-compactness of $G/K$ together imply that the family $\lambda$ accumulates at some point $y\in G/K$. Since the quotient mapping $\pi\colon G\to G/K$ is perfect, the family $\gamma$ must accumulate at a 
point $x\in G$ with $\pi(x)=y$, which is a contradiction. So the space $G$ has to be 
pseudo-$\omega_1$-compact.
\end{proof}

The following lemma is a special case of \cite[Proposition~3.4]{Ar80}. It will be used in the proof of
Theorem~\ref{Th:Ext}.

\begin{lemma}\label{Le:CM}
If $C$ is a compact subspace of a topological group with countable pseudocharacter, then
$C$ has a countable base.
\end{lemma}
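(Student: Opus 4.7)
The plan is to reduce the statement to a classical metrization theorem for compact Hausdorff spaces. A compact metrizable space has a countable base, so it suffices to prove that $C$ is metrizable. By the classical theorem of \v{S}ne\u{\i}der, every compact Hausdorff space with a $G_\delta$-diagonal is metrizable, so the task reduces to checking that the diagonal $\Delta_C = \{(x,x): x\in C\}$ is a $G_\delta$-subset of $C\times C$.

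For this, I would exploit the countable pseudocharacter of the ambient group $G$ together with the continuity of the group operation. Countable pseudocharacter means $\{e\}=\bigcap_{n\in\omega} U_n$ for some sequence of open neighborhoods $U_n$ of the identity $e$ in $G$. Consider the continuous map $\mu\colon G\times G\to G$ defined by $\mu(x,y)=x^{-1}y$. Then
\[
\Delta_G = \mu^{-1}(e) = \bigcap_{n\in\omega} \mu^{-1}(U_n),
\]
exhibiting $\Delta_G$ as a $G_\delta$-set in $G\times G$. Intersecting with the subspace $C\times C$ gives $\Delta_C = \Delta_G\cap (C\times C)$, which is therefore a $G_\delta$-set in $C\times C$.

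Combining the two ingredients, $C$ is compact Hausdorff with a $G_\delta$-diagonal, hence metrizable by \v{S}ne\u{\i}der's theorem, and being compact it has a countable base. There is essentially no real obstacle: the entire proof rests on the continuity of $(x,y)\mapsto x^{-1}y$ plus a citation of the standard metrization theorem; the only minor decision is whether to cite \v{S}ne\u{\i}der's result directly or to route through the equivalence \emph{compact $+$ $G_\delta$-diagonal $\Rightarrow$ submetrizable $+$ compact $\Rightarrow$ metrizable}, and I would opt for the direct citation for brevity.
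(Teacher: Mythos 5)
Your argument is correct and complete. The only thing to compare it against is that the paper does not prove this lemma at all: it simply records it as a special case of \cite[Proposition~3.4]{Ar80} (Arhangel'skii's general relations among cardinal invariants of topological groups and their subspaces). Your route is the standard self-contained one: homogeneity reduces countable pseudocharacter to $\{e\}$ being a $G_\delta$, the continuity of $(x,y)\mapsto x^{-1}y$ pulls this back to exhibit $\Delta_G$ as a $G_\delta$ in $G\times G$, hence $\Delta_C$ is a $G_\delta$ in $C\times C$, and \v{S}ne\u{\i}der's theorem (compact Hausdorff $+$ $G_\delta$-diagonal $\Rightarrow$ metrizable) finishes, since the blanket Hausdorff assumption of the paper makes $C$ compact Hausdorff. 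What your approach buys is transparency and independence from the external reference; what the citation buys is brevity and access to a more general statement. Either way the lemma stands; just make sure the reference you give for \v{S}ne\u{\i}der's theorem is to the compact (or countably compact) Hausdorff version, since a $G_\delta$-diagonal does not imply metrizability without compactness.
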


We also need another auxiliary fact established in the proof of \cite[Theorem~1.5.20]{AT}.
It explains, informally speaking, to what extent one can restore the topology of a given group
$G$ in terms of a subgroup $K$ of $G$ and the quotient space $G/K$.

\begin{lemma}\label{Le:AT}
Let $K$ be a closed subgroup of a topological group $G$ with identity $e$ and $\pi\colon G\to G/K$
be the quotient mapping onto the left coset space $G/K$. Assume that symmetric open neighborhoods $O,\,V,\,W,\,W',\,U$ of $e$ in $G$ satisfy $V^2\subset O$, $W\cap K\subset V$, $(W')^2\subset W$, and $\pi(U)\subset \pi(V\cap W')$. Then $W'\cap U\subset O$.
\end{lemma}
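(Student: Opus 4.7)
The plan is a direct element chase: take an arbitrary $x \in W' \cap U$ and, using the coset condition $\pi(U) \subset \pi(V \cap W')$, produce a nearby witness $y \in V \cap W'$ with the same $K$-coset as $x$, then argue that $y^{-1}x$ lies simultaneously in $W$ (via the symmetry hypotheses) and in $K$ (by construction), so by $W \cap K \subset V$ it lies in $V$. Finally writing $x = y \cdot (y^{-1}x)$ places $x$ in $V^2 \subset O$.

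More precisely, I would proceed in three short steps. First, since $\pi(x) \in \pi(U) \subset \pi(V \cap W')$, pick $y \in V \cap W'$ with $\pi(y) = \pi(x)$; this means $xK = yK$, hence $y^{-1}x \in K$. Second, observe that both $x$ and $y$ lie in the symmetric set $W'$, so $y^{-1} \in W'$ and therefore $y^{-1}x \in (W')^2 \subset W$. Combining this with $y^{-1}x \in K$ and the hypothesis $W \cap K \subset V$, we get $y^{-1}x \in V$. Third, since $y \in V$ as well,
\[
x \;=\; y \cdot (y^{-1}x) \;\in\; V \cdot V \;=\; V^{2} \;\subset\; O,
\]
which is the desired conclusion.

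There is no real obstacle here; the lemma is purely a bookkeeping exercise in which neighborhood is used for what. The only point requiring care is to invoke symmetry of $W'$ at the correct moment (so that $y^{-1} \in W'$, allowing $y^{-1}x \in (W')^2$), and to respect the multiplicative order $y^{-1}x$ when passing through the coset map (one must not accidentally write $xy^{-1}$, which would need $K$ to be normal or $W$ to be conjugation invariant). The roles of $O$, $U$, and the fact that $U$ itself is symmetric are not needed beyond choosing $x \in U$ so that $\pi(x) \in \pi(U)$.
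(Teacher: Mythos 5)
Your argument is correct: the element chase $x=y\cdot(y^{-1}x)$ with $y^{-1}x\in (W')^2\cap K\subset W\cap K\subset V$ and $y\in V$ gives exactly $x\in V^2\subset O$, and you use the symmetry of $W'$ and the left-coset condition in the right places. The paper itself gives no proof of this lemma (it cites the proof of Theorem~1.5.20 in the Arhangel'skii--Tkachenko book), and your argument is precisely the standard one extracted from there.
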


The next theorem is the main result of this section.

\begin{theorem}\label{Th:Ext}
Let $K$ be a compact invariant subgroup of a topological group $G$. If the quotient group $G/K$ 
has a strong $\sigma$-lattice of open homomorphisms onto groups with a countable network (base), then $G$ also has a strong $\sigma$-lattice of open homomorphisms onto groups 
with a countable network (base). 
\end{theorem}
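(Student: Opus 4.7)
I would build the desired strong $\sigma$-lattice $\mathcal{L}$ for $G$ from quotient homomorphisms $\pi_M : G \to G/M$ as $M$ ranges over a suitably rich collection of closed normal subgroups of $G$. The ingredients are the lifted family $\{\tilde\varphi := \varphi \circ \pi : G \to H_\varphi\}_{\varphi \in \mathcal{M}}$, where $\pi : G \to G/K$ is the quotient homomorphism, $\mathcal{M}$ is the given strong $\sigma$-lattice on $G/K$, and $M_\varphi := \ker\tilde\varphi = \pi^{-1}(\ker\varphi) \supseteq K$, combined with admissible subgroups of $G$ produced using Lemma~\ref{Le:AT}.

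The core construction takes an open neighborhood $O$ of $e$ in $G$, shrinks it to a symmetric $V$ with $V^2 \subseteq O$, uses compactness of $K$ to select a symmetric $W \ni e$ with $W \cap K \subseteq V$ and a symmetric $W' \ni e$ with $(W')^2 \subseteq W$, and then --- since $\mathcal{M}$ generates the topology of $G/K$ --- selects $\varphi \in \mathcal{M}$ and an open $V_\varphi \ni e$ in $H_\varphi$ with $\varphi^{-1}(V_\varphi) \subseteq \pi(V \cap W')$. Setting $U := \tilde\varphi^{-1}(V_\varphi)$, Lemma~\ref{Le:AT} yields $W' \cap U \subseteq O$. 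I would iterate this with a decreasing sequence of symmetric opens $(W'_n)_{n \in \omega}$ satisfying $(W'_{n+1})^3 \subseteq W'_n$ together with a decreasing sequence $(\varphi_n)_{n \in \omega}$ in $\mathcal{M}$, to obtain an admissible subgroup $N := \bigcap_n W'_n$ of $G$ contained in $O$, together with $\varphi_\infty := \lim_n \varphi_n \in \mathcal{M}$ (from the strong $\sigma$-lattice property) satisfying $\pi(N) \subseteq \ker\varphi_\infty$. Letting $M$ be an appropriate closed normal subgroup of $G$ contained in $N$ with $M \cap K = N \cap K$, the quotient $G/M$ admits a perfect surjection onto $H_{\varphi_\infty}$ with compact fibers homeomorphic to $K/(N \cap K)$; by Lemma~\ref{Le:CM} applied to the compact subspace $K/(N \cap K)$ of the submetrizable quotient $G/N$ (submetrizability from Lemma~\ref{Le:1}), this fiber has countable base. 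Hence $G/M$ has countable network (respectively base, in the corresponding case) as a perfect preimage.

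Collecting these $\pi_M$'s and closing under finite diagonal products and countable inverse limits, I would verify the axioms of Definition~\ref{Def:2} for the resulting $\mathcal{L}$: axiom~(1) is the content of the construction above; axiom~(2) follows from closure of countable-network groups under finite products and closed subgroups, and from the fact that the corresponding $M$'s can be intersected pairwise while preserving the quotient structure; axiom~(3), the coherent-lifting property, follows from the strong $\sigma$-lattice property of $\mathcal{M}$ together with Tychonoff applied to the compact fibers $K/(M_n \cap K)$, which guarantees that any coherent sequence lifts along the $K$-direction.

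The main obstacle is producing $M$ satisfying three potentially incompatible demands simultaneously: $M \subseteq O$, $M$ is normal in $G$, and $G/M$ has countable network. An admissible subgroup $N$ of $G$ is closed but generally not normal; passing to its $G$-conjugation normal core may shrink $N$ so much that the intersection with $K$ becomes too small to control the fibers, spoiling the countable-network property. The resolution is to build conjugation-invariance into the iteration itself: at each stage one replaces $W'_n$ by the intersection of its $K$-conjugates, which is still open because $K$ is compact and the conjugation map $K \times G \to G$ is continuous, thereby ensuring $N \cap K$ is already $K$-invariant in $K$, and then handles the residual $G/K$-conjugation issue by exploiting that $\mathcal{M}$'s kernels $\ker\varphi_n$ are already $G/K$-invariant. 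This delicate coordination between Lemma~\ref{Le:AT}, compactness of $K$, and the strong lattice structure of $\mathcal{M}$ is where the bulk of the technical work lives.
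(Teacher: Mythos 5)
Your overall strategy is the paper's: quotient $G$ by a family of suitable invariant admissible subgroups, control the compact kernel $MK/M\cong K/(K\cap M)$ via Lemma~\ref{Le:CM}, and use the strong lattice on $G/K$ to run the limit arguments. But the step you yourself flag as the main obstacle --- producing a subgroup $M$ that is simultaneously small, \emph{normal in $G$}, and such that $G/M$ has a countable network --- is not actually resolved by your fix. Intersecting the $K$-conjugates of the $W_n'$ only yields $K$-invariance of $N$, and since $K$ is normal we get $gNg^{-1}\cap K=g(N\cap K)g^{-1}$ for $g\in G$, so the normal core of $N$ in $G$ can still destroy $N\cap K$ unless $N\cap K$ is invariant under all of $G$, not just under $K$; the $G$-invariance of $\ker\varphi_n$ in $G/K$ controls only the part of $M$ above $K$ and says nothing about the part inside $K$. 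The paper sidesteps this entirely: it first shows $G/K$ is weakly Lindel\"of (Lemma~\ref{Le:wL}), lifts this to $G$ (Lemma~\ref{Le:wL2}), concludes that $G$ is $\omega$-narrow, and then invokes the standard fact that every neighborhood of the identity in an $\omega$-narrow group contains an \emph{invariant} admissible subgroup. That $\omega$-narrowness step is the missing ingredient in your argument.

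Two further points. First, ``$G/M$ has countable network as a perfect preimage'' is not a valid general principle: the double arrow space is a perfect preimage of $[0,1]$ with finite fibers and has no countable network. What saves you is that your map $G/M\to (G/K)/\pi(M)$ is a group homomorphism with compact metrizable kernel, so the Choban--Uspenskij extension theorem (an extension of a group with countable network by a group with countable base has countable network) applies; the paper cites exactly this. Second, your verification of axiom~(3) is only half an argument: the Tychonoff/compact-fiber reasoning gives surjectivity of the canonical map from $G/N$ onto the inverse limit of the $G/N_k$, but you must also show that the inverse limit topology coincides with the quotient topology on $G/N$ --- this is where the paper actually deploys Lemma~\ref{Le:AT} (together with the observation that the maps $p_k$ separate points of the compact set $p(K)$ and hence generate its topology), whereas you spend Lemma~\ref{Le:AT} on axiom~(1), where the paper's route through invariant admissible subgroups is simpler and does not need it.
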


\begin{proof}
We consider only the case when the group $H=G/K$ has a strong $\sigma$-lattice of open homomorphisms onto groups with a countable network, leaving to the reader simple modifications 
of the argument in the case of groups with a countable base. 

Denote by $\mathcal{M}$ a strong $\sigma$-lattice of open homomorphisms of $H$ onto groups 
with a countable network. Let
$$
\mathcal{L}=\{\ker{g}: g\in\mathcal{M}\}.
$$
Since $g(H)$ is topological group with a countable network for each $g\in\mathcal{M}$, every 
element of $\mathcal{L}$ is an invariant admissible subgroup of $G/K$. Let $\pi$ be the quotient homomorphism of $G$ onto $H$. Let also $\mathcal{A}$ be the family of invariant admissible subgroups of $G$. Note that by Lemma~\ref{Le:Adm}, $\pi(N)$ is an admissible (and invariant) subgroup of $H$, for each $N\in\mathcal{A}$.\smallskip 

\noindent \textbf{Claim~A.} \textit{For every $P\in\mathcal{A}$ and every $L\in\mathcal{L}$ with $L\subset \pi(P)$, there exists $N\in\mathcal{A}$ such that $N\subset P$ and $\pi(N)=L$.}\smallskip 

Indeed, it suffices to put $N=P\cap \pi^{-1}(L)$. Then $N$ is an invariant admissible subgroup of $G$ as the intersection of the invariant admissible subgroups $P$ and $\pi^{-1}(L)$ of $G$, so $N\in\mathcal{A}$. It is also clear that $\pi(N)=\pi(P\cap \pi^{-1}(L))=\pi(P)\cap L=L$. This proves our claim.\smallskip 

We define a subfamily $\mathcal{A}^*$ of $\mathcal{A}$ by letting
$$
\mathcal{A}^* = \{N\in\mathcal{A}: \pi(N)\in\mathcal{L}\}.
$$
For every $N\in\mathcal{A}^*$, let $\varphi_N\colon G\to G/N$ be the quotient homomorphism.
Let us show that $\mathcal{F}=\{\varphi_N: N\in\mathcal{A}^*\}$ is a strong $\sigma$-lattice of open homomorphisms of $G$ onto topological groups with a countable network. This requires several steps.

Every space with a countable network is separable and, hence, weakly Lindel\"of. So by 
Lemma~\ref{Le:wL}, the group $H=G/K$ is weakly Lindel\"of. Applying Lemma~\ref{Le:wL2} 
we conclude that the group $G$ is also weakly Lindel\"of. Hence $G$ is $\omega$-narrow 
(see \cite[Corollary~5.2.9]{AT}).\smallskip

Step~I. The group $G/N$ has a countable network, for each $N\in\mathcal{A}^*$. Indeed, 
take an arbitrary element $N\in\mathcal{A^*}$. Since the subgroup $K$ of $G$ is compact and invariant, $NK$ is a closed invariant subgroup of $G$. Let $p\colon G\to G/NK$ be the quotient 
homomorphism. Since $N\subset NK$, there exists a continuous homomorphism $h\colon G/N\to 
G/NK$ satisfying $p=h\circ\varphi_N$. It is clear that $\varphi_N(K)$ is the kernel of $h$. We claim 
that the compact group $\varphi_N(K)$ has a countable base. First, the quotient group $G/N$ has
countable pseudocharacter because $N$ is an admissible subgroup of $G$ \cite[Lemma~5.5.2\,a)]{AT}. Second, every compact subset of a topological group of countable pseudocharacter has a countable base by Lemma~\ref{Le:CM}, whence our claim follows. Thus the group $G/N$ contains 
the closed subgroup $\varphi_N(K)$ with a countable base such that the group $G/NK\cong (G/N)/\varphi_N(K)\cong (G/K)/\pi(N)$ has a countable network (notice that $\pi(N)\in\mathcal{L}$ since $N\in\mathcal{A}^*$). It now follows from \cite{Usp} (the result attributed to M.~Choban) that $G/N$ has 
a countable network.\smallskip

Step~II. The family $\mathcal{F}$ generates the original topology of  $G$. Since the group 
$G$ is $\omega$-narrow, it follows from \cite[Corollary~3.4.19]{AT} that every neighborhood of 
the identity in $G$ contains an invariant admissible subgroup, i.e.~an element of $\mathcal{A}$. 
We need a slightly stronger property of the family $\mathcal{L}$:
\smallskip

\textbf{Claim~B.} \textit{Every $G_\delta$-set $P$ in $H$ with $e_H\in P$ contains an element of 
$\mathcal{L}$, where $e_H$ is the identity of $H$.}\smallskip

Indeed, take a countable family $\{V_n: n\in\omega\}$ of open neighborhoods of $e_H$ in $H$ such that $P=\bigcap_{n\in\omega} V_n$. Since $\mathcal{M}$ is a strong $\sigma$-lattice for $H$, we can find, for every $n\in\omega$, an element $L_n\in\mathcal{L}$ such that $L_n\subset V_n$. Take
$g_n\in\mathcal{M}$ with $\ker g_n=L_n$, where $n\in\omega$. Making use of the fact that 
$\mathcal{M}$ is a strong $\sigma$-lattice for $H$ once again, we choose $g\in\mathcal{M}$ with
$g\prec g_n$ for each $n\in\omega$. Then $L=\ker{g}\in\mathcal{L}$ and $L\subset \bigcap_{n\in\omega} L_n\subset \bigcap_{n\in\omega} V_n = P$. This proves our claim.\smallskip

Let $U$ and $V$ be open neighborhoods of the identity in $G$ such that $V^2\subset U$. It suffices to show that $V$ contains an element $N$ of the family $\mathcal{A}^*$\,---\,then the open neighborhood 
$\varphi_N(V)$ of the identity in $\varphi_N(G)$ satisfies $\varphi_N^{-1}\varphi_N(V) =
VN\subset V^2\subset U$. Hence we take an arbitrary element $P\in\mathcal{A}$ with $P\subset V$.
Then $\pi(P)$ is an admissible subgroup of $H$ and, hence, a $G_\delta$-set in $H$. By Claim~B, there exists an element $L\in\mathcal{L}$ with $L\subset \pi(P)$, so we apply Claim~A to find $N\in\mathcal{A}$ such that $N\subset P$ and $\pi(N)=L$. Then $N\in\mathcal{A}^*$ and clearly $N\subset P\subset V$. We have thus proved that the family $\mathcal{F}$ generates the original topology of $G$. \smallskip

Step~III. The family $\mathcal{F}$ is a strong $\sigma$-lattice for $G$. First, take an arbitrary decreasing sequence $N_0\supset N_1\supset\cdots\supset N_i\supset\cdots$ of elements of 
$\mathcal{A}^*$. We claim that $N=\bigcap_{i\in\omega} N_i$ is an element of $\mathcal{A}^*$
as well. Indeed, it is clear that $N\in\mathcal{A}$. It follows from the definition of the family $\mathcal{A}^*$ that $L_i=\pi(N_i)\in\mathcal{L}$, and clearly $L_{i+1}\subset L_i$, for each $i\in\omega$. Since  
$\mathcal{M}$ is a strong $\sigma$-lattice for $H$, we see that $L=\bigcap_{i\in\omega} L_i$ is in 
$\mathcal{L}$. Further, the compactness of the subgroup $K$ of $G$ implies that $\pi(N)=L$. This proves that $N\in\mathcal{A}^*$.

Let $\varphi\colon G\to G/N$ and $\varphi_i\colon G\to G/N_i$ be quotient homomorphisms, 
where $i\in\omega$. For every $i\in\omega$, there exists an open continuous homomorphism 
$p_i\colon G/N\to G/N_i$ satisfying $\varphi_i=p_i\circ\varphi$. For integers $i,j$ with $0\leq i<j$,
let also $p_{j,i}\colon G/N_j\to G/N_i$ be an open continuous homomorphism satisfying $p_i=p_{j,i}
\circ p_j$. Similarly, for every $i\in\omega$, let $h_i\colon G\to G/N_iK$ and $\psi_i\colon H\to H/L_i
\cong G/N_iK$ be quotient homomorphisms satisfying $h_i=\psi_i\circ\pi$. Let also $\varpi\colon G/N\to 
H/L$ and $\varpi_i\colon G/N_i\to H/L_i$ be continuous homomorphisms satisfying $\varpi\circ\varphi= \psi\circ\pi$ and $\varpi_i\circ\varphi_i=\psi_i\circ\pi$, where $\psi\colon H\to H/L$ is the quotient homomorphism. Denote by $q_i$ a continuous homomorphism of $H/L$ to $H/L_i$ satisfying $\psi_i=q_i\circ\psi$.  
\[
\xymatrix{G \ar@{>}[r]^\pi \ar@{>}[d]_\varphi \ar@/_2.5pc/[dd]_{\varphi_i}  & H \ar@/^2.5pc/[dd]^{\psi_i}
\ar@{>}[d]^\psi\\
G/N \ar@{>}[r]^\varpi \ar@{>}[d]_{p_i}  &  H/L \ar@{>}[d]^{q_i}\\
G/N_i \ar@{>}[r]^{\varpi_i} & H/L_i}
\]
It is clear that all homomorphisms in the above diagram are open. 

To finish the proof of the statement in Step~III (and of the theorem) it suffices to verify that 
the limit of the inverse sequence 
$$
\{G/N_k,\, p_{l,k}: k,l\in\omega,\ k<l\}
$$ 
is topologically isomorphic to the group $G/N$. Let $O$ be an arbitrary open symmetric neighborhood of the identity $e_*$ in $G/N$. There exists an open symmetric neighborhood $V$ of $e_*$ in $G/N$ such that $V^2\subset O$. It is easy to find an integer $i\in\omega$ and an open symmetric neighborhood $W_i$ of the identity in $G/N_i$ such that $p_i^{-1}(W_i)\cap p(K)\subset V$. Indeed, 
$p(K)$ is a compact subspace of $G/N$. Let $y$ be an arbitrary element of $p(K)$ distinct from 
$e_*$. Choose $x\in K$ with $p(x)=y$. There exists $i\in\omega$ such that $p_i(x)$ is distinct from 
the identity of $G/N_i$\,---\,otherwise $x\in\bigcap_{i\in\omega} N_i=N$ and, hence, $y=p(x)=e_*$, which contradicts our choice of $y$. In other words, we see that the family $\{p_k: k\in\omega\}$ of 
continuous homomorphisms of $G/N$ separates points of the compact group $p(K)$. Therefore 
this family generates the same topology on $p(K)$ that $p(K)$ inherits from the group $G/N$. 
In particular, this implies the existence of the required $i\in\omega$ and $W_i\subset G/N_i$. 
We choose an open symmetric neighborhood $W_{i+1}$ of the identity in $G/N_i$ such that
$W_{i+1}^2\subset W_i$.

Further, choose $k\in\omega$ with $k>i$ and an open symmetric neighborhood $U_k$ of the identity in
the group $H/L_k$ such that $q_k^{-1}(U_k) \subset \varpi(V\cap p_i^{-1}(W_{i+1}))$. This is possible 
since $\mathcal{M}$ is a $\sigma$-lattice for $H$ and $L=\bigcap_{n\in\omega} L_n$, where 
$L_n\in\mathcal{L}$ for each $n\in\omega$. 

We claim that the open neighborhood $W^*=p_{k,i}^{-1}(W_{i+1})\cap \varpi_k^{-1}(U_k)$ of the identity
in $G/N_k$ satisfies $p_k^{-1}(W^*)\subset O$, which shows that the family $\{p_n: n\in\omega\}$
of continuous homomorphisms of the group $G/N$ generates the original quotient topology of
$G/N$. Indeed, it follows from the commutativity of the above diagram that $p_k^{-1}(W^*)=
p_i^{-1}(W_{i+1})\cap \varpi^{-1}(q_k^{-1}(U_k))$. According to our choice of the sets $W_i$, 
$W_{i+1}$ and $U_k$ we have $W_{i+1}^2\subset W_i$, $p_i^{-1}(W_i)\cap p(K)\subset V$, and $q_k^{-1}(U_k)\subset \varpi(V\cap p_i^{-1}(W_{i+1}))$. Therefore we can apply Lemma~\ref{Le:AT}
with $U=\varpi^{-1}(q_k^{-1}(U_k))$, $W=p_i^{-1}(W_i)$, and $W'=p_i^{-1}(W_{i+1})$ to conclude that
$$
p_k^{-1}(W^*) =  p_k^{-1}\big(p_{k,i}^{-1}(W_{i+1}))\cap p_k^{-1}(\varpi_k^{-1}(U_k)\big) =
p_i^{-1}(W_{i+1})\cap \varpi^{-1}(q_k^{-1}(U_k))\subset O.
$$

Finally it remains to verify that the group $G/N$ and the limit of the inverse sequence 
$S=\{G/N_k,\, p_{l,k}: k,l\in\omega,\ k<l\}$ coincide as sets. Since the family $\{p_k: k\in\omega\}$
separates points of $G/N$, we see that the canonical mapping of $G/N$ to the limit of $S$ is
one-to-one. Hence it suffices to show that this mapping is onto. 

Let us note that for every $i\in\omega$, the mappings in the equality $q_i\circ\varpi=\varpi_i\circ{p_i}$
(see the above diagram) are \emph{bi-commutative} in the sense that $q_i^{-1}(\varpi_i(z))=
\varpi(p_i^{-1}(z))$ for each $z\in G/N_i$. Indeed, since all the mapping here are homomorphisms,
we only need to verify the latter equality in the case when $z$ is the identity $e_i$ of $G/N_i$. 
Direct calculations show that $q_i^{-1}(\varpi_i(e_i))=\ker q_i=\psi(L_i)$ and $\varpi(p_i^{-1}(e_i))=\varpi(\ker p_i)=\varpi(\varphi(N_i))=\psi(\pi(N_i))=\psi(L_i)$, which gives the required equality.

Let $\{x_k: k\in\omega\}$ be a sequence of points such that $x_k\in G/N_k$ and $p_{k+1,k}
(x_{k+1})=x_k$ for each $k\in\omega$. Our aim is to find $x\in G/N$ satisfying $p_k(x)=x_k$
for each $k$. Let $y_k=\varpi_k(x_k)$. Then $q_{k+1}(y_{k+1})=y_k$ for all $k$, so our choice
of the decreasing sequence $\psi_0\succ \cdots\succ \psi_n\succ  \psi_{n+1} \cdots\succ \psi$ in 
the $\sigma$-lattice $\mathcal{M}$ implies that there exists $y\in H/L$ such that $q_k(y)=y_k$ for 
each $k\in\omega$. Making use of the bi-commutativity of the diagrams $q_k\circ\varpi=
\varpi_k\circ{p_k}$, we have that $y\in q_k^{-1}(y_k)=q_k^{-1}(\varpi_k(x_k))=\varpi(p_k^{-1}(x_k))$,
for each $k\in\omega$. Hence $p_k^{-1}(x_k)\cap\varpi^{-1}(y)\neq\emptyset$ for each $k$.
Notice that $\varpi$ is a perfect homomorphism since its kernel is the compact group $\varphi(K)$. 
Hence the inclusions $p_{k+1}^{-1}(x_{k+1})\subset p_k^{-1}(x_k)$ imply that $\varpi^{-1}(y)\cap
\bigcap_{k\in\omega} p_k^{-1}(x_k)\neq\emptyset$. Take an arbitrary element $x\in \bigcap_{k\in\omega} p_k^{-1}(x_k)$. Then $p_k(x)=x_k$ for each $k\in\omega$, so the limit of the inverse
sequence $S$ and the group $G/N$ coincide as sets. Since in addition the family 
$\{p_k: k\in\omega\}$ generates the quotient topology of the group $G/N$, we infer that 
$G/N$ and the limit of $S$ are topologically isomorphic groups. So the family $\mathcal{F}$ 
is a strong $\sigma$-lattice for $G$.

It is worth noting that in the case when $\mathcal{M}$ is a strong $\sigma$-lattice for $G/K$ of
open homomorphisms onto groups with countable base, our argument requires the fact that an
extension of a second countable group by another second countable group is again second
countable (see \cite[Corollary~3.3.21]{AT}).
\end{proof}

Taking $G=K$ in Theorem~\ref{Th:Ext} (and arguing as in its proof), we obtain the following
fact:

\begin{corollary}\label{Cor:Com}
Let $\mathcal{N}$ be the family of closed invariant subgroups of type $G_\delta$ in a compact topological group $C$. For every $N\in\mathcal{N}$, let $\pi_N\colon C\to C/N$ be the quotient
homomorphism. Then the family $\{\pi_N: N\in\mathcal{N}\}$ is a strong $\sigma$-lattice of open
homomorphisms of $C$ onto groups with a countable base.
\end{corollary}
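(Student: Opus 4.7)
My plan is to invoke Theorem~\ref{Th:Ext} with $G=K=C$, identifying the strong $\sigma$-lattice produced by that theorem with the family $\{\pi_N:N\in\mathcal{N}\}$. In this specialization $G/K$ is the trivial group, which carries the (trivial) strong $\sigma$-lattice of open homomorphisms onto groups with a countable base, so the hypothesis of Theorem~\ref{Th:Ext} is vacuously satisfied. Tracing through its proof, the collection $\mathcal{A}^*$ reduces to the family of all invariant admissible subgroups of $C$, and because $C$ is compact each closed invariant $G_\delta$-subgroup of $C$ is admissible (the quotient by such a subgroup is a compact group of countable pseudocharacter, hence metrizable by Birkhoff--Kakutani, and a metric neighborhood base at the identity pulls back to a cube-nested witness of admissibility), so $\mathcal{A}^*=\mathcal{N}$.

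For a self-contained verification I would simply check the three conditions of Definition~\ref{Def:2} directly for $\mathcal{L}=\{\pi_N:N\in\mathcal{N}\}$. First, for each $N\in\mathcal{N}$ the quotient $C/N$ has a countable base: since $N$ is closed $G_\delta$ in the compact space $C$, the quotient map $\pi_N$ is closed; writing $N=\bigcap_n O_n$ with $O_n$ open in $C$, the sets $(C/N)\setminus\pi_N(C\setminus O_n)$ are open and meet in $\{e_{C/N}\}$, so $C/N$ has countable pseudocharacter, and Lemma~\ref{Le:CM} then gives a countable base. Openness of $\pi_N$ is standard. Condition~(1) holds because every neighborhood of $e_C$ contains an invariant admissible subgroup of $C$ (by \cite[Corollary~3.4.19]{AT} or the construction used in Lemma~\ref{Le:1}), which lies in $\mathcal{N}$. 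Condition~(2) is immediate, since $\mathcal{N}$ is closed under finite intersections and the natural projections between quotients witness $\prec$.

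The substantive work is condition~(3). Given a decreasing sequence $N_0\supseteq N_1\supseteq\cdots$ in $\mathcal{N}$, let $N_\omega=\bigcap_k N_k$, which is again in $\mathcal{N}$, and let $q\colon C\to\prod_k C/N_k$ be the diagonal product of $\{\pi_{N_k}\}$. Its image lies in the inverse limit $L$ of the compact groups $C/N_k$, and there is a canonical continuous injection $\iota\colon C/N_\omega\to L$ sending $xN_\omega$ to $(xN_k)_k$. The key point is that $\iota$ is surjective: for a thread $(x_kN_k)\in L$ the cosets $x_kN_k$ form a decreasing chain of non-empty closed subsets of compact $C$, so their intersection is non-empty and any point in it realizes the thread. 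A continuous bijection from a compact space to a Hausdorff space is a homeomorphism, so $\iota$ identifies $q$ with $\pi_{N_\omega}$ in the sense of the remark following Definition~\ref{Def:2}, placing $q$ in $\mathcal{L}$. The lifting clause is handled by the same compactness argument: if $\{x_n\}\subset C$ satisfies $\pi_{N_k}(x_n)=\pi_{N_k}(x_k)$ for $k<n$, then the cosets $x_nN_n$ form a decreasing chain of non-empty closed subsets of $C$, and any $x\in\bigcap_n x_nN_n$ satisfies $\pi_{N_n}(x)=\pi_{N_n}(x_n)$ for every $n$.

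The principal obstacle is condition~(3), and specifically the surjectivity of $\iota\colon C/N_\omega\to L$ together with the analogous lifting clause; both rest on compactness of $C$, which is also the main simplification over the elaborate bi-commutative diagram chase needed in the proof of Theorem~\ref{Th:Ext}.
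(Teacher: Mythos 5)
Your proposal is correct, and its first paragraph is exactly the paper's proof: the paper obtains this corollary by taking $G=K$ in Theorem~\ref{Th:Ext} and \lq\lq{arguing as in its proof}\rq\rq, which is precisely your specialization (including the observation that $\mathcal{A}^*$ collapses to the invariant admissible subgroups, which in a compact group coincide with the closed invariant $G_\delta$-subgroups). Your supplementary self-contained verification is also sound --- the countable base of $C/N$ via closedness of $\pi_N$ and Lemma~\ref{Le:CM}, and condition~(3) via the finite intersection property of the nested closed cosets, which yields both the surjectivity of the canonical map onto the inverse limit and the lifting clause --- and it simply makes explicit the compactness argument the paper leaves to the reader.
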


The above corollary admits a slight generalization.

\begin{corollary}\label{Cor:Pcom}
Let $\mathcal{A}$ be the family of admissible invariant subgroups of a pseudocompact topological group $P$. For every $N\in\mathcal{A}$, let $\pi_N\colon C\to P/N$ be the quotient homomorphism. Then the family $\{\pi_N: N\in\mathcal{A}\}$ is a strong $\sigma$-lattice of open homomorphisms of 
$P$ onto groups with a countable base.
\end{corollary}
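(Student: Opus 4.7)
The plan is to reduce the claim to Corollary~\ref{Cor:Com} by working inside the Raikov completion $C=\varrho P$. By the Comfort--Ross theorem, pseudocompactness of $P$ is equivalent to $C$ being compact together with $P$ being $G_\delta$-dense in $C$; both properties will be used crucially.

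First one checks that each $P/N$ (with $N\in\mathcal{A}$) is compact metrizable and that $N$ extends naturally to a closed invariant $G_\delta$-subgroup $\tilde N$ of $C$. Admissibility of $N$ yields countable pseudocharacter of $P/N$ by \cite[Lemma~5.5.2\,a)]{AT}, and $P/N$ inherits pseudocompactness from $P$; a pseudocompact topological group of countable pseudocharacter is metrizable, and a pseudocompact metrizable space is compact, so $P/N$ has a countable base. By the universal property of the Raikov completion, $\pi_N$ extends to a continuous homomorphism $\tilde\pi_N\colon C\to P/N$; its image is closed and dense in $P/N$, hence all of it, and the kernel $\tilde N$ is a closed invariant subgroup of $C$ with $\tilde N\cap P=N$ and $C/\tilde N\cong P/N$, so $\tilde N$ is $G_\delta$ in $C$. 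Conditions (1) and (2) of Definition~\ref{Def:2} for $\mathcal{F}=\{\pi_N:N\in\mathcal{A}\}$ now follow easily: $P$ is precompact, hence $\omega$-narrow, so by \cite[Corollary~3.4.19]{AT} every neighborhood of the identity in $P$ contains an invariant admissible subgroup; and the intersection of countably many admissible invariant subgroups is admissible and invariant, providing lower bounds.

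The nontrivial point is condition (3). Take a decreasing sequence $N_0\supset N_1\supset\cdots$ in $\mathcal{A}$, set $N=\bigcap_n N_n\in\mathcal{A}$ and $\tilde N=\bigcap_n\tilde N_n$; the diagonal product of $\{\pi_{N_n}\}$ is naturally identified with $\pi_N\in\mathcal{F}$. Given $\{x_n\}\subset P$ with $\pi_{N_k}(x_n)=\pi_{N_k}(x_k)$ whenever $k<n$, the same compatibility holds in $C$ for the extended homomorphisms $\tilde\pi_{\tilde N_k}$, so Corollary~\ref{Cor:Com} applied to $C$ yields $\tilde x\in C$ with $\tilde x\tilde N_n=x_n\tilde N_n$ for every $n$. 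The coset $\tilde x\tilde N$ is a nonempty $G_\delta$-subset of $C$, and the $G_\delta$-density of $P$ in $C$ lets us choose $x\in\tilde x\tilde N\cap P$; then $xN_n=x_nN_n$ in $P$ for every $n$, i.e.\ $\pi_{N_n}(x)=\pi_{N_n}(x_n)$, as required. The main obstacle, which dictates the whole strategy, is precisely this last lifting step: Corollary~\ref{Cor:Com} only delivers an element of $C$, and replacing it by one in $P$ rests squarely on the $G_\delta$-density of $P$ in its Raikov completion.
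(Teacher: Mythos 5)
Your proof is correct and follows essentially the same route as the paper's: both reduce the statement to Corollary~\ref{Cor:Com} by passing to the compact Ra\u{\i}kov completion and exploiting the Comfort--Ross $G_\delta$-density of $P$ in it. In fact you spell out the one step the paper leaves implicit, namely pulling the limit point furnished by Corollary~\ref{Cor:Com} back from $\varrho{P}$ into $P$ via that $G_\delta$-density.
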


\begin{proof}
Take an arbitrary element $N\in\mathcal{A}$. Then the identity of the quotient group $P/N$ is a
$G_\delta$-set in $C/N$ \cite[Lemma~5.5.2]{AT}, so $C/N$ is a compact metrizable group, by \cite[Lemma~3.1]{CS}. Let $\varrho{C}$ be the Ra\u{\i}kov completion of $C$. Then $\varrho{C}$ is a compact topological group containing $C$ as a dense subgroup. In fact, $C$ meets every non-empty $G_\delta$-set in $\varrho{C}$ (see \cite[Theorem~1.2]{CR}). For every $N\in\mathcal{N}$, the homomorphism $\pi_N\colon C\to C/N$ extends to a continuous homomorphism $p_N\colon\varrho{C}\to C/N$ \cite[Corollary~3.6.17]{AT}. Therefore we have the equality $p_N(C)=p_N(\varrho{C})=C/N$. Since the groups $\varrho{C}$ and $C/N$ are compact, the homomorphism $p_N$ is open. To finish 
our argument it suffices to apply Corollary~\ref{Cor:Com}.
\end{proof}

\begin{remark}
{\rm The reader surely noted the difference in our definitions of the $\sigma$-lattices $\mathcal{N}$ 
and $\mathcal{A}$ in Corollaries~\ref{Cor:Com} and~\ref{Cor:Pcom}, respectively. The elements of
$\mathcal{N}$ are closed subgroups \emph{of type $G_\delta$} in the compact group $C$, while
the elements of $\mathcal{A}$ are \emph{admissible} subgroups of the pseudocompact group $P$. 
Clearly every admissible subgroup of a topological group is of type $G_\delta$ in the group, while
the converse is false in the general case. However, the main reason for requiring the elements of 
$N\in\mathcal{A}$ to be admissible is to guarantee that the quotient group $P/N$ have countable 
pseudocharacter. In fact, one can also use the family $\mathcal{N}$ in the proof of Corollary~\ref{Cor:Pcom}, but this requires an extra argument.} 
\end{remark}

\begin{problem}\label{Prob:Inv}
Let $K$ be a compact invariant subgroup of a topological group $G$ such that the quotient group
$G/K$ is $\omega$-balanced. Is the group $G$ $\omega$-balanced?
\end{problem}

\begin{problem}\label{Prob:Metr}
Let $K$ be a compact invariant subgroup of a topological group $G$ such that the quotient group
$G/K$ has a strong $\sigma$-lattice of open homomorphisms onto metrizable topological groups.
Does $G$ have a strong $\sigma$-lattice of open homomorphisms onto metrizable topological 
groups?
\end{problem}

The affirmative answer to Problem~\ref{Prob:Inv} would imply that the answer to 
Problem~\ref{Prob:Metr} is \lq\lq{yes\rq\rq}.

\begin{problem}\label{Prob:Quo}
Let a topological group $G$ have a strong $\sigma$-lattice of open homomorphisms onto 
metrizable groups (groups with a countable network or a countable base) and $K$ be a compact invariant subgroup of $G$. Does the group $G/K$ have a strong $\sigma$-lattice of open homomorphisms onto metrizable groups (groups with a countable network or a countable base)?
\end{problem}

\section{Extensions of pro-Lie groups}\label{Ext_pro-Lie}
The class of groups having the properties described in the conclusion of Theorem~\ref{Th:1} is even wider than continuous homomorphic images of almost connected pro-Lie groups. It turns out that an extension of an almost connected pro-Lie group by a compact group is in this class. This fact follows from a more general result given below. 

\begin{theorem}\label{Th:2}
Let $G$ be a topological group and $K$ be a compact invariant subgroup of $G$ such that the quotient group $G/K$ is homeomorphic as a space to the product $C\times \prod_{i\in I} H_i$, where $C$ is a compact group and, for every $i\in I$, $H_i$ is a topological group with a countable network. Then the group $G$ is $\R$-factorizable, $\omega$-cellular, and the closure of every $G_{\delta,\Sigma}$-set in $G$ is a zero-set, so $G$ is an Efimov space.
\end{theorem}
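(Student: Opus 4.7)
The plan is to produce a strong $\sigma$-lattice of open homomorphisms of $G$ onto topological groups with a countable network and then read off the three conclusions from it. First I would manufacture such a lattice on $G/K$. By Corollary~\ref{Cor:Com} the compact group $C$ carries a strong $\sigma$-lattice $\mathcal{M}_C$ of quotient homomorphisms onto metrizable compact groups, and by Lemma~\ref{Le:7} the topological group $\prod_{i\in I} H_i$ carries a strong $\sigma$-lattice $\mathcal{L}$ of open homomorphisms onto groups with countable network. Taking componentwise products $p\times q$ with $p\in\mathcal{M}_C$ and $q\in\mathcal{L}$ gives a strong $\sigma$-lattice of open mappings on the \emph{space} $C\times\prod_i H_i$ onto topological groups with countable network. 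Transport this via the homeomorphism $G/K\cong C\times\prod_i H_i$ and apply Lemma~\ref{Le:6}, pairing the product lattice (strong) against the admissible-subgroup lattice of $G/K$, which is a weak $\sigma$-lattice of open homomorphisms by Lemma~\ref{Le:1} and whose images have countable network by Lemma~\ref{Le:3}, since $G/K$ is homeomorphic to a product of Lindel\"of $\Sigma$-spaces. This yields a cofinal strong $\sigma$-sublattice of open homomorphisms of $G/K$ onto groups with countable network. Theorem~\ref{Th:Ext} now lifts this structure through the compact kernel $K$: let $\mathcal{F}$ denote the resulting strong $\sigma$-lattice of open homomorphisms of $G$ onto topological groups with countable network.

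Each image $p(G)$ with $p\in\mathcal{F}$ is a topological group with countable network, hence hereditarily separable, perfectly normal, and a Lindel\"of $\Sigma$-group (so in particular $\R$-factorizable and $\omega$-cellular). From Lemma~\ref{Le:wL} it follows that $G$ is weakly Lindel\"of. For $\R$-factorizability, given a continuous $f\colon G\to\R$, Lemma~\ref{Le:5} delivers $p\in\mathcal{F}$ and a continuous $h\colon p(G)\to\R$ with $f=h\circ p$; the $\R$-factorizability of $p(G)$ further factors $h=h'\circ\sigma$ through a continuous homomorphism $\sigma\colon p(G)\to S$ onto a second countable group $S$, and then $f=h'\circ(\sigma\circ p)$ exhibits the desired second-countable factorization of $f$ through $G$.

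For $\omega$-cellularity, let $\gamma$ be a family of $G_\delta$-sets in $G$. Following the first part of the proof of Lemma~\ref{Le:4}, every $P\in\gamma$ is a union of $\mathcal{F}$-cylindric sets $g_{P,x}^{-1}(C_{P,x})$ with $C_{P,x}$ a zero-set in $g_{P,x}(G)$, so the collection $\Gamma$ of all such cylindric sets obtained as $P$ ranges over $\gamma$ satisfies $\bigcup\Gamma=\bigcup\gamma$. I would then run a $\sigma$-lattice induction: starting from an arbitrary $p_0\in\mathcal{F}$ and a countable $\Lambda_0\subset\Gamma$ with $p_0(\bigcup\Lambda_0)$ dense in $p_0(\bigcup\Gamma)$, at stage $n+1$ choose $p_{n+1}\in\mathcal{F}$ below $p_n$ and below every $g_{P,x}$ labelling a member of $\Lambda_n$ (making each such member $p_{n+1}$-saturated), then enlarge $\Lambda_{n+1}\supset\Lambda_n$ countably so that $p_{n+1}(\bigcup\Lambda_{n+1})$ is dense in $p_{n+1}(\bigcup\Gamma)$, which is possible because $p_{n+1}(G)$ is hereditarily separable. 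At the limit, $p_\omega=\lim_n p_n\in\mathcal{F}$ and $\Lambda=\bigcup_n\Lambda_n$ is countable; each member of $\Lambda$ is $p_\omega$-saturated, and a routine verification using the basic open sets of the inverse-limit topology on $p_\omega(G)$ (supplied by condition (3) of a strong $\sigma$-lattice) shows that $p_\omega(\bigcup\Lambda)$ is dense in $p_\omega(\bigcup\Gamma)$. Since $p_\omega$ is open, one has
$$
\overline{\bigcup\Lambda}=p_\omega^{-1}\bigl(\overline{p_\omega(\bigcup\Lambda)}\bigr)\supseteq p_\omega^{-1}\bigl(p_\omega(\bigcup\Gamma)\bigr)\supseteq\bigcup\Gamma,
$$
and replacing each $F\in\Lambda$ by some $P\in\gamma$ containing it gives the required countable subfamily of $\gamma$ with dense union in $\bigcup\gamma$.

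Once $G$ is known to be $\omega$-cellular and $\mathcal{F}$ is in particular a weak $\sigma$-lattice of open mappings onto regular spaces with countable network, Lemma~\ref{Le:4} applies directly to conclude that the closure in $G$ of every $G_{\delta,\Sigma}$-set is a zero-set, so $G$ is an Efimov space. The main technical obstacle I anticipate is the first paragraph---checking that the componentwise-product lattice on the space $C\times\prod_i H_i$ together with Lemma~\ref{Le:6} really produces a strong $\sigma$-lattice of open \emph{homomorphisms} on $G/K$ even though the compact factor $C$ does not have countable network on its own---followed by the careful bookkeeping in the $\omega$-cellularity induction to ensure that saturation and density both pass through the diagonal limit $p_\omega$.
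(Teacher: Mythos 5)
Your proposal is correct and follows essentially the same route as the paper: build the product lattice on $C\times\prod_i H_i$ via Corollary~\ref{Cor:Com} and projections, transfer it to $G/K$ through the homeomorphism using Lemma~\ref{Le:6} against the admissible-subgroup lattice (where, as in the paper, you should take \emph{invariant} admissible subgroups --- available since $G/K$ is weakly Lindel\"of, hence $\omega$-narrow --- so that the resulting strong $\sigma$-lattice consists of genuine homomorphisms as Theorem~\ref{Th:Ext} requires), lift through $K$ by Theorem~\ref{Th:Ext}, and read off the three conclusions. Your explicit back-and-forth induction for $\omega$-cellularity correctly fills in what the paper only gestures at with ``follow the patterns in the proofs of Theorems~\ref{Th:1} and~\ref{Th:1y}.''
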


\begin{proof}
First we claim that the product group $\Pi=C\times \prod_{i\in I} H_i$ has a strong 
$\sigma$-lattice of open homomorphisms onto topological groups with a countable network. 
Indeed, Corollary~\ref{Cor:Pcom} guarantees that the group $C$ has a strong $\sigma$-lattice 
of open homomorphisms onto groups with a countable base. Let $\mathcal{C}$ be such a lattice 
for $C$. For every countable set $J\subset I$, let $p_J$ be the projection of $\Pi$ onto 
$\prod_{i\in J} H_i$. It is easy to see that the family 
$$
\mathcal{M}= \{f\times p_J: f\in\mathcal{C},\ \emptyset\neq J\subset I,\ |J|\leq\omega \}
$$
is a strong $\sigma$-lattice for $\Pi$ consisting of open homomorphisms onto groups with a 
countable network. Hence the space $\Pi$ is weakly Lindel\"of by Lemma~\ref{Le:wL}. Applying Lemma~\ref{Le:wL2} we infer that the group $G$ is also weakly Lindel\"of. Hence the group 
$G/K$ is $\omega$-narrow by \cite[Corollary~5.2.9]{AT}. 

Denote by $\mathcal{M}^*$ the weak $\sigma$-lattice for the group $G/K$ which consists 
of all continuous open homomorphisms of $G/K$ onto topological groups of countable pseudocharacter. Equivalently, one can define $\mathcal{M}^*$ as the family of all quotient homomorphisms $\varphi\colon G/K\to (G/K)/L$, where $L$ is an admissible invariant subgroup 
of $G/K$. Since the spaces $G/K$ and $\Pi$ are homeomorphic, it follows from Lemma~\ref{Le:6} 
that $\mathcal{M}^*$ contains a cofinal subfamily, say, $\mathcal{L}^*$ which is a strong 
$\sigma$-lattice for $G/K$. 

Since the subgroup $K$ of $G$ is compact and invariant, we can apply Theorem~\ref{Th:Ext} to
conclude that the group $G$ itself has a strong $\sigma$-lattice of open homomorphisms onto
groups with a countable network, say, $\mathcal{F}$. The family $\mathcal{F}$ has the factorization property according to Lemma~\ref{Le:5.0}. Since every topological group with a countable network 
is $\R$-factorizable \cite[Corollary~8.1.7]{AT}, we see that the group $G$ is $\R$-factorizable as well
(one can apply \cite[Lemma~8.1.11]{AT} here). To complete the argument one can follow the patterns in the proofs of Theorem~\ref{Th:1} and Theorem~\ref{Th:1y}.
\end{proof}

Since every almost connected pro-Lie group is \emph{homeomorphic} to the product $C\times
\R^\kappa$, where $C$ is a compact group and $\kappa$ is a cardinal, the next fact is immediate
from Theorem~\ref{Th:2}.

\begin{corollary}\label{Cor:EXT}
If a topological group $G$ contains a compact invariant subgroup $K$ such that the quotient group $G/K$ is homeomorphic to an almost connected pro-Lie group, then $G$ is $\R$-factorizable, 
$\omega$-cellular, and $G$ is an Efimov space.
\end{corollary}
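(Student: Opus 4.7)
The plan is to reduce the statement directly to Theorem~\ref{Th:2} by exploiting the structural description of almost connected pro-Lie groups cited in the introduction. The only real content of the corollary is identifying the hypothesis \emph{$G/K$ is homeomorphic to an almost connected pro-Lie group} with a hypothesis of the form required by Theorem~\ref{Th:2}, namely \emph{$G/K$ is homeomorphic to a product $C\times\prod_{i\in I}H_i$ of a compact group and topological groups with a countable network.}

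First I would invoke \cite[Corollary~8.9]{HM2}, already stated in the introduction, which asserts that any almost connected pro-Lie group is \emph{homeomorphic} to a product $\R^\kappa\times C$, where $C$ is a compact topological group and $\kappa$ is a cardinal. Thus, under the hypothesis of the corollary, $G/K$ is homeomorphic to $C\times\R^\kappa$. Writing this as $C\times\prod_{i\in\kappa}H_i$ with each $H_i=\R$, I observe that every factor $H_i=\R$ is second countable and, in particular, has a countable network.

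Next I would apply Theorem~\ref{Th:2} with this identification: $K$ is the given compact invariant subgroup of $G$, the compact factor is $C$, and the index set is $I=\kappa$ with $H_i=\R$ for all $i\in I$. All hypotheses of Theorem~\ref{Th:2} are met, and its conclusion is precisely that $G$ is $\R$-factorizable, $\omega$-cellular, and that the closure in $G$ of every $G_{\delta,\Sigma}$-set is a zero-set; the last property says exactly that $G$ is an Efimov space. No additional obstacles arise: the work has all been done inside Theorem~\ref{Th:2} (via Theorem~\ref{Th:Ext} for lifting strong $\sigma$-lattices across compact extensions, Lemmas~\ref{Le:wL} and~\ref{Le:wL2} for weak Lindel\"ofness, and Lemma~\ref{Le:4} for the Efimov property), so the corollary is essentially a pure application.
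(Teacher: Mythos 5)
Your proposal is correct and matches the paper's own argument: the paper likewise notes that an almost connected pro-Lie group is homeomorphic to $C\times\R^\kappa$ by \cite[Corollary~8.9]{HM2} and concludes that the corollary is immediate from Theorem~\ref{Th:2}, exactly as you do by taking $H_i=\R$ for each $i\in\kappa$.
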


The authors are grateful to K.\,H.~Hofmann and S.\,A.~Morris for providing us with an argument 
for the proof of the following lemma.

\begin{lemma}\label{Le:HMo}
Let $G$ be a pro-Lie group and $K$ be a compact invariant subgroup of $G$ such that the 
quotient group $H=G/K$ is a connected pro-Lie group. Then the group $G$ is almost connected. 
Furthermore, if $f\colon G\to G/K$ is the quotient homomorphism and $G_0$ is the connected
component of $G$, then $f(G_0)=H$. 
\end{lemma}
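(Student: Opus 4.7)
The plan is to prove the single equality $G = G_0K$; both conclusions of the lemma follow from it at once. Indeed, $G/G_0 = G_0K/G_0 \cong K/(K\cap G_0)$ is a continuous homomorphic image of the compact group $K$ and hence is compact, showing that $G$ is almost connected, while $f(G_0) = f(G_0K) = f(G) = H$ gives the second assertion.

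To establish $G = G_0K$, I would show that the quotient group $G/G_0K$ is trivial. First, since $G_0$ is closed and $K$ is compact, the product $G_0K$ is a closed invariant subgroup of $G$. The map $f$ is closed, being the quotient by the compact invariant subgroup $K$, so $f(G_0) = f(G_0K)$ is closed in $H$, and the standard isomorphism theorem yields a topological isomorphism
\[
G/G_0K \;\cong\; H/f(G_0).
\]
This group is Hausdorff and, as the image of the connected group $H$ under a continuous surjection, is connected.

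It then remains to show that $G/G_0K$ is also totally disconnected, which will force it to be trivial. Here I would invoke the structural result of Hofmann--Morris that, for any pro-Lie group $G$, the quotient $G/G_0$ is \emph{prodiscrete}; in particular $G/G_0$ admits a neighborhood base $\{U_\alpha\}$ at the identity consisting of open invariant subgroups with $\bigcap_\alpha U_\alpha = \{\bar e\}$, directed so that $\alpha\le\beta$ means $U_\beta\subseteq U_\alpha$. Since $G/G_0K \cong (G/G_0)/(KG_0/G_0)$ and $KG_0/G_0 \cong K/(K\cap G_0)$ is compact, the sets $U_\alpha(KG_0/G_0)/(KG_0/G_0)$ form a neighborhood base at the identity of $G/G_0K$ consisting of open invariant subgroups; I claim their intersection is trivial. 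If $x$ lies in every $U_\alpha(KG_0/G_0)$, write $x = u_\alpha k_\alpha$ with $u_\alpha \in U_\alpha$ and $k_\alpha \in KG_0/G_0$; the directedness of the base gives $u_\alpha \to \bar e$, while compactness of $KG_0/G_0$ yields a subnet with $k_\alpha \to k$, so $x = k \in KG_0/G_0$. Hence $G/G_0K$ is Hausdorff with an open-subgroup base at the identity, hence totally disconnected. A connected totally disconnected Hausdorff group is trivial, so $G = G_0K$.

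The principal obstacle will be the Hofmann--Morris structural input that $G/G_0$ is prodiscrete for every pro-Lie group $G$; once that is granted, the remainder of the argument is routine bookkeeping around the closedness of $f$, the closedness of the product $G_0K$, and the compactness of $K/(K\cap G_0)$.
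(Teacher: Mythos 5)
Your argument is correct, but it reaches the key equality $G=G_0K$ by a genuinely different route than the paper. The paper's proof is a two-line affair: by \cite[Corollary~4.22\,(iii)]{PROBOOK} the image $f(G_0)$ is \emph{dense} in the identity component of $G/K$, which is all of the connected group $H$; since $K$ is compact the quotient map $f$ is closed and $G_0$ is closed, so $f(G_0)$ is closed and therefore equals $H$, giving $G=G_0K$ and the compactness of $G/G_0\cong K/(K\cap G_0)$ at once. You instead pass to the double quotient $G/G_0K\cong H/f(G_0)$ and show it is simultaneously connected (as a quotient of $H$) and totally disconnected (via small open normal subgroups), hence trivial; the details of your compactness/net argument for $\bigcap_\alpha U_\alpha(KG_0/G_0)=KG_0/G_0$ are sound and are in fact the same trick the paper uses in Lemma~\ref{Le:Adm}. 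The trade-off is in the structural input: the paper needs only the density statement for $f(G_0)$, whereas you need the statement that $G/G_0$ has a neighborhood base at the identity of open normal subgroups with trivial intersection. One caution on your citation: what Hofmann and Morris prove for a general pro-Lie group is that $G/G_0$ is \emph{protodiscrete} (a base of open normal subgroups at the identity); whether $G/G_0$ is complete, and hence genuinely \emph{prodiscrete}, is listed as an open problem in their survey. Since your argument uses only the protodiscrete part, this does not affect its validity, but you should not assert prodiscreteness. On balance your route is heavier than the paper's but yields the same conclusions, including $f(G_0)=H$, by the same final bookkeeping.
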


\begin{proof}
Clearly $f$ is a closed mapping. The connected component $G_0$ of $G$ is a closed 
invariant subgroup of $G$. According to \cite[Corollary~4.22\,(iii)]{PROBOOK}, $f(G_0)$ 
is dense in the connected group $G/K$. Since $G_0$ is closed in $G$ and the mapping $f$ 
is closed, we see that $f(G_0)=G/K$. Hence $G_0K=G$ and the quotient group $G/G_0$ is 
compact. This implies that the group $G$ is almost connected.
\end{proof}

In the following theorem we weaken \lq{connected\rq} to \lq{almost connected\rq} 
in the assumptions of Lemma~\ref{Le:HMo}.

\begin{proposition}\label{prop_HM}
Let $G$ be a pro-Lie group and $K$ be a compact invariant subgroup of $G$ such that the 
quotient group $G/K$ is an almost connected pro-Lie group. Then $G$ is almost connected. 
\end{proposition}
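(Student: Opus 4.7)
The strategy is to reduce to the connected case handled by Lemma~\ref{Le:HMo} via an intermediate subgroup. Let $H = G/K$, let $H_0$ be the connected component of the identity of $H$, and let $f\colon G\to H$ be the quotient homomorphism. Since $H$ is almost connected, the quotient $H/H_0$ is compact. Set $L = f^{-1}(H_0)$. Then $L$ is a closed invariant subgroup of $G$, hence a pro-Lie group (closed subgroups of pro-Lie groups are pro-Lie). Moreover $K \subset L$ and the restriction $f|_L\colon L\to H_0$ is a quotient homomorphism with kernel $K$, because $L$ is $f$-saturated and $f$ is open. Thus $L/K$ is topologically isomorphic to the connected pro-Lie group $H_0$.

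Now apply Lemma~\ref{Le:HMo} to the pro-Lie group $L$ with its compact invariant subgroup $K$ (whose quotient $L/K\cong H_0$ is connected pro-Lie). This yields that $L$ is almost connected, so $L/L_0$ is compact, where $L_0$ is the connected component of the identity in $L$.

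The next step is the identification $G_0 = L_0$. Since $f(G_0)$ is a connected subset of $H$ containing the identity, $f(G_0)\subset H_0$, whence $G_0\subset f^{-1}(H_0)=L$. Because $G_0$ is then a connected subset of $L$ containing the identity, $G_0\subset L_0$; the reverse inclusion is trivial from $L\subset G$. So $G_0 = L_0$.

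Finally, assemble the compactness of $G/G_0$. Since $L = f^{-1}(H_0)$, the induced map $G/L \to H/H_0$ is a topological isomorphism, so $G/L$ is compact. Combined with compactness of $L/L_0 = L/G_0$, the short exact sequence
\[
1 \to L/G_0 \to G/G_0 \to G/L \to 1
\]
exhibits $G/G_0$ as an extension of a compact group by a compact group, hence compact. Therefore $G$ is almost connected. The main obstacle is the routine but essential check that $L/K \cong H_0$ as topological groups, which uses that $L$ is saturated under the open map $f$; once that is in place the argument is essentially bookkeeping around Lemma~\ref{Le:HMo}.
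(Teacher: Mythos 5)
Your proof is correct and follows essentially the same route as the paper's: form the preimage $G^*=f^{-1}(H_0)$, apply Lemma~\ref{Le:HMo} to it to see that it is almost connected with connected component $G_0$, and then deduce compactness of $G/G_0$ from the three-space property for compactness, using that $G/G^*\cong H/H_0$ is compact. The only cosmetic difference is that the paper extracts $f(G_0)=H_0$ and $G^*=G_0K$ from Lemma~\ref{Le:HMo} before invoking the three-space argument, whereas you identify the connected component of $G^*$ with $G_0$ directly; both are valid.
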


\begin{proof}
Let  $f\colon G\to G/K$ be the quotient homomorphism. Denote by $G_0$ and $H_0$  
the connected components of $G$ and the quotient group $H=G/K$, respectively. Then 
$G_0$ and $H_0$ are closed invariant subgroups and $G^*=f^{-1}(H_0)$ is also a closed 
invariant subgroup of $G$. Hence $G^*$ is a pro-Lie group as a closed subgroup of $G$ 
\cite[Theorem 3.35]{PROBOOK}. Since $G^*/K\cong H_0$ is a closed subgroup of $H$, 
we conclude that $G^*/K$ is a connected pro-Lie group. It follows from $G_0\leq G^*\leq G$ 
that the connected component of $G^*$ is $G_0$. Hence Lemma~\ref{Le:HMo} implies that 
$f(G_0)=H_0$ and that $G^*=f^{-1}(H_0)=G_0K$ is an almost connected pro-Lie group. 
Clearly the quotient group $L=G^*/G_0$ is compact. Denote by $\pi$ the quotient 
homomorphism of $G$ onto $G/G_0$. Then $L=G^*/G_0\cong \pi(K)$ is a compact 
subgroup of $G/G_0$. To finish the proof it suffices to verify that the group $G/G_0$ 
is compact, i.e.~$G$ is almost connected.

Let $p\colon G\to H^*$ be the quotient homomorphism, where $H^*=G/G^*$. Then 
we can represent $p$ in the form $p=q\circ \pi$, where $q\colon G/G_0\to (G/G_0)/L$ 
is the quotient homomorphism. Clearly the kernel of $q$ is the compact subgroup 
$L$ of $G/G_0$. Further, the quotient group $G/G^*$ is topologically isomorphic to the
group $H/f(G_0)=H/H_0$ which is compact since $H$ is almost connected. Hence $q$ 
is a quotient homomorphism of $G/G_0$ onto the compact group $H^*/L\cong H/H_0$
and the kernel of $q$ is compact. We conclude therefore that the group $G/G_0$ is also 
compact by the well-known fact that compactness is a three space property in topological 
groups \cite[Corollary 1.5.8]{AT}. This completes the proof of the theorem.
\end{proof}

We don't know if Proposition~\ref{prop_HM} can be generalized assuming only that the quotient 
group $G/K$ is \emph{homeomorphic} to an almost connected pro-Lie group.

It is also an open question whether Proposition~\ref{prop_HM} remains valid without assuming 
\emph{a priori} that $G$ is a pro-Lie group.

\section{Convergence properties of pro-Lie groups}\label{Sec:CS}
Our aim in this section is to show in Theorem~\ref{Th:4} that the closure and sequential 
closure of an arbitrary $G_{\delta,\Sigma}$-subset of an almost connected pro-Lie group 
coincide. As usual, we say that a subset $Y$ of a space $X$ is a $G_{\delta,\Sigma}$-set 
in $X$ provided that $Y$ is the union of a family of $G_\delta$-sets in $X$.

In the sequel we will use a result proved in \cite[Theorem~2.8]{BCDT} in the case of Abelian
topological groups. However, one can repeat the corresponding arguments in \cite{BCDT}
without the use of commutativity of the groups involved there, thus obtaining the following 
fact:

\begin{theorem}\label{Th:BCDT}
Let $K$ be a compact invariant subgroup of a topological group $X$ and $p\colon X\to X/K$ the quotient homomorphism. If $Y$ is a zero-dimensional compact subspace of $X/K$, then there 
exists a continuous mapping $s\colon Y\to X$ satisfying $p\circ{s} =Id_Y$. 
\end{theorem}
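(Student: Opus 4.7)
The proof I propose follows the structure of \cite[Theorem~2.8]{BCDT}, checking that the commutativity of the ambient group is nowhere essential. The argument splits naturally into a local step (existence of continuous sections over clopen neighborhoods of each point of $Y$) and a short globalization by pasting.

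For the globalization, suppose the local step is established: for every $y_0 \in Y$ and every $x_0 \in p^{-1}(y_0)$ there exist a clopen neighborhood $V \ni y_0$ in $Y$ and a continuous $s_V \colon V \to X$ with $p \circ s_V = \mathrm{Id}_V$ and $s_V(y_0) = x_0$. Applying this at each $y \in Y$ produces a clopen cover $\{V_y : y \in Y\}$ of $Y$; by compactness of $Y$, some finite subcover $V_{y_1}, \ldots, V_{y_n}$ already covers $Y$; and by zero-dimensionality one refines this to a pairwise disjoint clopen cover $W_1, \ldots, W_n$ of $Y$ with $W_i \subseteq V_{y_i}$. Setting $s|_{W_i} = s_{y_i}|_{W_i}$ defines a continuous section $s \colon Y \to X$, as required.

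The main work lies in the local step. Fix $y_0 \in Y$ and $x_0 \in p^{-1}(y_0)$. Since $p$ is an open homomorphism, for every open symmetric neighborhood $U$ of the identity $e \in X$ the set $p(x_0 U)$ is an open neighborhood of $y_0$ in $X/K$. Because $Y$ is zero-dimensional, we can choose a clopen neighborhood $V \subseteq Y \cap p(x_0 U)$ of $y_0$. The delicate issue is to define $s_V$ continuously on $V$: for each $y \in V$ the set $p^{-1}(y) \cap x_0 U$ is nonempty but generally contains many points. One must shrink $U$ and $V$ so that $p^{-1}(y) \cap x_0 U$ concentrates in a single continuously-varying \lq\lq{slice\rq\rq} of $x_0 U$; here the compactness of the fiber $p^{-1}(y_0) = x_0 K$ supplies the uniform estimates allowing such a selection, mirroring the corresponding construction in \cite{BCDT}.

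The principal obstacle, as this outline makes clear, is the construction of the local section for a compact-fibered quotient of an \emph{arbitrary} topological group, rather than a locally compact one where Mostert's cross-section theorem would supply it immediately. Carrying out the BCDT construction verbatim in the non-abelian setting should succeed because their argument uses only neighborhood manipulations around the identity together with the compactness of $K$ (neither of which appeals to commutativity), but the technical verification of this step is where real care is needed.
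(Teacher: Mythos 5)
The first thing to note is that the paper itself does not prove Theorem~\ref{Th:BCDT}: it merely remarks that the proof of \cite[Theorem~2.8]{BCDT} nowhere uses commutativity of the ambient group and can be repeated verbatim. At that level your proposal and the paper coincide --- both ultimately rest on re-running the BCDT argument. The problem is that the extra structure you superimpose on it does not constitute progress, and the one place where you do sketch an actual construction is not viable as stated.

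Concretely, your local/global decomposition is vacuous. A clopen neighborhood $V$ of $y_0$ in $Y$ is itself a compact zero-dimensional subspace of $X/K$, so your ``local step'' (a continuous section over $V$ through a prescribed point $x_0$) is word-for-word the theorem you are trying to prove, not a weaker statement from which the theorem is assembled; the pasting argument, while correct, buys nothing. All the content therefore sits in the local step, and there your sketch breaks down: for $y\in V$ the set $p^{-1}(y)\cap x_0U$ is a nonempty relatively open piece of the coset $p^{-1}(y)=xK$ with no distinguished point, and no finite amount of shrinking of $U$ and $V$ makes this intersection ``concentrate in a single continuously-varying slice'' --- if such a one-shot selection existed, the theorem would be essentially trivial and would not need the compactness of $K$ at all. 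The actual argument in \cite{BCDT} is an infinite successive-approximation scheme: one builds a sequence of maps $s_n\colon Y\to X$ that are sections only modulo a decreasing sequence of neighborhoods of the identity (equivalently, one works through the quotients of $K$ by closed normal $G_\delta$-subgroups and takes an inverse limit), and the compactness of the fibers is what forces the approximations to converge to a genuine section. Since that iterative construction is the entire substance of the theorem and your proposal neither reproduces it nor replaces it with anything that would work, the proof has a genuine gap. What would close it is either carrying out the BCDT approximation scheme and checking, line by line, that only neighborhood manipulations at the identity and compactness of $K$ are used (which is exactly the paper's implicit claim), or invoking a zero-dimensional selection theorem for the lower semicontinuous, compact-valued map $y\mapsto p^{-1}(y)$ together with a reduction to the metrizable case.
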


\begin{theorem}\label{Th:4}
Let $H$ be an almost connected pro-Lie group. Then, for every $G_{\delta,\Sigma}$-set $P$ 
in $H$ and every point $x\in\overline{P}$, the set $P$ contains a sequence converging to $x$. 
In other words, the closure of $P$ and the sequential closure of $P$ in $H$ coincide.
\end{theorem}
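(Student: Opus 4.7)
The plan is to combine $\omega$-cellularity (Corollary~\ref{Cor:AlC}) with the strong $\sigma$-lattice structure available on the product space $\R^\kappa\times C$, then to lift a convergent sequence back to $H$ by means of the cross-section Theorem~\ref{Th:BCDT}.

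\emph{Stage 1 (reduction).} Since $H$ is $\omega$-cellular, from the defining family of $G_\delta$-sets of $P$ we extract a countable subfamily $\{G_n:n\in\omega\}$ whose union $L:=\bigcup_n G_n\subseteq P$ is dense in $P$. Because $\overline L=\overline P$, it suffices to find, for each $x\in\overline L$, a sequence in $L$ converging to $x$. By \cite[Corollary~8.9]{HM2}, $H$ is homeomorphic to $\R^\kappa\times C$ for some compact topological group $C$; since the conclusion is purely topological, we may replace $H$ by $\R^\kappa\times C$ endowed with its direct-product group structure, keeping the notation $L$, $x$, etc.

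\emph{Stage 2 (saturation under one open homomorphism).} Let $\mathcal N$ denote the family of closed invariant $G_\delta$-subgroups of $C$, yielding the strong $\sigma$-lattice of Corollary~\ref{Cor:Com}, and write $p_F\colon\R^\kappa\to\R^F$ for the coordinate projection. The family
\[
\mathcal F:=\{\,p_F\times\pi_N:F\subseteq\kappa\text{ at most countable},\ N\in\mathcal N\,\}
\]
is a strong $\sigma$-lattice of open homomorphisms of $\R^\kappa\times C$ onto Polish topological groups $\R^F\times(C/N)$. For each $y\in L$, choose $n(y)$ with $y\in G_{n(y)}$ and, as in the proof of Lemma~\ref{Le:4}, use that $\mathcal F$ generates the topology to obtain a $G_\delta$-neighborhood $B_y\subseteq G_{n(y)}$ of $y$ of the form $(p_{F_y}\times\pi_{N_y})^{-1}(\tilde B_y)$ with $F_y\subseteq\kappa$ countable and $N_y\in\mathcal N$. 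The family $\{B_y:y\in L\}$ consists of $G_\delta$-sets whose union is $L$, so by $\omega$-cellularity a countable subfamily $\{B_{y_k}\}_k$ has union $L^*:=\bigcup_k B_{y_k}$ dense in $L$. Setting $J:=\bigcup_k F_{y_k}$ and $N^*:=\bigcap_k N_{y_k}\in\mathcal N$, every $B_{y_k}$ is $(p_J\times\pi_{N^*})$-saturated, hence so is $L^*$. Thus for the single map $g:=p_J\times\pi_{N^*}\colon\R^\kappa\times C\to M:=\R^J\times(C/N^*)$ onto the Polish group $M$, one has $L^*=g^{-1}(\tilde L^*)$ for some $F_\sigma$-set $\tilde L^*\subseteq M$, while $\overline{L^*}=\overline L$.

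\emph{Stage 3 (sequence in the Polish image and two-step lifting).} Openness of $g$ gives $y:=g(x)\in\overline{\tilde L^*}$, so metrizability of $M$ provides a sequence $(y_k)\subseteq\tilde L^*$ with $y_k\to y$. Factor $g=g_1\circ g_2$, where $g_2:=\mathrm{id}_{\R^\kappa}\times\pi_{N^*}$ is the quotient homomorphism modulo the compact invariant subgroup $\{0\}\times N^*\triangleleft\R^\kappa\times C$, and $g_1:=p_J\times\mathrm{id}_{C/N^*}$. Lift $(y_k)$ through $g_1$ to a sequence $(z_k)\to z:=g_2(x)$ in $\R^\kappa\times(C/N^*)$ by keeping the coordinates outside $J$ equal to those of $x$; this is immediate from the product structure. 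The set $Y:=\{z\}\cup\{z_k:k\in\omega\}$ is a countable compact Hausdorff, hence metrizable and zero-dimensional, subspace of $\R^\kappa\times(C/N^*)$. Apply Theorem~\ref{Th:BCDT} to $g_2$ and $Y$ to obtain a continuous section $s\colon Y\to\R^\kappa\times C$; right-translation of $s$ within the compact fiber $g_2^{-1}(z)$ produces a section with $s(z)=x$. Setting $x_k:=s(z_k)$, continuity of $s$ yields $x_k\to x$, and the $g_2$-saturation of $L^*$ — which follows from $L^*=g_2^{-1}(g_1^{-1}(\tilde L^*))$ — forces $x_k\in L^*\subseteq L$, completing the proof.

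The principal technical difficulty lies in Stage~2: one must verify that the pointwise cylindric decomposition of the $G_\delta$-sets $G_n$ along the product lattice $\mathcal F$, combined with $\omega$-cellularity applied to the family of $\mathcal F$-cylindric neighborhoods, yields a single common map $g$ with Polish image under which a dense subset of $L$ is globally saturated. Once this is in place, the two-step lifting in Stage~3 — first using the product structure, then invoking the cross-section theorem for the quotient by a compact invariant subgroup — is essentially routine.
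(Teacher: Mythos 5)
Your proposal is correct and follows essentially the same route as the paper's proof: reduce to the product $\R^\kappa\times K$, use $\omega$-cellularity and the strong $\sigma$-lattice to saturate a countable dense subfamily of the $G_\delta$-sets under a single open homomorphism onto a second-countable group, extract a convergent sequence in that image, lift it through the $\R$-coordinates by freezing the coordinates of $x$ outside $J$, lift through the compact quotient via Theorem~\ref{Th:BCDT}, and finally translate by an element of the kernel so that the limit is exactly $x$. The only immaterial divergence is that you obtain the saturation by a cylindric decomposition plus a second application of $\omega$-cellularity, whereas the paper refines the family to zero-sets and invokes the factorization property of the lattice directly.
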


\begin{proof}
By \cite[Corollary~8.9]{HM2}, the group $H$ is homeomorphic to the product $\R^\kappa\times K$, where $\R$ is the real line with the usual topology, $\kappa$ is a cardinal, and $K$ is a compact topological group. Therefore, it suffices to deduce the conclusion of the theorem in the case when 
$H$ is topologically isomorphic to the product group $\R^\kappa\times K$. For simplicity we identify 
$H$ with $\R^\kappa\times K$. 

Let $P$ be a $G_{\delta,\Sigma}$-set in $H$ and $x\in\overline{P}\setminus P$ be an arbitrary point. 
Take a family $\gamma$ of $G_\delta$-sets in $H$ such that $P=\bigcup\gamma$. Refining the elements of $\gamma$, if necessary, we can assume that every element of $\gamma$ is a zero-set
in $H$. It follows from Corollary~\ref{Cor:AlC} that $cel_\omega(H)\leq\omega$, so $\gamma$
contains a countable subfamily whose union is dense in the union of $\gamma$. This in turn enables
us to assume that the family $\gamma$ is countable. 

Let $\mathcal{C}$ be a strong $\sigma$-lattice of continuous open homomorphisms of the compact 
group $K$ onto second countable topological groups. For every non-empty set $J\subset\kappa$,
we denote by $p_J$ the projection of $\R^\kappa$ onto $\R^J$. As in the proof of Theorem~\ref{Th:2}
one can verify that the family
$$
\mathcal{M} = \{p_J\times f: f\in\mathcal{C},\ \emptyset\neq J\subset\kappa,\ |J|\leq\omega\}
$$
is a strong $\sigma$-lattice for $H$ and, therefore, $\mathcal{M}$ has the factorization property. 
Since every element of $\gamma$ is a zero-set in $H$, we conclude that there exists an element 
$\varphi\in\mathcal{M}$ such that $F=\varphi^{-1}\varphi(F)$, for each $F\in\gamma$. Let
$\varphi=p_J\times f$, where $J$ is a non-empty countable subset of $\kappa$ and $f\in\mathcal{C}$. Then the groups $C=f(K)$ and $\R^J\times C$ are second countable and $\varphi(x)\in\overline{\varphi(\bigcup\gamma)}$. Hence we can find a sequence $\{z_n: n\in\omega\}\subset \varphi(\bigcup\gamma)$ converging to the point $\varphi(x)$. 

For every $n\in\omega$, let $z_n=(u_n,c_n)$, where $u_n\in \R^J$ and $c_n\in C$. 
Let also $x=(x^*,a)$, where $x^*\in\R^\kappa$ and $a\in K$. Then $\varphi(x)=(y^*,b)$, where $y^*=p_J(x^*)\in \R^J$ and $b=f(a)\in C$. It is clear that the sequences $\{u_n: n\in\omega\}$ and 
$\{c_n: n\in\omega\}$ converge to $y^*$ and $b$, respectively. For every $n\in\omega$, we define 
a point $y_n\in\R^\kappa$ by the rule $y_n(\alpha)=u_n(\alpha)$ if $\alpha\in J$ and 
$y_n(\alpha)=x^*(\alpha)$ if $\alpha\in\kappa\setminus J$. Then the sequence 
$\{y_n: n\in\omega\}$ converges to $x^*$, so the sequence $\{(y_n,c_n): n\in\omega\}$ 
converges to $(x^*,b)$. 

Let $j$ be the identity mapping of $\R^\kappa$ onto itself and $e^*$ the identity element 
of $\R^\kappa$. Then $\pi=j\times f$ is a continuous homomorphism of $H$ onto the group 
$\R^\kappa\times C$ which satisfies $\ker\pi\subset \{e^*\}\times K$ and $\pi\prec \varphi$. 
Since $f$ is open, the product homomorphism $\pi$ is open as well. It follows from 
$\pi\prec \varphi$ that $F=\pi^{-1}\pi(F)$, for each $F\in\gamma$. We claim that 
$(y_n,c_n)\in\pi(\bigcup\gamma)$, for each $n\in\omega$. Indeed, the 
homomorphism $\varphi$ can be represented as the composition $\psi\circ \pi$, where 
$\psi\colon\R^\kappa\times C\to \R^J\times C$ is a homomorphism defined by $\psi(u,c)=
(p_J(u),c)$ for all $u\in \R^\kappa$ and $c\in C$. Take an arbitrary integer $n\in\omega$. 
It follows from our definition of the element $y_n$ that $\psi(y_n,c_n)=(u_n,c_n)=z_n$, so 
there exists $F\in\gamma$ such that $z_n\in\varphi(F)$. Since $F=\varphi^{-1}\varphi(F)$ 
and $\varphi=\psi\circ\pi$, we see that $(y_n,c_n)\in\pi(F)$. This proves our claim. 

Let $t_n=(y_n,c_n)$, for each $n\in\omega$. The sequence $\{t_n: n\in\omega\}$ converges
to the element $t=(x^*,b)=\pi(x)$ of $\R^\kappa\times C$. Hence $B=\{t\}\cup\{t_n: n\in\omega\}$ 
is a compact zero-dimensional subset of $\R^\kappa\times C$. Since the kernel of $\pi$, say, $N$ is contained in $\{e^*\}\times K$, it is clear that $N$ is a compact subgroup of $H$. So we can apply Theorem~\ref{Th:BCDT} to find a continuous mapping $s\colon B\to H$ satisfying $\pi\circ{s}=Id_B$. Let $x_n=s(t_n)$, for each $n\in\omega$. Then the sequence $\{x_n: n\in\omega\}$ converges to the element $h=s(t)\in H$, where $\pi(h)=\pi(s(t))=t$. It is also clear that $\pi(x_n)=\pi(s(t_n))=t_n$, for 
each $n\in\omega$. Since $F=\pi^{-1}\pi(F)$, for each $F\in\gamma$, we see that 
$\{x_n: n\in\omega\}\subset\bigcup\gamma$. 

If $h=x$, we are done. So let $h\neq x$. It follows from $\pi(x)=t=\pi(h)$ that $g=h^{-1}x\in N=\ker\pi$. 
For every $n\in\omega$, we put $x_n'=x_n\cdot g$. Then the sequence $\{x_n': n\in\omega\}$ converges to $x=h\cdot g$. Let $n\in\omega$ be arbitrary and take $F\in\gamma$ such that 
$x_n\in F$. Then $x_n'=x_n\cdot g\in FN=\pi^{-1}\pi(F)=F$, whence it follows that the sequence 
$\{x_n': n\in\omega\}$ is covered by $\gamma$. This completes the proof of the theorem.
\end{proof}




\begin{thebibliography}{99}

\bibitem{Ar80} Alexander V.~Arhangel'skii,
\newblock Relations among the invariants of topological groups and their subspaces,
\newblock  \textit{Russian Math. Surveys} \textbf{35} (1980), no.~3, 1--23.
\newblock Russian original in: \textit{Uspekhy Mat. Nauk}  \textbf{35} (1980), no.~3, 3--22.

\bibitem{AT} Alexander V.~Arhangel'skii and Mikhail G.~Tkachenko,
\emph{Topological Groups and Related Structures}, 
\newblock Atlantis Series in Mathematics, Vol.~I, Atlantis Press and 
World Scientific, Paris--Amsterdam, 2008.

\bibitem{BCDT} Hugo Bello, Mar\'{\i}a Jes\'us~Chasco, Xabier~Dom\'{\i}nguez, 
Mikhail Tkachenko,
\newblock Splittings and cross-sections in topological groups, 
\newblock \textit{J. Math. Anal. Appl.} \textbf{435} (2016), no.~2, 1607--1622. 

\bibitem{CR} Wistar W.~Comfort and Kenneth A.~Ross,
\newblock Pseudocompactness and uniform continuity in topological groups,
\newblock \textit{Pacific J. Math.} \textbf{16} (1966), 483--496.

\bibitem{CS} Wistar W.~Comfort and T.~Soundararajan,
\newblock Pseudocompact group topologies and totally dense subgroups,
\newblock \textit{Pacific J. Math.} \textbf{100} (1982), no.~1, 61--84.

\bibitem{Eng} Richard Engelking,
\newblock On functions defined on Cartesian products,
\newblock \textit{Fund. Math.} \textbf{59} (1966), no.~2, 221--231.

\bibitem{Eng89} Richard Engelking, 
\newblock \textit{General Topology}, 
\newblock Heldermann Verlag, Berlin 1989.

\bibitem{Gli} Irwin~Glicksberg, 
\newblock Stone--\v{C}ech compactifications of products, 
\newblock \textit{Trans. Amer. Math. Soc.} \textbf{90} (1959), 369--382.

\bibitem{PROBOOK} Karl H.~Hofmann and Sidney A.~Morris, 
\newblock \emph{The Lie Theory of Connected  Pro-Lie Groups}, 
\newblock European Mathematical Society, Zurich, 2007.

\bibitem{HM2} Karl H.~Hofmann and Sidney A.~Morris, 
\newblock The structure of almost connected pro-Lie groups, 
\newblock \emph{J. Lie Theory} \textbf{21} (2011), 347--383.

\bibitem{HM3} Karl H.~Hofmann and Sidney A.~Morris, 
\newblock Pro-Lie groups: a survey with open problems, 
\newblock \emph{Axioms} \textbf{4} (2015), 294--312. 

\bibitem{RS} Evgenii Reznichenko and Ol'ga Sipacheva, 
\newblock The free topological group on the Sorgenfrey line is not 
$\mathbb{R}$-factorizable, 
\newblock \emph{Topology Appl.} \textbf{160} (2013), 1184--1187.

\bibitem{San_inf} Iv\'an S\'anchez, 
\newblock Dense subgroups of paratopological groups, 
\newblock \emph{Topology Appl.} \textbf{196} (2015), 241--248.
 
\bibitem{Tk81} Mikhail G.~Tkachenko, 
\newblock Some results on inverse spectra II, 
\newblock \textit{Comment. Math. Univ. Carolin.} \textbf{22} (1981), no.~4, 819--841.

\bibitem{Tk91} Mikhail G.~Tkachenko,
\newblock Factorization theorems for topological groups and their applications, 
\newblock \emph{Topology Appl.} \textbf{38} (1991), 21--37. 

\bibitem{Tk94} Mikhail G.~Tkachenko,
\newblock $M$-mappings and the cellularity of spaces, 
\newblock \textit{Topology Appl.} {\bf 57} (1994), no.~2-3, 183--207.  

\bibitem{Tk01} Mikhail G.~Tkachenko, 
\newblock Complete $\aleph_0$-bounded groups need not be $\R$-factorizable, 
\newblock \textit{Comment. Math. Univ. Carolin.} \textbf{42}(2001), no.~3, 551--559.

\bibitem{Tk15} Mikhail G.~Tkachenko, 
\newblock Lindel\"{o}f $\Sigma$-spaces and $\R$-Factorizable Paratopological Groups,
\newblock \emph{Axioms} \textbf{4} (2015), 254--267.

\bibitem{Usp} Vladimir V.~Uspenskij, 
\newblock Extensions of topological groups with a countable network, 
\newblock \textit{Moscow Univ. Math. Bull.} \textbf{39} (1984), no.~5, 84--85.

\end{thebibliography}
\end{document}